\documentclass{amsart}
\usepackage{amssymb,enumerate,bm,color,here,url,hyperref}

\usepackage{graphicx} 

\theoremstyle{plain}
	\newtheorem{thm}{Theorem}[section]
	\newtheorem{lem}[thm]{Lemma}
	\newtheorem{prop}[thm]{Proposition}

	\newtheorem{ex}[thm]{Example}
\theoremstyle{definition}
	\newtheorem{dfn}[thm]{Definition}
\theoremstyle{remark}
	\newtheorem{rem}[thm]{Remark}


\DeclareMathOperator{\Aut}{Aut}
\DeclareMathOperator{\Isom}{Isom}
\DeclareMathOperator{\lcm}{lcm}
\DeclareMathOperator{\Out}{Out}
\DeclareMathOperator{\vol}{vol}

\newcommand{\cut}{\setminus \! \! \setminus}

\newcommand{\bbZ}{\mathbb{Z}}
\newcommand{\bbQ}{\mathbb{Q}}
\newcommand{\bbR}{\mathbb{R}}

\begin{document}

\title{On the isotopies of tangles in periodic 3-manifolds using finite covers}

\author[Y. Kotorii]{Yuka Kotorii}
\address[Kotorii]{Mathematics Program, Graduate School of Advanced Science and Engineering, Hiroshima University, 1-7-1 Kagamiyama, Higashi-hiroshima, Hiroshima 739-8521 Japan}
\address[Kotorii, Mahmoudi, Matsumoto, Yoshida]{International Institute for Sustainability with Knotted Chiral Meta Matter (WPI-SKCM$^2$), Hiroshima University, 1-3-1 Kagamiyama, Higashi-Hiroshima, Hiroshima 739-8531, Japan}
\address[Kotorii, Mahmoudi]{RIKEN, interdisciplinary Theoretical and Mathematical Sciences Program, 2-1, Hirosawa, Wako, Saitama 351-0198, Japan}
\email{kotorii@hiroshima-u.ac.jp}

\author[S. Mahmoudi]{Sonia Mahmoudi}
\address[Mahmoudi]{Advanced Institute for Materials Research (WPI-AIMR), Tohoku University, 2-1-1 Katahira, Aoba-ku, Sendai, Miyagi 980-8577, Japan}
\email{sonia.mahmoudi@tohoku.ac.jp}

\author[E. A. Matsumoto]{Elisabetta A. Matsumoto}
\address[Matsumoto]{School of Physics, Georgia Institute of Technology, 837 State Street, Atlanta, GA 30332, U.S.A.}
\email{sabetta@gatech.edu}

\author[K. Yoshida]{Ken'ichi Yoshida}
\email{kncysd@hiroshima-u.ac.jp}

\subjclass[2020]{57K10, 57K30, 57K32, 57M10}
\keywords{periodic tangles, links in 3-manifolds, JSJ decomposition}
\date{}

\begin{abstract}
A periodic tangle is a one-dimensional submanifold in $\bbR^3$ that has translational symmetry in one, two, or three transverse directions. 
A periodic tangle can be seen as the universal cover of a link in the solid torus, the thickened torus, or the three-torus, respectively.
Our goal is to study equivalence relations of such periodic tangles.
Since all finite covers of a link lift to the same periodic tangle, it is necessary to prove that isotopies between different finite covers are preserved.
In this paper, we show that if two links have isotopic lifts in a common finite cover, then they are isotopic. 
To do so, we employ techniques from 3-manifold topology to study the complements of such links.
\end{abstract}

\maketitle

\section{Introduction}
\label{section:intro}

\emph{Periodic tangles} are complex entanglements of curves embedded in $\bbR^{3}$ that are periodically repeated in either one, two, or three directions. A periodic tangle can be described as the lift of a link in certain 3-manifolds to its universal cover. More specifically, a \emph{singly periodic \textup{(}SP\textup{)} tangle} can be seen as the lift of a link in the solid torus $S^{1} \times D^{2}$ into $\bbR \times D^{2} \subset \bbR^3$, as described in \cite{SZ25}. A \emph{doubly periodic \textup{(}DP\textup{)} tangle} can be defined as the lift of a link in the thickened torus $T^{2} \times I$ into $\bbR^{2} \times I$, where $I$ is a closed interval, as done in \cite{DLM26}. Finally, a \emph{triply periodic \textup{(}TP\textup{)} tangle} can be described as the lift of a link in the 3-torus $T^{3}$ into $\bbR^{3}$, as presented in \cite{AEM25}. 

Periodic tangles have been used to model a variety of entangled materials. In chemistry, such systems of entangled chains of filaments or catenanes appear in polymers \cite{Panagiotou15, PM18, PML13, Treacy2}, crystalline materials \cite{Treacy1, Evans}, and the mitochondrial DNA of certain protozoa \cite{Shapiro95, KSD20}. Periodic tangles also describe macroscopic materials, such as knitted and woven textiles \cite{GMO09a, GMO09b, MG09,MM20,FKM23,FKM26,Mahmoudi20}, whose physical properties can often be linked to their topology \cite{LOTY18,singal2024}. In all of these systems, the bulk material closely resembles the universal cover of some link in the solid torus, the thickened torus, or the 3-torus. Instead of dealing directly with periodic tangles in the universal cover $\bbR^{3}$, many researchers consider more minimal representations using links in 3-manifolds, called \emph{motifs}, that lift to the periodic tangle in question \cite{GMO09a, GMO09b, MG09,MM20}. Textiles, in particular, are represented by links in the thickened torus. When describing the same universal cover, two different researchers might choose different motifs to study. This can be practical from a scientific point of view, but it may prove to be confusing when trying to classify the topology of such tangles. For instance, some topological invariants may evaluate to different values, despite referring to the same system in the universal cover.

\smallbreak

The topological \emph{equivalence} of periodic tangles was first investigated by Grishanov, Meshkov, and Omelchenko in \cite{GMO07}, whose prime motivation was the study and classification of doubly periodic textile structures through a diagrammatic theory on an integer lattice
\footnote{In particular, one may consider a basis $B$ of $\mathbb{E}^2$ and a periodic lattice $\Lambda$ isomorphic to $\mathbb{Z}^2$ associated to a doubly periodic tangle diagram $D$. Thus, the quotient of $D$ under $\Lambda$ generates a motif diagram of a doubly periodic tangle. Such a motif is not unique, as infinitely many different point lattices or different choices of basis for the same lattice may be chosen to quotient $D$.}.
This approach was generalized to the study of doubly periodic tangles in \cite{DLM26}, where global isotopies are discretized on the diagrammatic level as sequences of moves on the corresponding motif diagrams. These include three types of isotopies: 
\begin{enumerate}
\item acting on the local level, 
\item the choice of a basis of the universal cover, and 
\item the number of periodic units in a fundamental domain. 
\end{enumerate}

These latter two notions, which extend to both singly periodic tangles \cite{SZ25} and triply periodic tangles \cite{AEM25}, highlight the main difference between isotopies of classical links embedded in a 3-manifold and those of periodic tangles. 
Note that all of these approaches consider a diagrammatic theory of periodic tangles, which has led to the construction of topological invariants such as several numerical, polynomial, and finite type invariants \cite{AEM25, BK20, DLM24, DLM26,  GMO07, GMO09a, GMO09b, GMV09, GV11, Kawauchi18, MG09, SZ25}.  

\smallbreak

Another interesting approach to understanding the topology of periodic tangles is to study the complement of links embedded in $S^{1} \times D^{2}$, $T^{2} \times I$, and $T^3$ to account for periodicity.
Given a link $L$ in a 3-manifold $X$, removing a tubular neighborhood of $L$ from $X$ results in a 3-manifold known as the \emph{link complement}, denoted $X \cut L$. A link complement can be systematically constructed from a diagram. Software such as \emph{SnapPy} does this by constructing a 3-manifold complement combinatorially, starting with the crossings in a diagram \cite{SnapPy}. As a result of the geometrization theorem, such 3-manifold complements can (after cutting along appropriate spheres and tori) be given metrics consistent with one of eight uniform geometric structures \cite{Thurston82, Perelman02, Perelman03a, Perelman03b}. It is important to note that most knot and link complements are equipped with \emph{hyperbolic structures}, which have constant sectional curvature $-1$. 

While a knot is completely defined by its complement \cite{GordonLuecke}, only the majority of the topology of a link is captured by its complement. (We will address the ambiguity arising here in our discussion of Dehn twists and covers of motifs.)
The properties of the complement 3-manifold can be used to characterize the topology of the knot or link.
These are particularly nice when the knot or link complement is a hyperbolic manifold, as invariants of the hyperbolic complement 3-manifold are also invariants of the knot or link.
The hyperbolic volume is a topological invariant of a knot that is not directly captured by knot diagrammatic techniques.
For example, two-periodic weft-knitted textiles were investigated using hyperbolic structures in \cite{KMMP25}. 
In addition, interesting examples of hyperbolic 3-manifolds have been generated by links in the thickened torus and the 3-torus, as investigated in several studies \cite{Adams20, CKP16, CKP19, Hui25, HP24, Kwon25, KPT23, KT22, LAT04, Yoshida18, Yoshida22}.

As with the diagrammatic picture, the three levels of isotopy that act on a DP tangle $L$ exist when studying the 3-manifold complement $T^2\times I \cut L$. 
\begin{enumerate}
\item \emph{Local isotopies} are realized by ambient isotopies of the manifold $T^2\times I \cut L$. 
\item To change the basis set of the universal cover, let us consider the structure of $T^2\times I$. The manifold $T^2\times I$ can be obtained by drilling the tubular neighborhood around a Hopf link in $S^3.$ The boundary generated by this surgery are identified with $T^2\times \{0\}$ and $T^2\times \{1\}$. A Dehn twist in the diagrammatic picture acts on the components of the Hopf link. This would change the frame on $T^2\times \{0\}$ and/or $T^2\times \{1\}$. However, that does not change the structure of the manifold $T^2\times I$. (This is the information we have lost between Dehn twists in the diagrammatic picture and the 3-manifold picture.) Thus, we find that isotopies between different basis choices on the universal cover are trivial in the 3-manifold picture. 
\item This paper demonstrates that \emph{scale equivalence} is an equivalence relation between finite covers. We will also show that this equivalence relation holds for singly periodic and triply periodic tangles.
\end{enumerate}

\subsection{Summary of results}
\label{subsection:summary}

In this paper, we aim to explore the scale equivalence relation of singly periodic, doubly periodic, and triply periodic tangles. We claim that if two links in $S^{1} \times D^{2}$, $T^{2} \times I$, or $T^{3}$ are isotopic, then their finite covers must also be isotopic. 
To do so, we introduce the notion of \emph{cover-isotopy} of links, which defines the existence of a finite covering space of the manifold such that the links, when lifted to this covering space, become isotopic. 
This leads to the following main theorems, whose proofs are presented in Section~\ref{section:isotopy}:

\begin{thm}[Theorems \ref{thm:1-isotopy}, \ref{thm:2-isotopy}, and \ref{thm:3-isotopy}]
\label{thm:main1}
Suppose that $X$ is the solid torus $S^{1} \times D^{2}$, the thickened torus $T^{2} \times I$, or the 3-torus $T^{3}$. 
Let $L_{0}$ and $L_{1}$ be links in $X$. 
Let $P \colon \widetilde{X} \to X$ be a finite covering map. 
Suppose that the lifts $\widetilde{L_{0}} = P^{-1}(L_{0})$ and $\widetilde{L_{1}} = P^{-1}(L_{1})$ are isotopic in $\widetilde{X}$. 
Then $L_{0}$ and $L_{1}$ are isotopic in $X$. 
\end{thm}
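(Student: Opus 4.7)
By the isotopy extension theorem, the hypothesis produces an orientation-preserving homeomorphism $\widetilde{\Phi}\colon \widetilde{X} \to \widetilde{X}$, isotopic to the identity, with $\widetilde{\Phi}(\widetilde{L_{0}}) = \widetilde{L_{1}}$. Let $G$ denote the (finite) deck transformation group of $P$, acting freely on $\widetilde{X}$ with $\widetilde{X}/G = X$ and preserving each $\widetilde{L_{i}}$. The heart of the proof is to replace $\widetilde{\Phi}$ by a $G$-equivariant homeomorphism $\widetilde{\Psi}$ still sending $\widetilde{L_{0}}$ to $\widetilde{L_{1}}$; such a $\widetilde{\Psi}$ descends to a self-homeomorphism $\Psi$ of $X$ with $\Psi(L_{0}) = L_{1}$.

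\textbf{Reduction to equivariance.} To finish, I need $\Psi$ to be isotopic to the identity on $X$ so that the descended isotopy carries $L_{0}$ to $L_{1}$. Since $\pi_{1}(X) \cong \bbZ^{n}$ (for $n = 1, 2, 3$), an endomorphism of $\pi_{1}(X)$ is determined by its restriction to any finite-index subgroup. Hence $\Psi_{*}$ on $\pi_{1}(X)$ equals the identity provided $\widetilde{\Psi}_{*}$ is the identity on $\pi_{1}(\widetilde{X})$, which can be arranged since $\widetilde{\Phi}$ already has that property. In each of the three cases, an orientation-preserving self-homeomorphism of $X$ that acts trivially on $\pi_{1}(X)$ and preserves each boundary component setwise is isotopic to the identity; preservation of boundary components descends directly from $\widetilde{\Phi}$ being isotopic to the identity in $\widetilde{X}$.

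\textbf{Producing the equivariant $\widetilde{\Psi}$.} My plan here is to use the JSJ decomposition of the complements $M_{i} := X \cut L_{i}$. The JSJ family of tori in $M_{i}$ pulls back to a $G$-invariant family of essential tori in $\widetilde{M_{i}} := \widetilde{X} \cut \widetilde{L_{i}}$ compatible with its JSJ family, and the restriction $\widetilde{\varphi}\colon \widetilde{M_{0}} \to \widetilde{M_{1}}$ of $\widetilde{\Phi}$ sends one family to the other up to isotopy. On each hyperbolic JSJ piece of $\widetilde{M_{i}}$, the complete hyperbolic structure is pulled back from $M_{i}$, and Mostow rigidity forces $\widetilde{\varphi}$ to be isotopic to an isometry that is automatically $G$-equivariant on that piece. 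On each Seifert fibered piece, uniqueness of the Seifert fibration up to isotopy (modulo a short list of classical small-complexity exceptions) plays the analogous role. Gluing these equivariant isotopies across the JSJ tori and across $\partial \widetilde{X}$ in cases (1) and (2) yields the sought $\widetilde{\Psi}$.

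\textbf{Main obstacle.} The most delicate step is the last one: making the piecewise rigid isotopies agree, equivariantly, on the common boundaries of adjacent JSJ pieces. Mostow rigidity makes the hyperbolic pieces essentially automatic; the real work is on Seifert pieces, where fiber-preserving isotopies are not pinned down by $\pi_{1}$-data alone. The plan is to use that $\widetilde{\Phi}$ induces the identity on $\pi_{1}(\widetilde{X})$ and preserves the peripheral structure (meridians of $\widetilde{L_{i}}$ and, where applicable, $\partial\widetilde{X}$) to ensure the equivariant modifications on adjacent pieces can be made compatible. I expect this is where the three cases $X = S^{1} \times D^{2}$, $T^{2} \times I$, $T^{3}$ require slightly separate analyses, matching the three individual theorems referred to in the statement.
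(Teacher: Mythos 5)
Your overall skeleton (JSJ decomposition, Mostow rigidity on hyperbolic pieces, promoting $\widetilde{\Phi}$ to a $G$-equivariant map, then descending) matches the paper's, but there are two genuine gaps, and the first one sits exactly where the real content of the theorem lies. You assert that on a hyperbolic JSJ piece "Mostow rigidity forces $\widetilde{\varphi}$ to be isotopic to an isometry that is automatically $G$-equivariant." This is false. Mostow rigidity gives an isometry $\tilde{\iota}$, and hence an isometric action $\tilde{\iota}_{*}(\alpha_{0})$ on $\widetilde{M_{1}}$ that is \emph{conjugate} to $\alpha_{1}$ by a homeomorphism isotopic to the identity; there is no a priori reason these two isometric actions are \emph{equal}, and if they are not, $\tilde{\iota}$ does not descend. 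Indeed, the same "automatic" argument would prove cover-isotopy implies isotopy for hyperbolic links in lens spaces and in the thickened Klein bottle, where it is known to fail (Manfredi, and the examples in Section 4 of the paper). The paper closes this gap with a separate argument: for $X = S^{1} \times D^{2}$ and $T^{2} \times I$ it arranges the isotopy to fix a horocusp at $\partial\widetilde{X}$ and invokes a rotation-set lemma (two translations of a Euclidean torus that are conjugate by a homeomorphism isotopic to the identity must coincide); for $X = T^{3}$ it needs the Meeks--Scott theorem to produce an invariant Euclidean structure on $\widetilde{X}$ for the group generated by both actions before the same rotation-set argument applies. Without some such input your construction of $\widetilde{\Psi}$ does not get off the ground.

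The second gap is in your descent step: you claim an orientation-preserving self-homeomorphism of $X$ acting trivially on $\pi_{1}(X)$ and preserving boundary components setwise is isotopic to the identity. For $X = S^{1} \times D^{2}$ this is false: the Dehn twist along a meridian disk has all these properties but generates a $\bbZ$ summand of $\pi_{0}(\Aut(S^{1} \times D^{2}))$. The paper kills this twist by observing that its parameter is multiplied by the covering degree under lifting, so a lift isotopic to the identity forces the parameter to vanish. Finally, your treatment of Seifert pieces via "uniqueness of the Seifert fibration" is too thin: the paper instead classifies the Seifert-fibered links explicitly as $T_{0}(p,q)$, $T_{1}(p,q)$, $T_{2}(p,q)$, $T_{3}(p,q,r)$ and verifies by hand (using component counts, homology classes, and torus-knot classification) that isotopic lifts in a \emph{fixed} cover force equality of parameters; note that homeomorphic lifts do not suffice, as $T_{0}(2,0)$ and $T_{0}(2,1)$ show. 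You also omit the split-link reduction via prime decomposition, though that part is routine.
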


The statement of Theorem~\ref{thm:main1} is surprisingly non-trivial. On one hand, it seems fairly obvious that if two links $\widetilde{L_0}$ and $\widetilde{L_1}$ in $\widetilde{X}$ are isotopic, their images $L_0= P(\widetilde{L_0})$ and $L_1= P(\widetilde{L_1})$ should be isotopic. However, going the other way introduces several complexities. For example, studies in other three-manifolds, including lens spaces and the thickened Klein bottle, have shown that cover-isotopy does not imply isotopy in general \cite{Manfredi14,Yoshida25a}.

Two braids $\alpha$ and $\beta$ have isotopic closures in the solid torus if and only if $\alpha$ and $\beta$ are conjugate in the braid group $B_{k}$ \cite[Theorem 2.1]{KT08}. 
Hence Theorem~\ref{thm:main1} implies the following result of Gonzalez-Meneses \cite{Gonzalez03}: 
If $\alpha$ and $\beta$ in $B_{k}$ satisfy that $\alpha^{n} = \beta^{n}$ for some $n > 0$, then $\alpha$ and $\beta$ are conjugate. 
The proof in \cite{Gonzalez03} relies on the Nielsen-Thurston classification of braids, 
and so it seems similar to the proof of Theorem~\ref{thm:main1}. 

\smallbreak

In the theory of periodic tangles, one of the most challenging problems is the identification of a \emph{minimal motif}, as discussed in \cite{GMO07} for the case of doubly periodic tangles. A \emph{motif} is the quotient of a periodic tangle under the generators of its translational symmetries. A \emph{minimal motif} is a motif that cannot be a finite cover of another motif. 
A link is called \emph{split} if it has a 2-sphere in its complement separating one or more link components from the others. 
While this is defined for links in our three periodic 3-manifolds, we extend this definition to tangles in the universal cover of these manifolds. 
If a link $L$ in $S^{1} \times D^{2}$, $T^{2} \times I$, and $T^3$ is split, we say that the corresponding periodic tangle $L_\infty$ is also split. 
Note that doubly periodic tangles arising from typical textile structures are not split. 
In Section~\ref{section:minimal}, we prove the following theorem:

\begin{thm}[Theorem \ref{thm:minimal-motif}]
\label{thm:main2}
Let $L_{\infty}$ be a non-split periodic tangle with a motif $L \subset X = S^{1} \times D^{2}$, $T^{2} \times I$, or $T^3$. 
Suppose that each Seifert fibered JSJ piece $M$ of the complement of $L$ such that the homomorphism $\iota_{*} \colon H_{1}(M, \bbZ) \to H_{1}(X, \bbZ)$ induced by the inclusion map is non-trivial can be re-embedded as the complement of $T_{0}(p,0)$, $T_{1}(0,q)$, $T_{2}(p,q)$, or $T_{3}(p,q,r)$. 
Then $L_{\infty}$ has a motif $L_{\min}$ such that for any motif $L'$ of $L_{\infty}$, there exists a finite covering map $L' \to L_{\min}$. 
In particular, $L_{\infty}$ has a minimal motif unique up to homeomorphism. 
\end{thm}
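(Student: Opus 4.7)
My plan is to construct $L_{\min}$ as the quotient $L_\infty / G$, where $G$ is the group of all translations of $\widetilde{X}$ preserving $L_\infty$, and to use the JSJ decomposition of $X \cut L$ together with the re-embedding hypothesis to prove (i) that $G$ is a discrete lattice of the same rank as the motif lattice $\Lambda$ corresponding to $L$, and (ii) that the quotient $\widetilde{X} / G$ is again of the form $S^{1}\times D^{2}$, $T^{2}\times I$, or $T^{3}$. Granting these, for any motif $L' = L_\infty / \Lambda'$ the lattice $\Lambda'$ is contained in $G$ (since every translation in $\Lambda'$ preserves $L_\infty$), so the natural quotient map $L' \to L_{\min}$ is a finite covering; uniqueness of $L_{\min}$ up to homeomorphism is then immediate, because $G$ is defined intrinsically from $L_\infty$. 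Theorem~\ref{thm:main1} is used implicitly here: a motif is determined by its sublattice inside $G$ up to isotopy, not merely up to cover-equivalence.

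First I would lift the JSJ decomposition of $X \cut L$ to a $G$-equivariant JSJ decomposition of the complement of $L_\infty$ in $\widetilde{X}$; non-splitness of $L_\infty$ guarantees irreducibility in the cover, so the JSJ pieces in $\widetilde{X}$ are exactly the lifts of those in $X$. The action of $G$ permutes the lifted pieces, and the heart of the argument is to analyze the stabilizer $G_{M}$ of each piece $M$ and the way the stabilizers interact across adjacent JSJ tori. For a hyperbolic piece, Mostow rigidity bounds $G_{M}$ by the finite isometry group of $M$, so since $G$ is torsion-free, $G_{M}$ is trivial. For a Seifert piece with $\iota_{*}\colon H_{1}(M,\bbZ) \to H_{1}(X,\bbZ)$ trivial, the piece lifts to a bounded region of $\widetilde{X}$ and the same conclusion holds. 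The only source of a non-trivial $G_{M}$ is a Seifert piece with $\iota_{*}\ne 0$, and by hypothesis each such piece is re-embedded as the complement of a standard $T_{0}(p,0)$, $T_{1}(0,q)$, $T_{2}(p,q)$, or $T_{3}(p,q,r)$, whose translational self-symmetries in $\widetilde{X}$ form an explicit discrete lattice whose rank matches the number of periodic directions of $X$.

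Combining the local stabilizer computations across the JSJ graph yields the candidate $G$; the explicit nature of the $T_{i}$ models forces $G$ to be discrete and of full rank, and makes $\widetilde{X}/G$ inherit a product structure of the required type, so $L_{\min}:=L_\infty/G$ is a bona fide motif with the asserted universal property. The main obstacle will be establishing compatibility of the piecewise stabilizer data across the JSJ gluings: one must check that the fiber slopes of adjacent standard Seifert pieces agree along their common torus, so that locally-defined symmetries assemble into a global action on $L_\infty$. This is where the re-embedding hypothesis is essential, since without it a Seifert piece with non-trivial $\iota_{*}$ could in principle contribute a continuous fiber-direction symmetry that does not extend to a global translation of $\widetilde{X}$, leading to a non-discrete $G$ and the failure of a minimal motif to exist; the standard $T_{i}$ models anchor the fiber direction to the periodic directions of $\widetilde{X}$ and rule this out.
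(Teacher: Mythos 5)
Your top-level strategy differs from the paper's: you try to build $L_{\min}$ in one step as $L_{\infty}/G$ for $G$ the full group of translations of $X_{\infty}$ preserving $L_{\infty}$, whereas the paper proves that the preorder of motifs is directed (Lemma~\ref{lem:small-motif}: any two motifs admit a common quotient motif) and separately that covering degrees out of a fixed motif are bounded (Lemma~\ref{lem:self-cover}), then combines the two. The gap in your version is that $G$ is not well defined, and the containment $\Lambda' \subset G$ on which your universal property rests is exactly the hard point, not a triviality. A motif $L'$ of $L_{\infty}$ is by definition a link whose lift is only \emph{equivalent} to $L_{\infty}$ (up to periodic isotopy and affine maps isotopic to the identity); its lattice $\Lambda'$ preserves some representative of the equivalence class, not a fixed $L_{\infty}$, and the translation symmetry group of a periodic tangle genuinely changes under periodic isotopy (the paper flags this after Definition~\ref{def:minimal-motif}). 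So to assert that ``every translation in $\Lambda'$ preserves $L_{\infty}$'' you must first show that the lattices of all motifs can be simultaneously rigidified as honest translations acting on a single representative. That is precisely the content of the paper's Lemmas~\ref{lem:hyp-motif} and \ref{lem:seifert-motif}: on each hyperbolic JSJ piece one uses the Mostow rigidity theorem together with the boundary-translation lemmas (Lemmas~\ref{lem:2-hyp-free} and \ref{lem:1-hyp-free}) to show that the group generated by two deck transformation groups still acts freely and by boundary translations; on the standard Seifert pieces one normalizes to the explicit $T_{i}$ models; and one then checks compatibility of these locally straightened actions across the JSJ tori (including the component-permutation issue in assertion (5) of Lemma~\ref{lem:seifert-motif}). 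None of this is carried out in your proposal, and your stabilizer analysis is also off: for a hyperbolic JSJ piece with $\iota_{*} \neq 0$ the lift to $X_{\infty}$ is a single infinite-volume piece whose stabilizer in $G$ is a full-rank lattice, not trivial, and finite-volume Mostow rigidity does not apply to it directly.

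The example of $T_{0}(3,1)$ and $T_{1}(2,1)$ in the solid torus shows concretely what fails if one treats $\Lambda' \subset G$ as automatic: these are two motifs of one singly periodic tangle (since $T_{0}(3,3) \simeq T_{1}(2,2)$), their lattices have indices $3$ and $2$ in a common cover, so your $G$ would contain an index-$6$ lattice and produce a common quotient motif --- which does not exist, as both links are minimal. The hypothesis on Seifert pieces excludes this case, but your argument must show explicitly where that hypothesis prevents the generated symmetry group from being too large or from failing to act by translations; appealing to a ``continuous fiber-direction symmetry'' does not address this finite-order obstruction.
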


The notation for Seifert JSJ pieces and re-embedding will be explained in Section~\ref{section:jsj}. 
The assumption of Theorem~\ref{thm:main2} holds in the case that $L_{\infty}$ is hyperbolic (i.e. it has a motif whose complement admits a hyperbolic structure). 
By Theorem~\ref{thm:main2}, it is sufficient to consider isotopy of minimal motifs for the classification of some types of periodic tangles. 
However, it is not easy in general to confirm the minimality of a motif. 
Nonetheless, the symmetry of a hyperbolic periodic tangle can be computed by SnapPy \cite{SnapPy}, unless it is too complicated. 

To prove our main theorems, we use the standard decomposition of the complement of a link: the prime decomposition and the JSJ decomposition. Each piece after the decomposition is Seifert fibered or hyperbolic. We will classify the types of JSJ tori and the Seifert fibered pieces, and apply the Mostow rigidity theorem to the hyperbolic pieces. 
Then we will use the result of Budney \cite{Budney06} for the JSJ decomposition of the complement of a link in $S^{3}$. 
Because the JSJ decomposition often induces links in $X=S^{1} \times D^{2}$, 
we shall use these results to prove cover-isotopy for $X = T^{2} \times I$ or $T^{3}$ in full generality. 
We believe that this novel approach will help develop new invariants of periodic tangles and find applications in other fields, including textiles, polymers, molecular chemistry, and k-DNA, amongst others.

This paper is organized as follows: 
first, in Section~\ref{section:prelim}, we recall essential notions for 3-manifolds and links. 
Next, in Section~\ref{section:jsj}, we describe the JSJ decomposition of the complements of links in the solid torus, the thickened torus, and the 3-torus. Then, in Section~\ref{section:isotopy}, we consider the cover-isotopy of links, and prove Theorem~\ref{thm:main1}. Finally, in Section~\ref{section:minimal}, we consider minimal motifs of periodic tangles, and prove Theorem~\ref{thm:main2}.

\section{Preliminaries}
\label{section:prelim}

In this section, we recall the definitions of links in 3-manifolds and their isotopies. 
We also recall the notions of prime decomposition, JSJ decomposition, Seifert fibrations, and hyperbolic 3-manifolds that are used to prove our main results. 
Note that within this paper, every 3-manifold that we consider is assumed to be connected, orientable, compact, and possibly possesses boundary unless otherwise stated.

\subsection{Links in 3-manifolds, periodic tangles, and minimal motifs}

Let $X$ be a connected 3-manifold, possibly with boundary. 
A \emph{link} $L$ in $X$ is a closed 1-submanifold embedded in the interior of $X$. 
We consider non-oriented, smooth (or piecewise linear) links. 
Two links $L_{0}$ and $L_{1}$ in $X$ are said to be \emph{isotopic} if there exists an \emph{ambient isotopy} between $L_{0}$ and $L_{1}$. 
This means that there exists a continuous map $F \colon X \times [0,1] \to X$ such that $F(\cdot, t) \colon X \to X$ is a homeomorphism for every $t$, where $F(\cdot, 0)$ is the identity map, and $F(L_{0}, 1) = L_{1}$. 
Thus, isotopy is an equivalence relation for links in a connected 3-manifold. 

In this paper, we consider links in the following 3-manifolds: $X = S^{1} \times D^{2}$, $T^{2} \times I$, or $T^{3}$, where $I=[0,1]$ is a closed interval.
We are interested in studying the lift of these links to the universal cover of $X$, that is, $\bbR \times D^{2}$, $\bbR^{2} \times I$, or $\bbR^3$. In the universal cover, these links are lifted to tangles with translational periodicity, denoted $L_\infty$ and called \emph{periodic tangles}.
Conversely, consider the map $P \colon \widetilde{X} \rightarrow X$ to be a finite covering map. 
Although $\widetilde{X}$ is homeomorphic to $X$, 
we distinguish $\widetilde{X}$ from $X$ to avoid confusion.
This map can be extended to define finite covers of links in $X$. We say that $\widetilde{L}$ is a \emph{finite cover} of $L$, if there exists a map $P \colon (\widetilde{X}, \widetilde{L})\rightarrow (X,L)$, where $P$ is a finite covering map in the usual sense and $\widetilde{L} = P^{-1}(L)$.

\smallbreak

We now define the notion of equivalence of periodic tangles as follows:

\begin{dfn}(\cite{DLM26})
\label{def:equivalence}
Two periodic tangles are said to be \emph{equivalent} if they can be obtained from each other by periodic isotopies and invertible affine transformations of the space carrying along the periodic tangles that are isotopic to the identity.
\end{dfn}

\begin{rem}
Periodic isotopies are isotopies in the usual sense that preserve periodicity in the universal cover of $X$. Thus it also applies to any finite covering map. Non-periodic local isotopies are not considered in this study.
Note that periodic lattices are not fixed in this definition. 
In other words, it is sufficient to preserve a large lattice. 
\end{rem}

Likewise, to see how isotopies of periodic tangles in the universal cover $X_\infty$ affect the corresponding links in our 3-manifolds $X$, we translate the notion of `equivalence' of periodic tangles to quotient maps of $X_\infty$. We define this equivalence as follows:

\begin{dfn}
\label{def:motif_equivalence}
A \emph{motif} of a periodic tangle $L_{\infty}$ is a link in $X$ whose lift in the universal cover $X_\infty$ is equivalent to $L_{\infty}$. 
\end{dfn}

If two periodic tangles $L_{\infty}$ and $L'_{\infty}$ are equivalent and $L$ is a motif of $L_{\infty}$, then we say that $L$ is also a motif of $L'_{\infty}$. A consequence of Definition~\ref{def:motif_equivalence} is that any finite cover of a motif of $L_{\infty}$ is also a motif of $L_{\infty}$. Thus, finite covers of motifs are equivalent under this definition. One might now ask what other types of motifs give rise to equivalent periodic tangles.
Just as ambient isotopies between links define equivalence, we can also study homeomorphisms of $X\rightarrow X$. This can include trivial diffeomorphisms and homeomorphisms that correspond to Reidemeister moves, but for these periodic manifolds, we have an additional class of homeomorphisms. In doubly-periodic tangles, it has been shown that Dehn twists are homeomorphisms of $T^{2} \times I \to T^{2} \times I$ that give rise to equivalent periodic tangles \cite{GMO07,DLM26}. This can be extended to Dehn twists in $S^{1} \times D^{2}$ \cite{SZ25} and $T^3$ \cite{AEM25}.

Next, we introduce the notion of \emph{admissible} homeomorphism.
More precisely, a homeomorphism $h \colon X \to X$ for $X = S^{1} \times D^{2}$, $T^{2} \times I$, or $T^3$ is called \emph{admissible} if $h$ satisfies the following: 
\begin{enumerate}
\item The homeomorphism $h$ preserves the orientation of $X$. 
\item If $X = S^{1} \times D^{2}$, the homeomorphism $h$ is isotopic to the identity. 
\item If $X = T^{2} \times I$, the homeomorphism $h$ preserves the boundary components (i.e. the front and back). 
\end{enumerate}

We now have the tools to  translate the notion of equivalence between periodic tangles to the language of homeomorphisms of their motifs. Two periodic tangles are equivalent if and only if two motifs of theirs are related by admissible homeomorphisms. 
If $X = T^{2} \times I$, the admissible homeomorphisms include the classical equivalence relation of links up to isotopy in $X$ and up to Dehn twists of $X$, as well as the equivalence between two finite covers of the same motif. This leads us to the following proposition:

\begin{prop} 
Let $L_{0, \infty}$ and $L_{1, \infty}$ be two periodic tangles and $L_{0}$ and $L_{1}$ be two of their motifs, respectively, in $X$. Then $L_{0, \infty}$ and $L_{1, \infty}$ are equivalent if and only if there exist finite covering maps $P_{0}, P_{1} \colon X \to X$ such that there is an admissible homeomorphism $h \colon (X, P_{0}^{-1}(L_{0})) \to (X, P_{1}^{-1}(L_{1}))$. 
\end{prop}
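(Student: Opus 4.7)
The plan is to interpret both sides of the biconditional at the level of the universal cover $X_{\infty}$ and match them via a common finite-index sublattice. For the forward direction, I would assume $L_{0,\infty}$ and $L_{1,\infty}$ are equivalent, so by Definition~\ref{def:equivalence} there exist a periodic isotopy $F$ and an invertible affine transformation $A$ of $X_{\infty}$ isotopic to the identity with $L_{1,\infty} = F(A(L_{0,\infty}))$. The first step is to construct a finite-index sublattice $\Lambda$ of the deck-transformation lattice $\bbZ^{n}$ of $X_{\infty} \to X$ which (i) is contained in the period lattices of both motifs and (ii) is $A$-invariant. Existence follows because $A$ sends one rational lattice to another, so its linear part is rational and it therefore preserves some finite-index sublattice of $\bbZ^{n}$, which one then intersects with the two motif lattices. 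Taking $P_{0} = P_{1}$ to be the covering $X_{\infty}/\Lambda \to X$, both $L_{0}$ and $L_{1}$ pull back to links in $X_{\infty}/\Lambda$, and the affine map $A$ descends to a homeomorphism $h_{A}$ matching these pullbacks; composing with the descent of $F$ yields the desired $h$, whose admissibility is read off from the orientation, boundary-preservation, and isotopy class of $A$ and $F$ in $X_{\infty}$.

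For the backward direction, I would lift the admissible $h$ to a homeomorphism $\widetilde h \colon X_{\infty} \to X_{\infty}$. Because $P_{i}^{-1}(L_{i})$ lifts to exactly $L_{i,\infty}$, the map $\widetilde h$ sends $L_{0,\infty}$ to $L_{1,\infty}$, and the task is to decompose $\widetilde h$ as a periodic isotopy composed with an affine transformation isotopic to the identity. I would handle this case by case. For $X = S^{1} \times D^{2}$, admissibility forces $h$ to be isotopic to the identity, and the equivariant lift of this isotopy moves $\widetilde h$ to a deck translation, which is affine and isotopic to the identity. For $X = T^{3}$, the orientation-preserving mapping class group is $SL(3,\bbZ)$, so $h$ is isotopic to an affine automorphism whose lift lies in $SL(3,\bbZ) \subset GL^{+}(3,\bbR)$, a path-connected group, and hence isotopic to the identity. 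For $X = T^{2} \times I$, boundary-preserving orientation-preserving homeomorphisms are, up to isotopy, products of Dehn twists along parallel tori $T^{2} \times \{t\}$, which lift to affine shears of $\bbR^{2} \times I$ isotopic to the identity through linear maps.

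The main obstacle will be the construction of the $A$-invariant sublattice $\Lambda$ in the forward direction, together with the subsequent verification that the descended homeomorphism satisfies all three admissibility conditions simultaneously. Rationality of $A$'s linear part is the crucial input enabling the descent; without it, $A$ would not descend to any finite cover. Once $\Lambda$ is chosen, orientation and boundary-preservation propagate routinely, but the isotopy-to-identity condition for $X = S^{1} \times D^{2}$ is delicate, since an $A$ isotopic to the identity in the universal cover need not descend to a map isotopic to the identity on the finite cover. Showing that the extra flexibility given by $F$ (and the structure of $\Lambda$-equivariant isotopies) is enough to repair this is the technical heart of the argument. The backward direction is comparatively structural, reducing to standard mapping class group computations for $S^{1} \times D^{2}$, $T^{2} \times I$, and $T^{3}$.
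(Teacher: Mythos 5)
Your ``backward'' direction (from the admissible homeomorphism of finite covers to equivalence of the tangles) is fine and matches the paper's one-line argument, just with the mapping class group bookkeeping made explicit. The problem is in the other direction, and it is a genuine gap rather than a detail.

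Your construction of the common sublattice $\Lambda$ rests on the claim that the affine transformation $A$ ``sends one rational lattice to another, so its linear part is rational.'' Nothing in Definition~\ref{def:equivalence} forces this: $A$ is an arbitrary invertible affine transformation isotopic to the identity, and the remark following the definition explicitly says that periodic lattices are not fixed --- a periodic isotopy only needs to preserve \emph{some} large lattice. Consequently the period lattices $\Lambda_{0}$ and $\Lambda_{1}$ of the two motifs, viewed inside the same copy of $\bbR^{d}$ after normalizing by $A$, need not be commensurable, and when they are not, no finite-index sublattice of $\bbZ^{d}$ is contained in both. Your argument simply has no $\Lambda$ to work with in that case, so the covering maps $P_{0},P_{1}$ cannot be produced. (A concrete instance: a tangle consisting of parallel straight lines is invariant under irrational translations, so an $A$ with irrational linear part can occur.)

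The paper closes exactly this hole: when $\Lambda_{0}$ and $\Lambda_{1}$ are not both rational, the closure of the group generated by $\bbZ^{d}$ and $\Lambda_{i}$ contains a $1$-dimensional subspace $V$, the tangle is invariant under the continuous translation action of $V$, hence consists of parallel straight lines, and for such degenerate tangles one exhibits the covering maps and the admissible homeomorphism directly (both pullbacks are isotopic to parallel closed geodesics with the same number of components). You would need to add this dichotomy --- commensurable lattices versus the parallel-lines case --- for your forward direction to go through. A secondary, smaller issue: you cannot in general take $P_{0}=P_{1}$, since the two inclusions $\widetilde{\Lambda}\subset\Lambda_{0}$ and $\widetilde{\Lambda}\subset\Lambda_{1}$ give coverings of different degrees; the statement only asks for two (possibly different) covering maps.
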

\begin{proof}
Suppose that there exist finite covering maps $P_{0}, P_{1} \colon X \to X$ such that there is an admissible homeomorphism $h \colon (X, P_{0}^{-1}(L_{0})) \to (X, P_{1}^{-1}(L_{1}))$. 
Then $h$ lifts to a homeomorphism $h_{\infty} \colon (X_{\infty}, L_{0, \infty}) \to (X_{\infty}, L_{1, \infty})$ that is equivariant under the translational periodicity and isotopic to the identity. 
Hence $L_{0, \infty}$ and $L_{1, \infty}$ are equivalent. 

Conversely, suppose that $L_{0, \infty}$ and $L_{1, \infty}$ are equivalent. 
Then, by taking composition with an affine transformation, we may assume that there is a homeomorphism $h_{\infty} \colon (X_{\infty}, L_{0, \infty}) \to (X_{\infty}, L_{1, \infty})$ that is equivariant under the translational periodicity of $\bbZ^{d}$ ($d = 1$, $2$, or $3$) acting on $X_{\infty}$ and isotopic to the identity. 
Here $(X, L_{i})$ for each $i = 0,1$ is the quotient of $(X_{\infty}, L_{i, \infty})$ by a lattice $\Lambda_{i} \subset \bbR^{d}$. 
If $\Lambda_{i} \subset \bbQ^{d}$ for both $i = 0,1$, there is a lattice $\widetilde{\Lambda} \subset \bbZ^{d}$ such that $\widetilde{\Lambda} \subset \Lambda_{0}$ and $\widetilde{\Lambda} \subset \Lambda_{1}$. 
 Let $P_{i} \colon X \to X$ denote the finite covering maps that correspond to $\widetilde{\Lambda} \subset \Lambda_{i}$. 
Then $h_{\infty}$ induces an admissible homeomorphism $h \colon (X, P_{0}^{-1}(L_{0})) \to (X, P_{1}^{-1}(L_{1}))$. 

Otherwise the closure of the subgroup of $\bbR^{d}$ generated by $\bbZ^{d}$ and $\Lambda_{i}$ for $i = 0$ or $1$ contains a 1-dimensional subspace $V$. 
Since $(X_{\infty}, L_{i, \infty})$ is invariant under the action of $V$, 
the periodic tangle $L_{i, \infty}$ consists of parallel straight lines in $X_{\infty}$. 
In this case, there are finite covering maps $P_{i} \colon X \to X$ such that $P_{0}^{-1}(L_{0})$ and $P_{1}^{-1}(L_{1})$ are isotopic to parallel closed geodesics in $X$ of the same component number. 
Hence again there is an admissible homeomorphism $h \colon (X, P_{0}^{-1}(L_{0})) \to (X, P_{1}^{-1}(L_{1}))$. 
\end{proof}

\begin{rem}
\label{rem:diagram}
In \cite{DLM26}, the doubly periodic tangle equivalence was investigated by considering motifs along with their supporting lattices. 
A generalized Reidemeister theorem for doubly periodic tangles translates ambient isotopies of doubly periodic tangles in $\bbR^{2} \times I$ to the diagrammatic picture of motifs. It proves that two doubly periodic tangles are equivalent if and only if two motif diagrams are related by a finite sequence of moves, which includes 
\begin{enumerate}
\item parallel shifts of the longitude-meridian pair of the torus, 
\item local isotopy moves, 
\item Dehn twists (the action of $SL(2,\bbZ)$ on $T^{2}$), and 
\item finite covering maps (called scale equivalence). 
\end{enumerate}
\end{rem}

\begin{ex}
\label{ex:motif}
Figure~\ref{fig:diagram} illustrates examples of the equivalence relations listed in Remark~\ref{rem:diagram}.
On the top left, a doubly periodic (DP) diagram and its quotient under a periodic lattice, that is, a link diagram in the (flat) torus. On the top right, two motif diagrams of the same DP diagram related by Reidemeister moves, as well as their corresponding finite covers, such that periodicity is preserved. At the level of the corresponding link in the thickened torus, this equivalence relation between finite covers is one of the main results of this paper. 
On the bottom, equivalent motifs of the same DP diagram satisfying the following relations: 
\begin{itemize}
\item (a) is a double cover of (b), 
\item (b) and (c) are related by Dehn twists, 
\item (c) is a double cover or (d), and 
\item (d) and (e) are related by Dehn twists. 
\end{itemize}
These equivalence relations are also considered from a 3-manifold perspective and highlight the open problem of finding a `minimal' motif, which is also addressed in the last section of this paper. 
\end{ex}

\begin{figure}[ht]
\centerline{\includegraphics[width=5in]{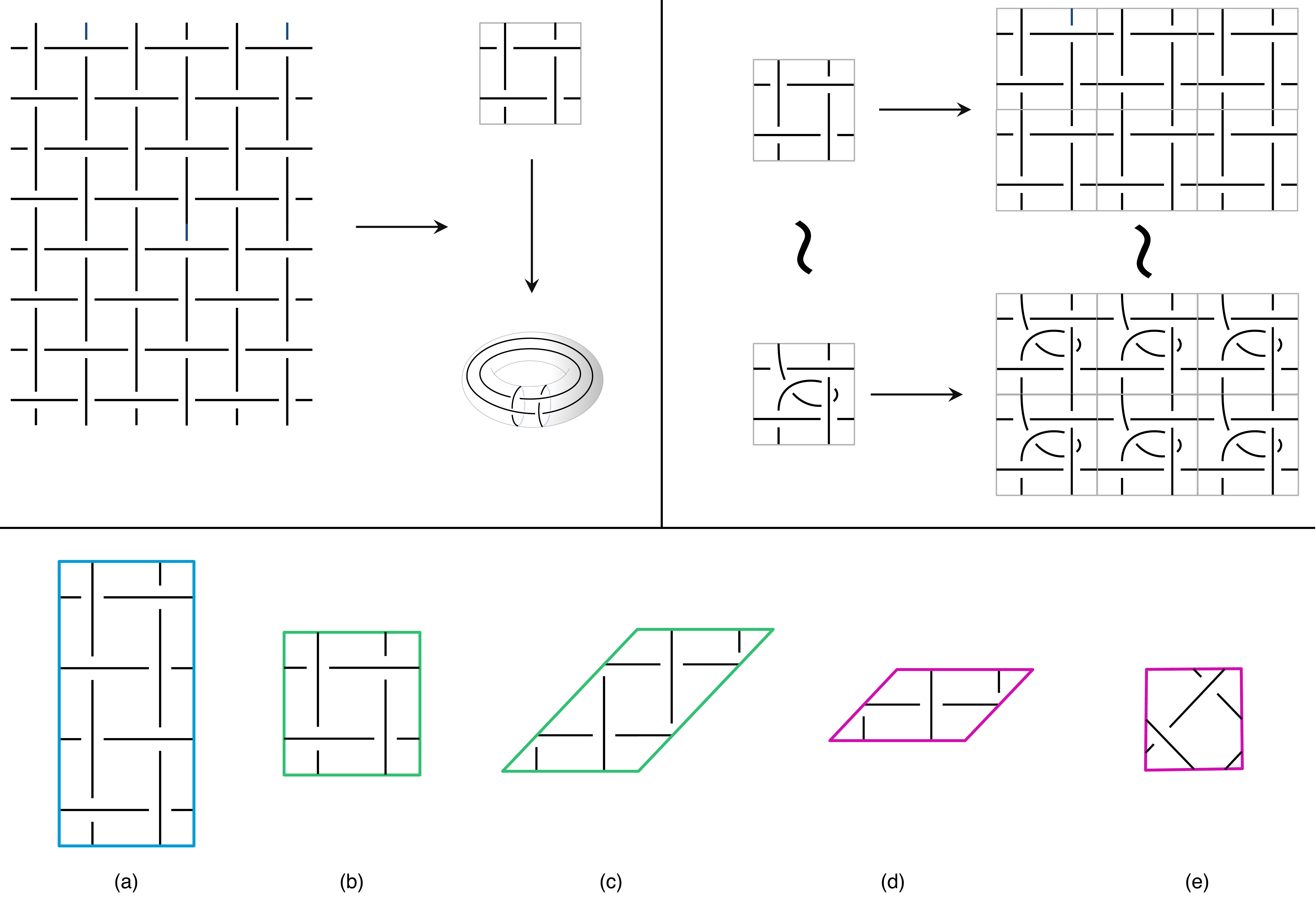}}
\vspace*{8pt}
\caption{\label{fig:diagram} 
On the top left, a doubly periodic (DP) diagram and a corresponding (flat) motif diagram. On the top right, motif diagrams and their finite covers are related by Reidemeister moves. On the bottom, motifs of the same DP diagrams related by finite covers and Dehn twists.}
\end{figure}

Given an equivalence relation for periodic tangles, a natural extension of classical knot theory is to consider classifications of periodic tangles. However, many existing topological invariants used to study classical link and knot equivalence do not yield invariant results when two non-isotopic motifs are compared. For example, the hyperbolic volumes of a motif and a finite cover of that motif will not be the same. Yet both motifs lead to equivalent periodic tangles. To address this, we introduce the definition of a \emph{minimal motif}. We will use this definition to prove the main result of Section~\ref{section:minimal}.

\begin{dfn}
\label{def:minimal-motif}
A motif $L$ of a periodic tangle $L_{\infty}$ is said to be \emph{minimal} if it is not a non-trivial finite cover of another motif. 
In other words, $L$ is minimal if it satisfies the following property. Suppose that $L'$ is a link in $X$ that is equivalent to $L$, then every finite covering map $(X,L) \to (X,L')$ is a homeomorphism.
\end{dfn}

In \cite{DLM26}, another definition of a minimal motif is given in terms of the associated periodic lattice.
In our definition, we have to be aware that a periodic tangle may become more symmetric by a periodic isotopy. 
The existence and uniqueness of a minimal motif are non-trivial questions, 
which we will discuss in Section~\ref{section:minimal}. 
Note that seeking systematic ways to find a minimal motif is still an open problem. 

\begin{rem}
In Example \ref{ex:motif}, the motif (e) is minimal. 
The hyperbolic structures of the complements of the motifs (a)-(e) are obtained by gluing regular ideal octahedra (see \cite{CKP16} for details). 
In particular, the complement of (e) can be decomposed into two regular ideal octahedra, and its volume is $2v_{\mathrm{oct}}$, where $v_{\mathrm{oct}} = 3.66...$ is the volume of a regular ideal octahedron. 
Since the complement of every motif of a hyperbolic doubly periodic tangle has at least three cusps, 
the volume is more than $v_{\mathrm{oct}}$ by \cite{Agol10}. 
Hence the complement of (e) cannot be a non-trivial finite cover of any other motif. 
\end{rem}

\subsection{Complements of links in 3-manifolds}

We will investigate links using 3-manifold theory. 
In this subsection, we introduce terminology we will need going forward. 
For a link $L$ in a compact 3-manifold $X$, we define the \emph{complement} $X \cut L$ of $L$ as the compact 3-manifold obtained by removing an open tubular neighborhood $N(L)$ of $L$ from $X$.

To describe group actions on the complement $X \cut L$, 
we will consider its decomposition. 
It is well known that an oriented compact 3-manifold admits a unique decomposition by the prime decomposition and the JSJ decomposition. 
The existence of the prime decomposition was proven by Kneser \cite{Kneser29}, 
and its uniqueness was proven by Milnor \cite{Milnor62}. 
The JSJ decomposition is due to Jaco, Shalen, and Johannson \cite{JS79, Johannson06}. 
We will explain these two notions in the following subsections, 
and we refer to \cite{Hatcher07} for more details.

A \emph{meridian} of $L$ is a simple closed curve on the boundary of $N(L)$ that bounds a disk in $N(L)$. 
For links $L_{0}$ and $L_{1}$ in $X$, 
if a homeomorphism between $X \cut L_{0}$ and $X \cut L_{1}$ maps any meridian to a meridian, 
it can be extended to a homeomorphism between pairs $(X, L_{0})$ and $(X, L_{1})$.

For a properly embedded surface $S$ in a compact 3-manifold $M$, we define $M \cut S$ as the compact 3-manifold obtained by removing an open tubular neighborhood of $S$ from $M$. 
We say that $M \cut S$ is the manifold obtained by decomposing $M$ along $S$. 
Then a component of $M \cut S$ is called a \emph{piece}.

A \emph{longitude} of $S^{1} \times D^{2}$ is a simple closed curve isotopic to a curve $S^{1} \times \{ \ast \}$ for any point $\ast$ in the boundary $S^{1} \times \partial D^{2}$. 
A \emph{longitude} of a component $K$ of a link in $S^{1} \times D^{2}$ or $T^{2} \times I$ is a simple closed curve in the boundary of an open tubular neighborhood $N(K)$ whose linking number with $K$ is zero. 
Here the linking number is defined in the usual way using projections $S^{1} \times D^{2} \to S^{1} \times I$ and $T^{2} \times I \to T^{2}$.

\subsection{Prime decomposition}

The connected sum $M_{0} \# M_{1}$ of 3-manifolds $M_{0}$ and $M_{1}$ is obtained by removing a ball in each $M_{i}$ and gluing them along the resulting boundary spheres. 
If $M_{0}$ and $M_{1}$ are connected and oriented, 
the oriented connected sum $M_{0} \# M_{1}$ is uniquely determined. 
The connected sum operation is commutative and associative, 
and $S^{3}$ is the identity element for the connected sum. 
A 3-manifold $M$ is \emph{prime} 
if $M = P \# Q$ implies that $P = S^{3}$ or $Q = S^{3}$. 
A 3-manifold $M$ is \emph{irreducible} 
if every embedded sphere in $M$ bounds a 3-ball. 
In the context of our paper, the 3-manifolds $S^{3}$, $S^{1} \times D^{2}$, $T^{2} \times I$, and $T^{3}$ are irreducible. 
If an orientable 3-manifold $M$ is prime and \emph{not} irreducible, 
then $M = S^{1} \times S^{2}$. 
The \emph{prime decomposition} of a compact orientable 3-manifold $M$ is a decomposition of the form $M = P_{1} \# \dots \# P_{n}$, 
where each $P_{i}$ is a prime 3-manifold.
Moreover, a prime decomposition is unique up to permutation and connected sums with additional copies of $S^3$.

Let $L$ be a link in an irreducible 3-manifold $X$. 
We say that $L$ splits to links $L^{\prime}$, $L_{1}$, \dots, $L_{m}$ 
if $L = L^{\prime} \sqcup L_{1} \sqcup \dots \sqcup L_{m}$ and there are disjoint balls $B_{1}, \dots, B_{m}$ in $X$ disjoint from $L^{\prime}$ such that each $B_{i}$ contains $L_{i}$. 
If $m \geq 1$, then $L$ is called a \emph{split} link. 
The link $L$ is split 
if and only if the complement $X \cut L$ is not irreducible. 
The following lemma is a consequence of the prime decomposition of the complement $X \cut L$ of a link $L$. 

\begin{lem}
\label{lem:prime}
Let $L$ be a link in an irreducible compact orientable 3-manifold $X$. 
Then $L$ maximally splits to links $L^{\prime}$, $L_{1}$, \dots, $L_{m}$, 
where $L^{\prime}$ is a (possibly empty) non-split link, and $L_{1}$, \dots, $L_{m}$ are contained in disjoint 3-balls. 
Moreover, this splitting is unique up to permutations of $L_{1}$, \dots, $L_{m}$. 
\end{lem}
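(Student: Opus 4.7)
The plan is to deduce the statement from the prime decomposition theorem applied to $X \cut L$: a maximal system of essential 2-spheres there will translate, via the irreducibility of $X$, into a system of disjoint balls in $X$ that isolate the split-off pieces $L_{k}$. Concretely, I would invoke the prime decomposition of $X \cut L$ to obtain a collection $\mathcal{S} = \{S_{1}, \ldots, S_{n}\}$ of pairwise disjoint, pairwise non-parallel essential 2-spheres, maximal with these properties and unique up to isotopy. Since $X$ is irreducible, each $S_{i}$ bounds a unique 3-ball $B_{i} \subset X$, and this ball must meet $L$---otherwise $S_{i}$ would bound a ball in $X \cut L$ and fail to be essential. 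The family $\{B_{1}, \ldots, B_{n}\}$ is nested because any two disjoint spheres in the irreducible manifold $X$ bound balls that are either disjoint or one contained in the other. Selecting the outermost members $B_{i_{1}}, \ldots, B_{i_{m}}$ (those not strictly contained in any other $B_{j}$), which are pairwise disjoint, and setting $L_{k} := L \cap B_{i_{k}}$ and $L' := L \setminus \bigcup_{k=1}^{m} L_{k}$, produces a splitting of $L$ of the required form, with each $L_{k}$ nonempty and contained in the disjoint ball $B_{i_{k}}$.

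The main obstacle will be verifying that $L'$ is non-split, and this is where the maximality of $\mathcal{S}$ enters crucially. Assuming $L'$ admits a splitting sphere $S'$ in $X \cut L'$, a standard innermost-circle surgery on $S' \cap \partial B_{i_{k}}$ lets us isotope $S'$ to be disjoint from every $B_{i_{k}}$; a brief inside-or-outside analysis (a sphere sitting inside some $B_{i_{k}}$ could not separate the external link $L'$) shows $S'$ must lie outside all of them, so $S' \subset X \cut L$. The sphere $S'$ remains essential in $X \cut L$, since the ball $B' \subset X$ it bounds contains components of $L'$ by hypothesis. It is also non-parallel to any $S_{i}$: a parallelism would yield an $S^{2} \times I \subset X \cut L$ cobounded by $S'$ and $S_{i}$, forcing $L \cap B' = L \cap B_{i} \subset \bigcup_{k} L_{k}$, which is disjoint from $L'$ and contradicts that $S'$ separates $L'$. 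Thus $\mathcal{S} \cup \{S'\}$ would contradict the maximality of $\mathcal{S}$, so $L'$ must be non-split.

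Finally, uniqueness of the splitting follows from the uniqueness of the prime decomposition: any two maximal systems of essential spheres in $X \cut L$ differ by an isotopy of $X \cut L$, which induces an isotopy of the corresponding outermost balls in $X$ and hence matches the resulting sublinks $L_{k}$ up to permutation.
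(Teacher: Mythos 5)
Your general strategy---translating essential spheres in $X \cut L$ into balls in $X$ via irreducibility---is the same as the paper's, but two steps do not hold up. First, a \emph{maximal} collection of disjoint, pairwise non-parallel essential spheres is the wrong object: by maximality it must contain, besides the spheres bounding the individual split-off pieces, spheres enclosing several of them at once (if $L_{1}$ and $L_{2}$ lie in disjoint balls $B_{1}, B_{2}$ disjoint from $L'$, there is an essential sphere, non-parallel to $\partial B_{1}$ and $\partial B_{2}$, bounding a ball that contains $B_{1} \cup B_{2}$ and misses $L'$). Your outermost balls are then precisely these large ones, so your $L_{k} = L \cap B_{i_{k}}$ can itself be a split link and the splitting you produce is not maximal; you verify that $L'$ is non-split but never that each $L_{k}$ is, and with this construction that claim is false. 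Second, the uniqueness step rests on the assertion that any two maximal sphere systems in $X \cut L$ are isotopic; this is false --- already the sphere systems realizing a prime decomposition are not unique up to isotopy (only the summands are), and maximal systems in your sense additionally vary with the choice of nesting pattern and with the homotopy class of the tubes joining the enclosed balls.

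The paper avoids both problems by working with the prime decomposition itself rather than with a maximal sphere system: it writes $X \cut L = P_{0} \# \cdots \# P_{m}$ with each $P_{i} \neq S^{3}$, arranges the corresponding spheres so that the balls $B_{1}, \dots, B_{m}$ they bound in $X$ are disjoint, and sets $L_{i} = L \cap B_{i}$. Primeness of each $P_{i}$ (which is the capped-off $B_{i} \setminus N(L_{i})$, i.e.\ the complement of $L_{i}$ in $S^{3}$) is exactly what makes each $L_{i}$ non-split, and uniqueness of the splitting is inherited from uniqueness of the prime summands --- which determines the partition of the components of $L$ and each pair $(B_{i}, L_{i})$ up to homeomorphism --- not from any uniqueness of the spheres. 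Your argument that $L'$ is non-split is essentially sound, but the construction of the $L_{k}$ and the uniqueness step both need to be rerouted through the summands in this way.
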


\begin{proof}
Consider the prime decomposition $X \cut L = P_{0} \# P_{1} \# \dots \# P_{m}$ such that $P_{i} \neq S^{3}$. 
There exist disjoint spheres $S_{1}$, \dots, $S_{m}$ in $X \cut L$ along which the connected sums in the prime decomposition occur. 
Since $X$ is irreducible, each $S_{i}$ bounds a ball $B_{i}$ in $X$. 
If $B_{i}$ is contained in $B_{j}$, 
we can replace $S_{j}$ with a smaller one so that $B_{j}$ is disjoint from $B_{i}$. 
Hence we may assume that the balls $B_{1},\ldots,B_m$ are disjoint. 
Then by permuting the indices, we have $P_{0} = (X \cut L) \cup B_{1} \cup \dots \cup B_{m}$, and $P_{i}$ is obtained by capping $X \cut L \cap B_{i}$ with a ball for $1 \leq i \leq m$. 
Let $L_{i} = L \cap B_{i}$ and $L^{\prime} = L \setminus (L_{1} \sqcup \dots \sqcup L_{m})$. 
Then $L$ splits to links $L^{\prime}$, $L_{1}$, \dots, $L_{m}$. 
The maximality and uniqueness follow from those of the prime decomposition. 
In particular, $L^{\prime}$ is non-split. 
\end{proof}

\subsection{Seifert fibered 3-manifolds}

For coprime integers $p$ and $q \neq 0$, a \emph{$(p,q)$-model Seifert fibration} on $S^{1} \times D^{2}$ is a decomposition of $S^{1} \times D^{2}$ into disjoint circles, called \emph{fibers}, obtained from the segments $[0,1] \times \{ x \}$ in $[0,1] \times D^{2}$ by gluing $\{ 0 \} \times D^{2}$ and $\{ 1 \} \times D^{2}$ via a rotation of $2\pi p/q$. 
The $(p,1)$-model Seifert fibration is the same as the fibration given by the usual product structure on $S^1\times D^2$ (sometimes referred to as a \emph{product fibration}) -- that is, every fiber is a circle over a point in the disk and the resulting space is $S^{1} \times D^{2}$.

A \emph{Seifert fibration} on a 3-manifold $M$ is a decomposition of $M$ into disjoint circles, also called \emph{fibers}, such that each fiber has a tubular neighborhood with a model Seifert fibration. 
A 3-manifold $M$ is called a \emph{Seifert fibered 3-manifold} if $M$ admits a Seifert fibration. 
A fiber in a Seifert fibered 3-manifold is called a \emph{regular} fiber if it has a tubular neighborhood with a product fibration. 
Otherwise it is called a \emph{singular} fiber.

The classification of Seifert fibered 3-manifolds was established many years ago, and here, we use the notation for orientable Seifert fibered 3-manifolds following \cite{Budney06, Hatcher07}. 
For $g \in \bbZ$ and $b \in \bbZ_{\geq 0}$, let $\Sigma_{g,b}$ denote the orientable compact surface of genus $g$ with $b$ boundary components if $g \geq 0$. 
Let $\Sigma_{g,b}$ denote the non-orientable compact surface that is the connected sum of $-g$ copies of $\bbR P^{2}$ and $\Sigma_{0,b}$ if $g < 0$. 
Let $M(g, b; \alpha_{1} / \beta_{1}, \dots, \alpha_{k} / \beta_{k})$ denote the orientable Seifert fibered 3-manifold that is fibered over $\Sigma_{g,b}$ with at most $k$ singular fibers and fiber data $\alpha_{i} / \beta_{i} \in \bbQ$. 
More precisely, the 3-manifold $M(g, b; \alpha_{1} / \beta_{1}, \dots, \alpha_{k} / \beta_{k})$ is obtained from the orientable $S^{1}$-bundle over $\Sigma_{g,b+k}$ by Dehn filling along $k$ boundary components 
so that the meridians of attached solid tori are glued to the slopes $\alpha_{i} / \beta_{i}$. 
Here the slope $\alpha_{i} / \beta_{i}$ on the torus $T_{i}$ for coprime integers $\alpha_{i}$ and $\beta_{i}$ is represented by $\alpha_{i} \lambda_{i} + \beta_{i} \mu_{i} \in H_{1}(T_{i}, \bbZ)$ for an oriented pair of a fiber $\lambda_{i}$ and a boundary component of a section $\mu_{i}$. 
The Seifert fibration on $M(g, b; \alpha_{1} / \beta_{1}, \dots, \alpha_{k} / \beta_{k})$ is obtained by extending the product fibration over $\Sigma_{g,b+k}$. 
The 3-manifolds with non-unique Seifert fibrations are classified. 
We state the classification for bounded orientable Seifert fibered 3-manifolds. 

\begin{prop}[\cite{Jaco80} Theorem VI. 18]
\label{prop:unique-seifert}
Let $M$ be an orientable Seifert fibered 3-manifold with non-empty boundary. 
Then a Seifert fibration on $M$ is unique up to isotopy 
unless $M$ is the solid torus $S^{1} \times D^{2}$, the thickened torus $T^{2} \times I$, or the twisted $I$-bundle over the Klein bottle $S^{1} \tilde{\times} S^{1} \tilde{\times} I$. 

The solid torus $S^{1} \times D^{2}$ admits the model Seifert fibrations of $M(0,1; \alpha / \beta)$. 
The thickened torus $T^{2} \times I$ admits the product fibration of $M(0,2;)$, which is unique up to isomorphism. 
The twisted $I$-bundle over the Klein bottle $S^{1} \tilde{\times} S^{1} \tilde{\times} I$ is homeomorphic to $M(0,1;1/2,1/2)$ and $M(-1,1;)$. 
\end{prop}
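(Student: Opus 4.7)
The plan is to follow the incompressible-surface approach pioneered by Waldhausen. Suppose $M$ admits two Seifert fibrations $\mathcal{F}_{0}$ and $\mathcal{F}_{1}$. Since $M$ is orientable, Seifert fibered, and $\partial M \neq \emptyset$, every boundary component is a torus, and $M$ is irreducible and Haken. The central tool I would invoke is Waldhausen's theorem that every properly embedded, two-sided, incompressible, and $\partial$-incompressible surface in a Seifert fibered 3-manifold can, after isotopy, be made either \emph{vertical} (saturated by fibers) or \emph{horizontal} (transverse to fibers) with respect to a given Seifert fibration.

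The first step is to understand the behavior on $\partial M$. Each fibration $\mathcal{F}_{j}$ induces a fiber slope $\ell_{j}(T) \subset T$ on every boundary torus $T$. The generic case is that $\ell_{0}(T) = \ell_{1}(T)$ for all $T$, and here I would build an isotopy $\mathcal{F}_{0} \simeq \mathcal{F}_{1}$ in three stages: first align the fibrations on a collar of $\partial M$ using that the product fibration of $T \times I$ with a given fiber slope is unique up to isotopy; then cut along a maximal disjoint family of incompressible tori that can be isotoped to be simultaneously vertical for both fibrations (existence via Waldhausen applied to a vertical torus of $\mathcal{F}_{1}$); and finally match the fibrations on each resulting piece using the uniqueness of the $(p,q)$-model fibration on a tubular neighborhood of each singular fiber. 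This reduces the problem to determining when the boundary slopes $\ell_{0}$ and $\ell_{1}$ must agree.

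The second step, and the main obstacle, is enumerating the cases where $\ell_{0}(T) \neq \ell_{1}(T)$ on some boundary torus. When this happens, the two fibrations endow the same torus with two non-isotopic circle foliations compatible with circle foliations of $M$, a situation rigid enough that $M$ is very constrained. A case-by-case analysis using the base orbifold, its orbifold Euler characteristic, and the Euler number of the fibration should yield precisely the three exceptional families in the statement. For $M = S^{1} \times D^{2}$, a disk base gives the family $M(0,1;\alpha/\beta)$, with $\alpha/\beta$ any non-meridional slope. For $M = T^{2} \times I$, any primitive element of $H_{1}(T^{2};\bbZ)$ can be the fiber slope, and once chosen the fibration is unique up to isomorphism, giving the product $M(0,2;)$. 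For the twisted $I$-bundle over the Klein bottle, the two fibrations $M(-1,1;)$ and $M(0,1;1/2,1/2)$ correspond to viewing the base orbifold as a Möbius band or as a disk with two cone points of order two; this correspondence can be established by taking the double cover $T^{2} \times I$ and descending the two natural product fibrations through the free involution whose quotient is this $I$-bundle.

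The main obstacle is the case-by-case exhaustion in the second step: one must verify that no other Seifert fibered 3-manifold with non-empty boundary admits non-isotopic fibrations. The subtle point is the twisted $I$-bundle over the Klein bottle, which has only a single boundary torus (so the boundary-slope argument alone is insufficient as in the solid-torus case), yet admits two genuinely different fibrations because its base orbifold has two distinct natural descriptions. Handling this case cleanly requires analyzing the $\bbZ/2$-equivariant structure of its double cover $T^{2} \times I$ to simultaneously identify both fibrations.
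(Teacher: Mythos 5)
The paper offers no proof of this proposition: it is quoted verbatim as a known classification result, with the citation to Jaco's book (Theorem VI.18) serving as the proof. So there is nothing internal to compare your argument against; I can only assess your sketch against the standard argument in the literature, which it does follow in outline. Your tools are the right ones (the Waldhausen vertical/horizontal dichotomy for essential surfaces in a Seifert fibered space, reduction to boundary fiber slopes, and the base-orbifold analysis), and your descriptions of the three exceptional manifolds are accurate — in particular your identification of the two fibrations of $S^{1} \tilde{\times} S^{1} \tilde{\times} I$ via the free involution on its double cover $T^{2} \times I$ is correct: the $\bbZ/2$-action has two invariant fibers in one product fibration (yielding the two multiplicity-two singular fibers of $M(0,1;1/2,1/2)$ over a disk base) and no invariant fibers in the other (yielding $M(-1,1;)$ over a M\"obius band base).

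The genuine gap is that the decisive step — the exhaustive verification that \emph{no other} bounded orientable Seifert fibered manifold admits non-isotopic fibrations — is announced rather than carried out, and for a classification statement that exhaustion \emph{is} the theorem. Two specific points are left unaddressed. First, your "generic case" reduction assumes that agreement of boundary fiber slopes plus matching of singular-fiber neighborhoods yields an isotopy of fibrations; but the two fibrations need not have identifiable singular fiber sets a priori (witness $M(0,1;1/2,1/2)$ versus $M(-1,1;)$, where one fibration has two singular fibers and the other has none), so "matching the $(p,q)$-model neighborhoods" presupposes part of what must be proved. The standard route here is to observe that a closed horizontal surface cannot exist when $\partial M \neq \emptyset$, so essential tori are automatically vertical in both fibrations, and the real work lies with essential vertical \emph{annuli} of one fibration that become horizontal in the other; a horizontal annulus exhibits $M$ as an $I$-bundle or a union of fibered solid tori, which is what forces $M$ into the three-item list. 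Second, the claim that distinct fibrations necessarily induce distinct boundary slopes is asserted, not argued; the cleaner standard argument avoids it by characterizing the fiber subgroup as the unique maximal normal cyclic subgroup of $\pi_{1}(M)$ whenever the base orbifold is large enough, isolating the exceptions as exactly the manifolds with $\pi_{1}$ equal to $\bbZ$, $\bbZ^{2}$, or the Klein bottle group. Since the paper itself delegates all of this to Jaco, your sketch is a reasonable reconstruction of the strategy, but it is not yet a proof.
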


\subsection{Hyperbolic 3-manifolds}

A hyperbolic manifold is a complete Riemannian manifold of constant sectional curvature $-1$. 
We refer to \cite{BP92, Ratcliffe19, Thurston78} for details on hyperbolic 3-manifolds.

An compact irreducible 3-manifold $M$ is \emph{(algebraically) atoroidal} 
if any $\bbZ \times \bbZ$-subgroup of $\pi_{1}(M)$ is contained in a peripheral subgroup 
(i.e. an image of the fundamental group of a boundary component). 
A finite-volume hyperbolic 3-manifold is the interior of an atoroidal 3-manifold. 
Conversely, Thurston's hyperbolization \cite{Thurston82} implies that the interior of an atoroidal 3-manifold with non-empty boundary consisting of tori is homeomorphic to a finite-volume hyperbolic 3-manifold 
unless it is $T^{2} \times I$. 
From now on, a compact 3-manifold whose interior is a hyperbolic 3-manifold is also called a hyperbolic 3-manifold. 

Each boundary component of a finite-volume hyperbolic 3-manifold is called a \emph{cusp}. 
A neighborhood of a cusp is isometric to a quotient of a horoball in the hyperbolic 3-space by a $\bbZ \times \bbZ$-subgroup of the fundamental group. 
Such a neighborhood is called a \emph{horocusp}. 
The boundary of a horocusp is isometric to a Euclidean torus. 
Then each cusp inherits a unique Euclidean structure up to scaling. 

An important property of a hyperbolic 3-manifold is the Mostow rigidity theorem (also known as the Mostow--Prasad rigidity theorem) \cite{Mostow73, Prasad73}, which states that the hyperbolic structure on an atoroidal 3-manifold is unique. 
More precisely, suppose that $M_{0}$ and $M_{1}$ are finite-volume hyperbolic 3-manifolds. 
If there exists a homeomorphism $f \colon M_{0} \to M_{1}$, 
then $f$ is isotopic to a unique isometry from $M_{0}$ to $M_{1}$. 
In fact, a homotopy equivalence between finite-volume hyperbolic manifolds of dimension at least three is homotopic to an isometry. 
The fact that homotopic homeomorphisms between Haken 3-manifolds are isotopic was shown by Waldhausen \cite{Waldhausen68} (and shown by Gabai, Meyerhoff, and Thurston \cite{GMT03} for general 3-manifolds). 

A link $L$ in a 3-manifold $X$ with (possibly empty) boundary consisting of tori is called \emph{hyperbolic} 
if the interior of the complement $X \cut L$ admits a finite-volume hyperbolic structure.
Due to Thurston's hyperbolization, plenty of links are hyperbolic. 
The hyperbolic structure of each hyperbolic link is unique by the Mostow rigidity theorem. 
Hence the hyperbolic structure induces many geometric invariants, such as the hyperbolic volume. 
In this paper, we use the Mostow rigidity theorem to consider symmetries of hyperbolic links.

\subsection{JSJ decomposition}

The JSJ decomposition is a standard decomposition of a compact irreducible 3-manifold along incompressible tori as follows. 
A properly embedded surface $S$ in a 3-manifold $M$ is \emph{2-sided} 
if its normal bundle is trivial. 
If $S$ and $M$ are orientable, then $S \subset M$ is always 2-sided. 
A 2-sided surface $S$ is \emph{compressible} 
if there is a disk $D \subset M$ 
such that $D \cap S = \partial D$ and $\partial D$ does not bound a disk in $S$. 
Such a disk $D$ is called a \emph{compressing disk}. 
In this case, we can compress the surface $S$ along $D$ to a new surface 
obtained by removing a tubular neighborhood of $\partial D$ from $S$ 
and capping it with two disks parallel to $D$. 
A 2-sided surface $S$ other than $S^{2}$ or $D^{2}$ is \emph{incompressible} 
if it is not compressible. 
The loop theorem \cite[Theorem 3.1]{Hatcher07} implies that $S \subset M$ is incompressible 
if and only if the induced homomorphism $\pi_{1}(S) \to \pi_{1}(M)$ is injective.

Let $M$ be a compact irreducible 3-manifold with (possibly empty) boundary consisting of tori. 
The \emph{JSJ decomposition} (also known as the torus decomposition) of $M$ 
is decomposition along (possibly empty) disjoint incompressible tori 
that satisfies the following: 
\begin{itemize}
\item each piece after decomposition is Seifert fibered or atoroidal, and 
\item the collection of tori is minimal under the above condition. 
\end{itemize}
Such tori are unique up to isotopy, and called the \emph{JSJ tori}. 
Each piece after the JSJ decomposition is called a \emph{JSJ piece}. 
Thurston's hyperbolization \cite{Thurston82} implies that 
an atoroidal 3-manifold with boundary is hyperbolic. 
The geometrization theorem proven by Perelman~\cite{Perelman02, Perelman03a} implies that 
a closed atoroidal 3-manifold is spherical or hyperbolic. 
Moreover, a spherical 3-manifold admits a Seifert fibration (see \cite{Scott83} for details). 
Hence each JSJ piece is Seifert fibered or hyperbolic. 
In this paper, however, the closed case is not necessary. 

We will investigate the JSJ decomposition of a link complement $X \cut L$ 
in Section~\ref{section:jsj}. 
Now we show the behavior of the JSJ decomposition for a finite cover. 
Note that a link complement $X \cut L$ which we will consider 
does not contain $S^{1} \tilde{\times} S^{1} \tilde{\times} I$. 

\begin{lem}
\label{lem:jsj-cover}
Let $M$ be an irreducible orientable 3-manifold with (possibly empty) boundary consisting of tori. 
Let $\widetilde{M}$ be a finite cover of $M$. 
Then the JSJ tori of $\widetilde{M}$ is the preimage of the JSJ tori of $M$ 
unless either a JSJ piece of $M$ is $S^{1} \tilde{\times} S^{1} \tilde{\times} I$ or $M$ is a Sol 3-manifold. 
\end{lem}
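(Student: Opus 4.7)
The plan is to set $\widetilde{\mathcal{T}} := p^{-1}(\mathcal{T})$, where $\mathcal{T}$ denotes the JSJ tori of $M$ and $p \colon \widetilde{M} \to M$ is the covering map, and show that $\widetilde{\mathcal{T}}$ coincides up to isotopy with the JSJ tori of $\widetilde{M}$ outside the stated exceptions. First I would check that $\widetilde{\mathcal{T}}$ is a valid family of incompressible tori cutting $\widetilde{M}$ into Seifert-fibered or atoroidal pieces. Incompressibility passes to finite covers via the $\pi_{1}$-injectivity characterization. Each piece $\widetilde{P}$ of $\widetilde{M} \cut \widetilde{\mathcal{T}}$ finitely covers some JSJ piece $P$ of $M$: if $P$ is Seifert fibered, the fibration lifts to a Seifert fibration of $\widetilde{P}$; if $P$ is hyperbolic, so is $\widetilde{P}$, hence atoroidal. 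By minimality of the JSJ decomposition of $\widetilde{M}$, its JSJ tori are isotopic to a sub-collection of $\widetilde{\mathcal{T}}$.

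The heart of the argument is to show the reverse inclusion, namely that no component of $\widetilde{\mathcal{T}}$ can be removed. Suppose for contradiction that a component $\widetilde{T}$ is removable, meaning the union $U := \widetilde{P}_{1} \cup \widetilde{T} \cup \widetilde{P}_{2}$ of $\widetilde{T}$ with its two adjacent pieces (possibly equal if $\widetilde{T}$ is non-separating) is Seifert or atoroidal. If some $\widetilde{P}_{i}$ is hyperbolic, then $U$ cannot be Seifert, so is atoroidal; the incompressible torus $\widetilde{T}$ must then be boundary parallel in $U$, but the parallelism region $T^{2} \times I$ cannot lie in the hyperbolic $\widetilde{P}_{i}$ (its cusps represent distinct maximal peripheral $\bbZ^{2}$ subgroups), so it sits entirely in $\widetilde{P}_{3-i}$, forcing $\widetilde{P}_{3-i} \cong T^{2} \times I$. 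If instead both $\widetilde{P}_{i}$ are Seifert, then $U$ admits a Seifert fibration restricting to ones on the $\widetilde{P}_{i}$; assuming each $\widetilde{P}_{i}$ has a unique Seifert fibration up to isotopy, that fibration is the lift of the unique Seifert fibration of $P_{i}$, so equality of regular-fiber slopes on $\widetilde{T}$ would descend via $p$ to equality on $T$, contradicting that $T$ is a JSJ torus of $M$. By Proposition~\ref{prop:unique-seifert}, the only way to avoid this contradiction is that some $\widetilde{P}_{i}$ is $S^{1} \times D^{2}$, $T^{2} \times I$, or $S^{1} \tilde{\times} S^{1} \tilde{\times} I$.

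The solid torus $S^{1} \times D^{2}$ is ruled out since its boundary torus is compressible and so cannot be a component of $\widetilde{\mathcal{T}}$. If $\widetilde{P}_{i}$ is $T^{2} \times I$ or $S^{1} \tilde{\times} S^{1} \tilde{\times} I$, then $\pi_{1}(P_{i})$ contains $\bbZ^{2}$ as a finite-index subgroup, so $P_{i}$ is itself one of these two Seifert manifolds. The case $P_{i} = S^{1} \tilde{\times} S^{1} \tilde{\times} I$ is the first excluded hypothesis. In the remaining case $P_{i} = T^{2} \times I$, I would argue that minimality of $\mathcal{T}$ forces a torus-bundle structure on $M$: because $T^{2} \times I$ admits Seifert fibrations with regular fiber of any prescribed slope on either boundary, any Seifert neighbor across a JSJ torus could be matched to $P_{i}$ and merged, violating minimality of $\mathcal{T}$, unless both boundary tori of $P_{i}$ are identified through a single non-separating JSJ torus; this presents $M$ as a torus bundle over $S^{1}$, and only the Anosov (Sol) monodromies yield non-Seifert total spaces with non-trivial JSJ. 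I expect this last analysis -- pinning down precisely when $T^{2} \times I$ can arise as a JSJ piece of $M$ -- to be the main technical obstacle, since it requires a careful case check on how the two boundary tori of such a piece sit inside $M$ relative to $\mathcal{T}$ and $\partial M$, together with the classification of torus bundles by monodromy type to isolate the Sol case.
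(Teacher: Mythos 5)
Your proposal is correct and follows essentially the same route as the paper's proof: pull back the JSJ tori, observe that the pieces stay Seifert fibered or hyperbolic, locate any failure of minimality at a torus whose adjacent Seifert pieces have matching fibers, descend the fibrations using Proposition~\ref{prop:unique-seifert}, rule out the solid torus by incompressibility, and reduce the remaining $T^{2} \times I$ and $S^{1} \tilde{\times} S^{1} \tilde{\times} I$ cases to the stated exceptions. The only real difference is one of detail: you spell out the boundary-parallelism argument for a hyperbolic neighbor and the final step that a $T^{2} \times I$ JSJ piece forces $M$ to be a Sol torus bundle, both of which the paper subsumes in the standard characterization of JSJ minimality and asserts without proof.
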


If there are at least two JSJ pieces of $M$, 
a JSJ piece of $M$ is $S^{1} \tilde{\times} S^{1} \tilde{\times} I$, 
and its preimage in $\widetilde{M}$ is $T^{2} \times I$, 
then this $T^{2} \times I$ is not a JSJ piece. 
In this case, the minimality of JSJ tori is violated. 

A Sol 3-manifold is a sufficiently twisted $T^{2}$-bundle over $S^{1}$. 
A fiber of a Sol 3-manifold is a unique JSJ torus. 
A finite cover $\widetilde{M}$ of a Sol 3-manifold $M$ is also a Sol 3-manifold. 
Then the preimage of the JSJ torus of $M$ is not connected in general, 
and the minimality of JSJ tori is violated. 

\begin{proof}
Let $\mathcal{T}$ denote the collection of JSJ tori of $M$, 
and let $\widetilde{\mathcal{T}}$ denote the preimage of $\mathcal{T}$ in $\widetilde{M}$. 
If a 3-manifold is Seifert fibered or hyperbolic, 
a finite cover of it is also in the same class. 
Hence the tori in $\widetilde{\mathcal{T}}$ decomposes $\widetilde{M}$ 
into Seifert fibered or hyperbolic pieces. 

Suppose that $\widetilde{\mathcal{T}}$ is not the collection of JSJ tori. 
Then $\widetilde{\mathcal{T}}$ has a torus $\widetilde{T}$ 
between two Seifert fibered pieces $\widetilde{M_{0}}$ and $\widetilde{M_{1}}$ 
of $\widetilde{M} \cut \widetilde{\mathcal{T}}$ 
whose fibers coincide in $\widetilde{T}$. 
Let $T$, $M_{0}$, and $M_{1}$ respectively denote 
the image of $\widetilde{T}$, and $\widetilde{M_{0}}$ and $\widetilde{M_{1}}$. 
Then $T$ is a JSJ torus, and $M_{0}$ and $M_{1}$ are JSJ pieces of $M$. 
Note that $\widetilde{M_{0}} = \widetilde{M_{1}}$ and $M_{0} = M_{1}$ are possible. 
For $i=0,1$, a Seifert fibration on $M_{i}$ lifts to that of $\widetilde{M_{i}}$. 
If the Seifert fibrations on $\widetilde{M_{i}}$ and $M_{i}$ are unique, 
then the fibers of $M_{0}$ and $M_{1}$ coincide in $T$, 
which contradicts the fact that $T$ is a JSJ-torus. 
Hence at least one of $\widetilde{M_{i}}$ and $M_{i}$ 
is $S^{1} \times D^{2}$, $T^{2} \times I$, or $S^{1} \tilde{\times} S^{1} \tilde{\times} I$ 
by Proposition~\ref{prop:unique-seifert}. 
Since these three 3-manifolds finitely cover no other orientable 3-manifolds, 
$M_{0}$ or $M_{1}$ 
is $S^{1} \times D^{2}$, $T^{2} \times I$, or $S^{1} \tilde{\times} S^{1} \tilde{\times} I$. 
Moreover, $S^{1} \times D^{2}$ and $T^{2} \times I$ are not JSJ pieces 
unless $M$ itself is $S^{1} \times D^{2}$, $T^{2} \times I$, or a Sol 3-manifold. 
Therefore $M_{0}$ or $M_{1}$ is $S^{1} \tilde{\times} S^{1} \tilde{\times} I$, or $M$ is a Sol 3-manifold. 
\end{proof}

To consider the JSJ decomposition of a link complement $X \cut L$, 
we need to describe regions bounded by an embedded torus in $X$. 
We say that a \emph{knotted hole ball} in a 3-manifold $X$ is the complement of a properly embedded arc in a 3-ball contained in $X$ that is not a solid torus.

\section{JSJ decomposition of link complements}
\label{section:jsj}

\subsection{Links in the solid torus}

In this subsection, we consider links in the solid torus $S^{1} \times D^{2}$. 
We first define elementary links in the solid torus, 
which are analogous to the torus links in the 3-sphere. 
Let $S^{1} = \bbR / \bbZ$ and $D^{2} = \{ (x, y) \in \bbR^{2} \mid x^{2} + y^{2} \leq 1 \}$. 
For $(p, q) \in \bbZ^{2} \setminus \{ (0,0) \}$, 
define 
\begin{align*}
T_{0}(p, q)  = \{ & \left( p^{\prime} t, \frac{1}{2} \cos 2\pi \left( q^{\prime} t + \frac{k}{p} \right), \frac{1}{2} \sin 2\pi \left( q^{\prime} t + \frac{k}{p} \right) \right) \in S^{1} \times D^{2} \mid \\ 
& t \in [0,1], k = 0, \dots, d-1 \}, 
\end{align*}
where $d = \gcd(p,q)$, $p^{\prime} = p/d$, and $q^{\prime} = q/d$. 
The link $T_{0}(p,q)$ consists of $d$ rotated copies of $T_{0}(p^{\prime}, q^{\prime})$. 
Define $T_{1}(0,0) = S^{1} \times \{ (0,0) \}$ and 
\[
T_{1}(p, q) = T_{0}(p, q) \cup T_{1}(0,0) 
\]
for $(p, q) \in \bbZ^{2} \setminus \{ (0,0) \}$. 
Figure~\ref{fig1} shows the links $T_{0}(3,4)$ and $T_{1}(3,4)$. 

\begin{figure}[ht]
\centerline{\includegraphics[width=5in]{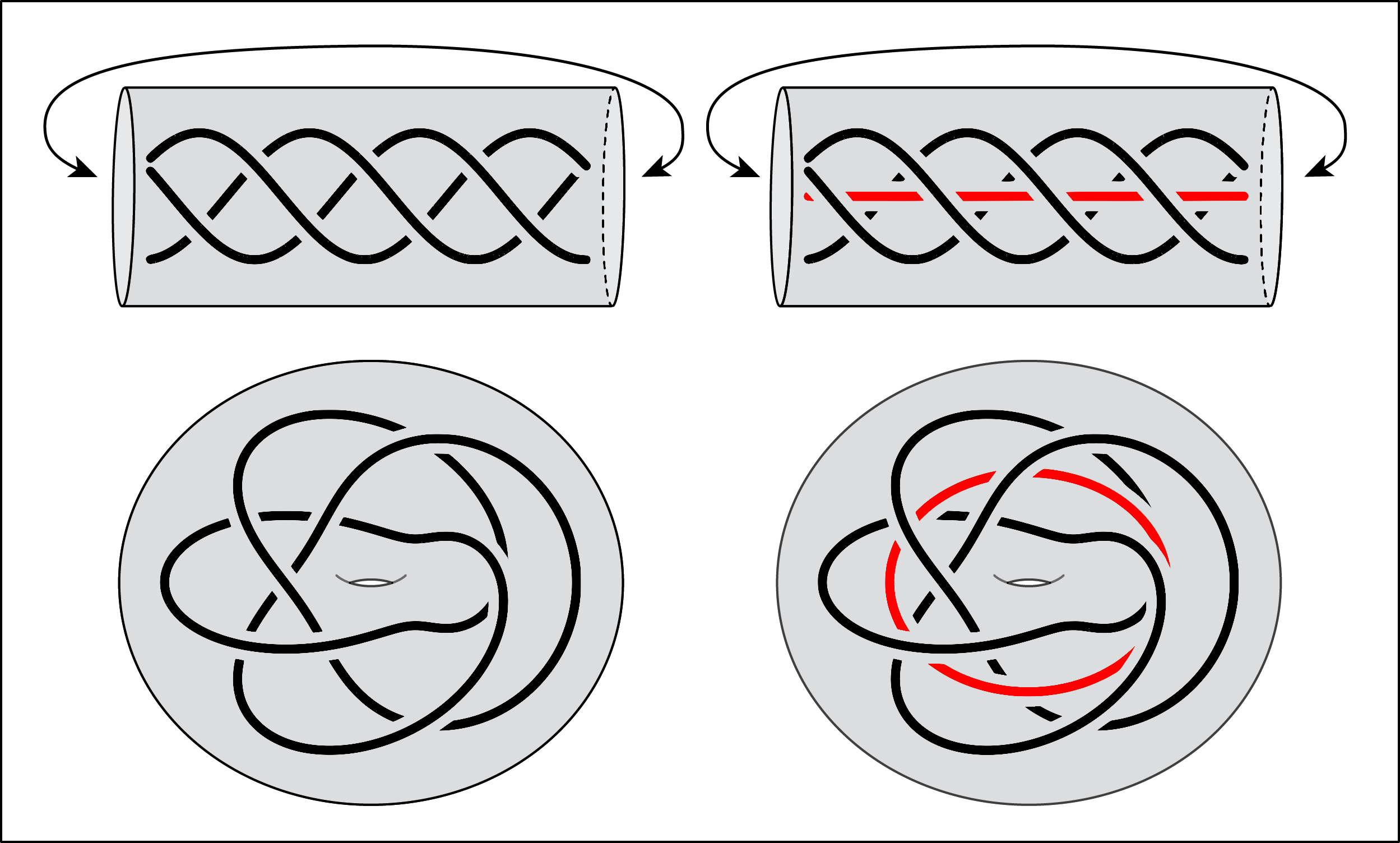}}
\vspace*{8pt}
\caption{\label{fig1} 
The links $T_{0}(3,4)$ and $T_{1}(3,4)$.}
\end{figure}

Since the links are not oriented, 
the links $T_{l}(p,q)$ and $T_{l}(-p,-q)$ for $l=0,1$ are identical. 
If $p$ divides $q$, then $T_{0}(p, q)$ is isotopic to $T_{1}(p-1,(p-1)q/p)$. 
In particular, $T_{0}(1,q)$ for any $q$ is isotopic to $T_{1}(0,0)$. 
We show that the other pairs in $T_{l}(p,q)$ are not isotopic. 

\begin{lem}
\label{lem:1-t-link}
For $i=0,1$, let $l_{i} \in \{ 0,1 \}$ and $p_{i} \geq 0$. 
Suppose that $p_{i}$ does not divide $q_{i}$ if $l_{i} = 0$. 
If $T_{l_{0}}(p_{0}, q_{0})$ and $T_{l_{1}}(p_{1}, q_{1})$ are isotopic, 
then $l_{0} = l_{1}$, $p_{0} = p_{1}$, and $q_{0} = q_{1}$. 
\end{lem}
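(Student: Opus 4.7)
The plan is to recover the triple $(l, p, q)$ from the isotopy class of $T_{l}(p, q) \subset S^{1} \times D^{2}$ by analyzing the Seifert fibered structure of the complement together with the canonical meridian-longitude framing on $\partial(S^{1} \times D^{2})$. The argument splits into a generic case where $p \geq 1$ and degenerate cases where $p = 0$.

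In the generic case I would invoke the Seifert fibration of $S^{1} \times D^{2}$ whose regular fibers are the $(p', q')$-torus curves on the tori $\{r = \mathrm{const}\}$ (with $p' := p / \gcd(p,q)$ and $q' := q/\gcd(p,q)$) and whose core is a fiber of multiplicity $p'$. Under this fibration, $T_{0}(p,q)$ is a union of $d := \gcd(p, q)$ regular fibers, while $T_{1}(p, q)$ additionally contains the core. The complement $(S^{1} \times D^{2}) \cut T_{l}(p, q)$ therefore inherits a Seifert fibration with base orbifold $\Sigma_{0, d+1}$ carrying one cone point of order $p'$ (when $l = 0$ and $p' \geq 2$) or $\Sigma_{0, d+2}$ with no cone points (when $l = 1$). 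The hypothesis of the lemma forbids $l = 0$ with $p' = 1$, and in the $l = 1$ case the complement is among the three exceptional manifolds of Proposition~\ref{prop:unique-seifert} only when $(p, q) = (0, 0)$, which is handled as degenerate. Under genericity the fibration is therefore unique up to isotopy, and I can extract three isotopy invariants of $T_{l}(p,q)$: the number $d + l$ of link-boundary tori; the presence and order $p'$ of the cone point in the base orbifold, if any; and the homology class of the Seifert fiber on the outer boundary torus $\partial(S^{1} \times D^{2})$. In the canonical longitude-meridian basis $(\lambda, \mu)$ of the solid torus, this last class is $p' \lambda + q' \mu$, which recovers $(p', q')$ exactly after the normalization $p' \geq 0$. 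Together the three invariants yield $(l, p, q) = (l, d p', d q')$.

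The degenerate triples are dispatched directly using the multiset of winding numbers of the components in $H_{1}(S^{1} \times D^{2}; \bbZ) \cong \bbZ$. For $T_{0}(0, q)$, which is $q$ parallel meridians, every component has winding $0$ and $q$ is the component count; for $T_{1}(0, 0)$, which is just the core, there is one component of winding $1$. These profiles differ from each other and from the winding profiles of the generic triples---$d$ components all of winding $p' \geq 2$ for $T_{0}(p, q)$ with $p \geq 2$, and one winding-$1$ component together with $d$ components of winding $p'$ for $T_{1}(p, q)$ with $(p,q) \neq (0,0)$---so no coincidences occur. The principal technical obstacle in the generic step is pinning down the Seifert fiber slope on the outer boundary as a full invariant, since the intrinsic Seifert data of a bounded piece is defined only modulo the fiber multiplicity. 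This is resolved by observing that the outer boundary of $S^{1} \times D^{2}$ carries a canonical meridian-longitude basis which is preserved up to isotopy by every ambient isotopy of the pair $(S^{1} \times D^{2}, L)$, so the slope $(p', q')$ (not merely $q' \bmod p'$) is unambiguously recovered.
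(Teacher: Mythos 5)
Your proof is correct, but it takes a genuinely different route from the paper's. The paper gets $l_i$ and $p_i'$ exactly as you do (from the winding classes in $\pi_{1}(S^{1} \times D^{2})$ and the component count), but it then recovers $q_i'$ by pushing the link forward under the standard embedding $S^{1} \times D^{2} \hookrightarrow \bbR^{3}$, so that $T_{0}(p,q)$ becomes the $(p,q)$-torus link in $S^{3}$, and quoting the classical classification of torus knots (for $p' \geq 2$) together with the linking number of $(2,q)$-torus links (for the $T_{1}(1,q)$ family). You instead stay inside the solid torus and read $q'$ off the Seifert fiber slope $p'\lambda + q'\mu$ on the outer boundary torus, using Proposition~\ref{prop:unique-seifert} to guarantee that the fibration of the complement --- and hence that slope, up to sign --- is an isotopy invariant. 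Your key observation that the canonical longitude--meridian framing on $\partial(S^{1}\times D^{2})$ is preserved by any ambient isotopy is exactly what resolves the usual ambiguity in the Seifert data of a bounded piece, and your checks that the exceptional manifolds of Proposition~\ref{prop:unique-seifert} are avoided under the hypothesis $p \nmid q$ are correct. What each approach buys: the paper's argument outsources the hard step to a classical theorem and needs no uniqueness statement for fibrations in this lemma, while yours is self-contained within the Seifert machinery the paper already sets up and is closer in spirit to how the analogous lemmas for $T_{2}(p,q)$ and $T_{3}(p,q,r)$ are proved. One small presentational wobble: $T_{1}(0,q)$ with $q \geq 1$ falls outside your stated generic case ($p \geq 1$) but is not listed among your degenerate examples; it is nonetheless covered, since its winding profile $\{0^{q},1\}$ is realized by no other admissible triple, so no actual gap results.
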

\begin{proof}
Let $d_{i} = \gcd(p_{i}, q_{i})$, $p_{i}^{\prime} = p_{i} / d_{i}$, and $q_{i}^{\prime} = q_{i} / d_{i}$. 
Suppose that $l_{0} = 0$. 
Since $p_{0}$ does not divide $q_{0}$, 
any component of $T_{0}(p_{0}, q_{0})$ is not isotopic to $T_{1}(0,0)$. 
Hence $l_{1} = 0$. 
Since the number of component of the link $T_{0}(p_{i}, q_{i})$ is $d_{i}$, 
we have $d_{0} = d_{1}$. 
Since any component of $T_{0}(p_{i}, q_{i})$ represents 
$p_{i}^{\prime}$ times of a generator of $\pi_{1}(S^{1} \times D^{2})$, 
we have $p_{0}^{\prime} = p_{1}^{\prime}$. 
Moreover, the knots $T_{0}(p_{0}^{\prime}, q_{0}^{\prime})$ and $T_{0}(p_{1}^{\prime}, q_{1}^{\prime})$ are isotopic. 
Since the standard embedding of the solid torus in $\bbR^{3}$ maps $T_{0}(p,q)$ to the $(p,q)$-torus link, 
the $(p_{0}^{\prime}, q_{0}^{\prime})$-torus knot and the $(p_{1}^{\prime}, q_{1}^{\prime})$-torus knot are isotopic. 
The classification of torus knots in \cite[Theorem 7.4.3]{Murasugi96} 
implies that $q_{0}^{\prime} = q_{1}^{\prime}$. 
Hence $p_{0} = p_{1}$ and $q_{0} = q_{1}$. 

Suppose that $l_{0} = l_{1} = 1$. 
If $p_{0} = q_{0} = 0$, then clearly $p_{1} = q_{1} = 0$. 
We consider the other cases. 
If $p_{0}^{\prime} \neq 1$, 
then any component of $T_{1}(p_{0}, q_{0})$ other than the component $T_{1}(0,0)$ 
is not isotopic to $T_{1}(0,0)$. 
In this case, $T_{0}(p_{0}, q_{0})$ and $T_{1}(p_{1}, q_{1})$ are isotopic. 
Hence $p_{0} = p_{1}$ and $q_{0} = q_{1}$ by the above argument. 
If $p_{0}^{\prime} = 1$, 
then any two components of $T_{1}(p_{0}, q_{0})$ are isotopic. 
We have $d_{0} = d_{1}$, $p_{1}^{\prime} = 1$.
Moreover, the links $T_{1}(1, q_{0}^{\prime})$ and $T_{1}(1, q_{1}^{\prime})$ are isotopic. 
The standard embedding of the solid torus in $\bbR^{3}$ maps $T_{1}(1,q)$ to the $(2,q)$-torus link. 
Since the $(2,q)$-torus links are distinguished by the linking number, 
we have $q_{0}^{\prime} = q_{1}^{\prime}$. 
Hence $p_{0} = p_{1}$ and $q_{0} = q_{1}$. 
\end{proof}

The complements of $T_{l}(p,q)$ other than $T_{0}(0,q)$ ($q \neq 0$) are Seifert fibered. 
In fact, the following homeomorphisms hold: 
\begin{itemize}
\item $S^{1} \times D^{2} \cut T_{0}(p, q) \simeq M(0, \gcd(p, q)+1; \alpha / \beta)$ 
for some $\alpha, \beta\in \bbZ$, and 
\item $S^{1} \times D^{2} \cut T_{1}(p, q) \simeq M(0, \gcd(p, q)+2;)$, 
\end{itemize}
where $\gcd(0,0) = 0$. 
The complement of a link in the solid torus is homeomorphic to 
the complement of a link in the 3-sphere one of whose components is the unknot. 
Moreover, Burde and Murasugi \cite{BM70} classified the links in the 3-sphere 
whose complements are Seifert fibered (see also \cite[Proposition 3.3]{Budney06}). 
This implies the following classification of Seifert fibered links. 

\begin{prop}
\label{prop:1-seifert}
Let $L$ be a link in $S^{1} \times D^{2}$. 
Suppose that the complement $S^{1} \times D^{2} \cut L$ is Seifert fibered. 
Then $L$ is isotopic to $T_{0}(p,q)$ or $T_{1}(p,q)$. 
\end{prop}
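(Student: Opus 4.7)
The plan is to reduce to the Burde--Murasugi classification of links in $S^{3}$ with Seifert fibered complement, cited in the paper as \cite[Proposition 3.3]{Budney06}. First I would embed the solid torus $S^{1} \times D^{2}$ standardly in $S^{3}$, so that its complement is a second solid torus $V$ whose core is an unknot $C \subset S^{3}$. Since $V$ is a tubular neighborhood of $C$, we have
\[
S^{3} \cut (L \cup C) \;\cong\; S^{1} \times D^{2} \cut L,
\]
so the hypothesis gives that $L \cup C$ is a link in $S^{3}$ with Seifert fibered complement.

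By the Burde--Murasugi theorem, $L \cup C$ is then isotopic to a union of fibers of some Seifert fibration $\pi \colon S^{3} \to S^{2}$. Any such fibration has at most two exceptional fibers, both unknotted, which form the cores of a Heegaard splitting of $S^{3}$, and every regular fiber is a $(p,q)$-torus knot lying on the corresponding Heegaard torus for some coprime $(p,q)$. The component $C$ of $L \cup C$ is an unknotted fiber of $\pi$.

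Next I would isotope the situation so that $V$ is a fibered solid torus in $\pi$ with $C$ as a fiber and $\partial V$ saturated by regular fibers. This uses Proposition~\ref{prop:unique-seifert}: on the solid torus every Seifert fibration is a model fibration $M(0,1;\alpha/\beta)$, so the restriction of $\pi$ to a saturated neighborhood of $C$ extends (up to isotopy) uniquely to a fibered structure on the complementary solid torus to $C$ in $S^{3}$, which must agree with $V$ after isotopy. Consequently the Heegaard torus of $\pi$ is isotopic to $\partial(S^{1} \times D^{2})$, and $\pi$ restricts to a $(p,q)$-Seifert fibration of $S^{1} \times D^{2}$. The fibers of $\pi$ inside $S^{1} \times D^{2}$ are then parallel $(p,q)$-curves on concentric tori, i.e.\ the components of a copy of $T_{0}(p,q)$, possibly together with the core circle $S^{1} \times \{0\}$ when the latter is the second exceptional fiber of $\pi$. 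Removing $C$ and reading off what remains, I conclude that $L$ is isotopic to $T_{0}(p,q)$ in the first case and to $T_{1}(p,q)$ in the second.

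The main obstacle I expect is precisely the isotopy step aligning $\pi$ with the Heegaard splitting of $S^{3}$ determined by $\partial(S^{1}\times D^{2})$: a priori the unknot $C$ might sit as a regular fiber which is, for instance, a $(1,q)$-torus knot on some other Heegaard torus, and one must verify that in this case the surrounding Seifert-fibered neighborhood of $C$ extends fiberwise to fill up all of $V$ without altering the Seifert data. The uniqueness part of Proposition~\ref{prop:unique-seifert}, together with the fact that any two unknots in $S^{3}$ are ambient isotopic, is what makes this extension go through and lets the classification into $T_{0}(p,q)$ and $T_{1}(p,q)$ be read off cleanly.
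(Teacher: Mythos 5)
Your overall strategy --- pass to $L \cup C$ in $S^{3}$ with $C$ the core of the complementary solid torus, invoke Burde--Murasugi, and then read off the fibers inside a model fibration of the solid torus --- is exactly the route the paper intends (the paper gives no more detail than the citation). However, there is a genuine gap in your first step. The Burde--Murasugi classification does \emph{not} say that every link in $S^{3}$ with Seifert fibered complement is a union of fibers of a Seifert fibration of $S^{3}$: the keychain links (an unknot $U$ together with $n \geq 2$ mutually unlinked circles, each forming a Hopf link with $U$) have complement $\Sigma_{0,n+1} \times S^{1}$, which is Seifert fibered, yet they are not unions of fibers of any genuine Seifert fibration of $S^{3}$. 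One sees this from linking numbers: any two regular fibers of a Seifert fibration of $S^{3}$ whose regular fiber is an $(\alpha_{1},\alpha_{2})$-torus knot have linking number $\alpha_{1}\alpha_{2} \geq 1$, whereas the $n$ ``keys'' are pairwise unlinked. Equivalently, for $n \geq 2$ the Seifert fibration of $\Sigma_{0,n+1} \times S^{1}$ is unique and restricts to the meridian of $U$ on $\partial N(U)$, so it cannot be extended over $U$.

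This exceptional family is not a corner case you can ignore: it is precisely how the links $T_{0}(p,0)$ with $p \geq 2$ and $T_{1}(0,q)$ with $q \geq 1$ arise (there $L \cup C$ is a keychain link with $C$ as the ring, respectively as one of the keys), and these links are part of the conclusion of the proposition --- indeed they are singled out in the hypothesis of Theorem~\ref{thm:minimal-motif} and appear in Remark~\ref{rem:splice}. Your argument, which places $C$ in a saturated neighborhood of a genuine fibration of $S^{3}$ and reads off a model fibration on the complementary solid torus, can never produce them, since every model fibration has fibers winding at least once in the longitudinal direction. To close the gap you should either quote the classification in its full form (unions of fibers of a Seifert fibration of $S^{3}$ \emph{or} keychain links, as in \cite[Proposition 3.3]{Budney06}), or argue directly that the Seifert fibration of $S^{1} \times D^{2} \cut L$ extends over every component of $L \cup C$ except possibly one whose meridian is a fiber, and treat that exceptional component separately. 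The remainder of your argument, in the case where $L \cup C$ genuinely is a union of fibers, is sound.
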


Suppose that $L$ is a non-split link in $S^{1} \times D^{2}$. 
Let us consider the JSJ decomposition of $S^{1} \times D^{2} \cut L$. 
We classify tori in the solid torus. 

\begin{lem}
\label{lem:1-torus}
Let $T$ be an embedded torus in $S^{1} \times D^{2}$. 
Then 
\begin{itemize}
\item $T$ bounds a solid torus, or 
\item $T$ bounds the connected sum of two solid tori. 
\end{itemize}
In the latter case, $T$ bounds a solid torus or a knotted hole ball. 
\end{lem}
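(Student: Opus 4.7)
The plan is to split into two cases according to whether $T$ is compressible or incompressible in $S^{1} \times D^{2}$. Write $(S^{1} \times D^{2}) \setminus T = V_{0} \sqcup V_{1}$ for the two regions bounded by $T$. The main tools are the irreducibility of the solid torus and the classical fact that every incompressible torus in $S^{1} \times D^{2}$ is boundary parallel (proved via the loop theorem together with standard surface-theoretic arguments inside a Heegaard-genus-one manifold).

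If $T$ is incompressible, the classical result just mentioned gives that $T$ is isotopic to $\partial(S^{1} \times D^{2})$; then one of $V_{0}, V_{1}$ is homeomorphic to $S^{1} \times D^{2}$ (with the other a copy of $T^{2} \times I$), so $T$ bounds a solid torus. Suppose instead $T$ is compressible. Choose a compressing disk $D$ with $\partial D \subset T$ essential and $\operatorname{int}(D) \subset V_{0}$, and compress $T$ along $D$ to produce an embedded $2$-sphere $\Sigma$. By irreducibility of $S^{1} \times D^{2}$, this sphere bounds a $3$-ball $B$. There are two possibilities. Either $B = V_{0} \setminus N(D)$, in which case $V_{0} = B \cup N(D)$ is a $3$-ball with a $1$-handle attached along two disks of $\partial B$ and hence a solid torus; or $B = V_{1} \cup N(D)$, in which case $V_{1} = B \setminus N(D)$ is a $3$-ball with the tubular neighborhood of a properly embedded arc $\alpha$ (the core of $N(D)$) removed. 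In the latter possibility, $V_{1}$ is a solid torus when $\alpha$ is boundary parallel in $B$ and a knotted hole ball otherwise, by the very definition of knotted hole ball recalled in Section~\ref{section:prelim}.

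I would interpret the ``connected sum of two solid tori'' in the statement as a description of the compressible configuration: the compressing sphere $\Sigma$ plays the role of a connect-sum sphere, exhibiting $S^{1} \times D^{2}$ as the union of two pieces that each become solid-torus-like after capping $\Sigma$ with a ball, with the region on the appropriate side of $T$ realised as a ball together with a $1$-handle whose core arc may or may not be knotted. The main obstacle is the bookkeeping in the compressible case: one must verify that the two subcases above exhaust all possibilities (which is forced by irreducibility, once $D$ is chosen), and then recognise that the arc $\alpha$ in the second subcase can genuinely be knotted in $B$, which is precisely the source of the knotted-hole-ball alternative. The incompressible case, by contrast, is routine once atoroidality of the solid torus is invoked.
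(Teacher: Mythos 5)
Your compressible-case argument is essentially the paper's proof: compress $T$ along $D$ to a sphere, use irreducibility of the solid torus to get a ball $B$, and split according to which side of the compressing sphere $B$ lies on, yielding either a solid torus on one side or a ball-minus-an-arc (solid torus or knotted hole ball) on the other. The only difference is that your incompressible case is vacuous: since $\pi_1(S^1\times D^2)\cong\mathbb{Z}$ contains no $\mathbb{Z}\times\mathbb{Z}$ subgroup, the loop theorem forces \emph{every} embedded torus in the solid torus to be compressible (even a boundary-parallel one is compressed by a meridian disk of the inner solid torus), which is exactly the observation with which the paper opens its proof.
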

\begin{proof}
Since $\pi_{1}(S^{1} \times D^{2}) \cong \bbZ$ does not have a subgroup isomorphic to $\pi_{1}(T) \cong \bbZ \times \bbZ$, 
any torus $T$ in $S^{1} \times D^{2}$ is compressible. 
A sphere $S$ is obtained by compressing $T$ along a compressing disk $D$. 
The sphere $S$ bounds a ball $B$ since the solid torus is irreducible. 
If $B$ and $D$ are disjoint, then $T$ bounds a solid torus. 
If $D$ is contained in $B$, then $T$ bounds a 3-manifold obtained by attaching 
a 1-handle to $\overline{S^{1} \times D^{2} \setminus B}$, 
which is homeomorphic to the connected sum of two solid tori. 
In this case, the other region bounded by $T$ is the complement of the 1-handle in $B$, 
and so it is a solid torus or a knotted hole ball. 
\end{proof}

We say that $L$ is a \emph{satellite link} if $S^{1} \times D^{2} \cut L$ has an essential torus. 
Then the following classification of links in the solid torus 
clearly holds by Proposition~\ref{prop:1-seifert}. 

\begin{prop}
\label{prop:1-class}
Let $L$ be a non-split link in $S^{1} \times D^{2}$. 
Then $L$ is one of the following: 
\begin{itemize}
\item $T_{0}(p,q)$, 
\item $T_{1}(p,q)$, 
\item a hyperbolic link, or 
\item a satellite link. 
\end{itemize}
\end{prop}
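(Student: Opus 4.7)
The plan is to apply the JSJ decomposition directly to $S^{1} \times D^{2} \cut L$. Since $L$ is non-split, the complement is irreducible by the discussion preceding Lemma~\ref{lem:prime}, and because $L$ is a link in the solid torus, every boundary component of $S^{1} \times D^{2} \cut L$ is a torus. Hence the JSJ machinery set up in Section~\ref{section:prelim} applies without modification, and we have a well-defined (possibly empty) collection of JSJ tori. I would split on whether this collection is empty or not.

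If there is at least one JSJ torus, then by construction it is incompressible and not boundary-parallel, hence essential. By the definition of a satellite link given just before the statement, $L$ is a satellite link, and we are done.

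If the JSJ collection is empty, then $S^{1} \times D^{2} \cut L$ is a single JSJ piece, which is either Seifert fibered or atoroidal. In the Seifert fibered case, Proposition~\ref{prop:1-seifert} immediately identifies $L$ with $T_{0}(p,q)$ or $T_{1}(p,q)$. In the atoroidal case, Thurston's hyperbolization (as recalled in the hyperbolic subsection) says that the interior of $S^{1} \times D^{2} \cut L$ admits a finite-volume hyperbolic structure unless it is $T^{2} \times I$; but $T^{2} \times I$ is Seifert fibered (and is actually the complement of a core circle, so it is already caught by the Seifert branch via $T_{1}(0,0)$). Thus in the atoroidal case $L$ is hyperbolic by definition.

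I do not expect a serious obstacle here, since each step is a direct invocation of a result already in the paper. The only piece of book-keeping worth flagging is the $T^{2} \times I$ exception in Thurston's hyperbolization: one needs to observe that this manifold is Seifert fibered so it does not create a fifth case outside the stated list. Everything else is an unpacking of definitions and a dichotomy on the JSJ decomposition.
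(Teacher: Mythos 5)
Your proposal is correct and matches the paper's (implicit) argument: the paper simply asserts that the classification "clearly holds by Proposition~\ref{prop:1-seifert}," and the intended reasoning is exactly your dichotomy on the JSJ tori of the irreducible complement, with atoroidal pieces handled by Thurston's hyperbolization and the $T^{2} \times I$ exception absorbed into the Seifert fibered case. Your explicit flagging of that exception is a reasonable piece of book-keeping that the paper leaves unstated.
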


\begin{rem}
\label{rem:splice}
An essential torus in $S^{1} \times D^{2} \cut L$ may bound a knotted hole ball. 
Even in such a case, we say that $L$ is a satellite link. 
For example, suppose that $L$ is the knot shown in Figure~\ref{fig2}. 
Then there is an essential torus $T$ in $S^{1} \times D^{2} \cut L$ unique up to isotopy. 
The two JSJ pieces are homeomorphic to 
the complement of the link $T_{1}(0,1)$ in $S^{1} \times D^{2}$ (the Seifert fibered 3-manifold $M(0,3;)$) 
and the complement of the trefoil knot in $S^{3}$ (the Seifert fibered 3-manifold $M(0,1; 1/2,1/3)$). 
\end{rem}

\begin{figure}[ht]
\centerline{\includegraphics[width=3.5in]{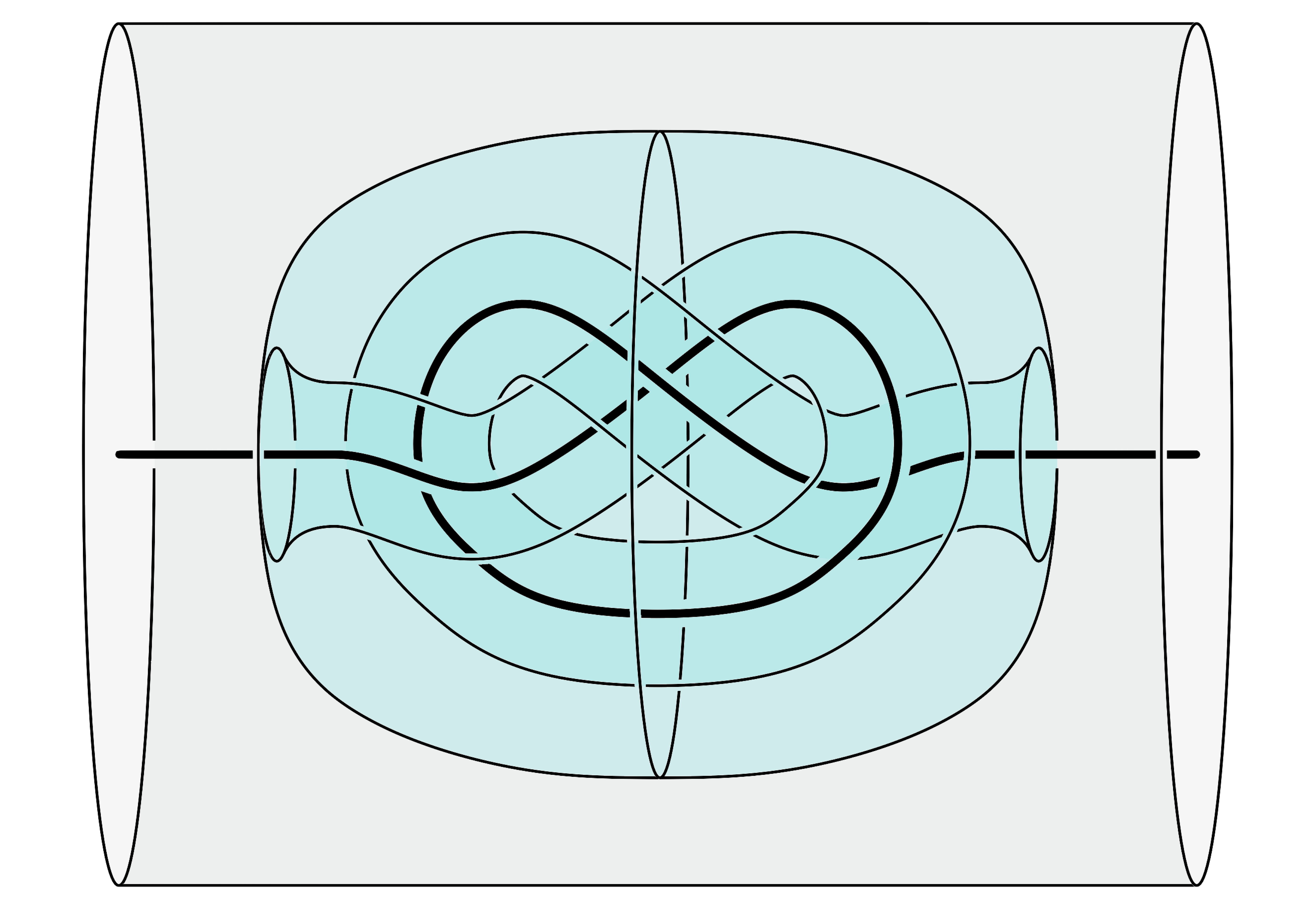}}
\vspace*{8pt}
\caption{\label{fig2} 
A link in the solid torus with an essential torus bounding a knotted hole ball.}
\end{figure}

We show that the outermost JSJ piece can be re-embedded as a link complement 
in the same manner as \cite[Proposition 2.2]{Budney06}. 

\begin{lem}
\label{lem:1-outermost}
Let $L$ be a non-split link in $S^{1} \times D^{2}$. 
Suppose that $M$ is the JSJ piece of $S^{1} \times D^{2} \cut L$ which contains $S^{1} \times \partial D^{2}$, 
called the outermost JSJ piece. 
Then $M$ is homeomorphic to the complement of a link in $S^{1} \times D^{2}$. 
Moreover, this homeomorphism can be taken to fix a neighborhood of $S^{1} \times \partial D^{2}$. 
\end{lem}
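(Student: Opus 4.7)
The plan is to follow the strategy used by \cite{Budney06} in Proposition 2.2 for link complements in $S^{3}$, adapted to the solid torus setting. Enumerate the boundary components of the outermost piece as
\[
\partial M = (S^{1} \times \partial D^{2}) \sqcup \bigsqcup_{j} \partial N(L_{j}) \sqcup \bigsqcup_{i} T_{i},
\]
where the $L_{j}$ are the components of $L$ whose tubular neighborhoods appear in $\partial M$ and the $T_{i}$ are the JSJ tori of $S^{1} \times D^{2} \cut L$ lying in $\partial M$. I will construct a link $L' \subset S^{1} \times D^{2}$ of the form $L' = \bigcup_{j} L_{j} \cup \bigcup_{i} K_{i}$ and produce a homeomorphism $(S^{1} \times D^{2}) \cut L' \cong M$ that is the identity on a neighborhood of $S^{1} \times \partial D^{2}$.

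For each JSJ torus $T_{i}$, Lemma~\ref{lem:1-torus} tells us that the region $B_{i} \subset S^{1} \times D^{2}$ on the side of $T_{i}$ not containing $S^{1} \times \partial D^{2}$ is either a solid torus or a knotted hole ball. In either case $\pi_{1}(B_{i}) \cong \bbZ$, and the inclusion-induced map $\pi_{1}(T_{i}) \to \pi_{1}(B_{i})$ has cyclic kernel generated by a well-defined primitive slope $\mu_{i}$ on $T_{i}$: the meridian of $B_{i}$ when $B_{i}$ is a solid torus, and the slope null-homologous in $B_{i}$ otherwise. I then take $K_{i}$ to be the core of a solid torus $V_{i}$ attached abstractly to $T_{i}$ with meridian $\mu_{i}$; combined with the $L_{j}$, these form $L'$.

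To verify $M \cong (S^{1} \times D^{2}) \cut L'$, I analyze the closed-up 3-manifold $N := M \cup \bigcup_{i} V_{i} \cup \bigcup_{j} N(L_{j})$, which has the single torus boundary $S^{1} \times \partial D^{2}$. A van Kampen computation shows $\pi_{1}(N) \cong \bbZ$: the relations introduced by the fillings --- namely, that each $\mu_{i}$ is killed --- are the same relations arising in the original amalgamation $S^{1} \times D^{2} = M \cup \bigcup_{i} B_{i} \cup \bigcup_{j} N(L_{j})$, since the kernel of $\pi_{1}(T_{i}) \to \pi_{1}(B_{i})$ is already generated by $\mu_{i}$. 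Irreducibility of $N$ follows from irreducibility of $M$ (as a JSJ piece of the non-split link complement $S^{1} \times D^{2} \cut L$) together with irreducibility of each filling solid torus, via a standard innermost-disk argument applied to any essential sphere intersecting the $T_{i}$. An irreducible compact orientable 3-manifold with torus boundary and infinite cyclic fundamental group must be $S^{1} \times D^{2}$, so $N \cong S^{1} \times D^{2}$, and the smallness of the mapping class group of $T^{2}$ (together with Waldhausen's theorem for Haken manifolds) lets us arrange this identification to fix $S^{1} \times \partial D^{2}$ pointwise; restricting to $M$ yields the desired homeomorphism.

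The step I expect to be the main obstacle is the knotted hole ball case. When $B_{i}$ is a solid torus, $V_{i}$ is abstractly homeomorphic to $B_{i}$ and the construction is essentially tautological --- the new embedding literally coincides with the old one on that side. When $B_{i}$ is a knotted hole ball the abstract manifolds $V_{i}$ and $B_{i}$ differ, and one must check carefully that the homologically-natural slope $\mu_{i}$ --- not the more visually apparent meridian of the knotted arc inside the ball --- is the correct choice for the Dehn filling, and that this filling produces $S^{1} \times D^{2}$ rather than, for example, $S^{1} \times S^{2}$ or some other 3-manifold with cyclic $\pi_{1}$. This hinges on a careful verification of both the $\pi_{1}$-computation and the irreducibility of $N$ through the re-embedding.
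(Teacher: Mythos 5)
Your overall strategy (abstractly Dehn fill the boundary tori of $M$ other than $S^{1} \times \partial D^{2}$ and identify the result with $S^{1} \times D^{2}$) could in principle be made to work, but the argument breaks down exactly at the step you flag as the main obstacle, and not in a way that the tools you invoke can repair. For a knotted hole ball $B_{i}$ the claim $\pi_{1}(B_{i}) \cong \bbZ$ is false: $B_{i}$ is homeomorphic to the exterior of the nontrivial knot obtained by closing up the knotted arc, so $\pi_{1}(B_{i})$ is a non-abelian knot group; and since $B_{i}$ is irreducible and not a solid torus, its boundary torus is incompressible, i.e.\ $\pi_{1}(T_{i}) \to \pi_{1}(B_{i})$ is \emph{injective}. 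Hence the kernel is trivial, not generated by $\mu_{i}$, and your van Kampen step --- ``the relations introduced by the fillings are the same relations arising in the original amalgamation'' --- is unfounded: amalgamating $\pi_{1}(M)$ with a knot group along an injectively embedded peripheral torus is a genuinely different quotient from killing the longitude $\mu_{i}$ in $\pi_{1}(M)$, so $\pi_{1}(N) \cong \bbZ$ is not established. The irreducibility of $N$ has the same problem: Dehn filling an irreducible manifold along an arbitrary slope can produce a reducible one, and the innermost-disk argument only reduces the question to excluding essential planar surfaces in $M$ with boundary slope $\mu_{i}$, which you do not address.

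The paper's proof sidesteps all of this geometrically. By Lemma~\ref{lem:1-torus}, the region on the $M$-side of such a $T_{i}$ is $S^{1} \times D^{2}$ minus an open ball with a $1$-handle (a neighborhood of the knotted arc) attached along the boundary sphere; one replaces the knotted $1$-handle by an unknotted one, a re-embedding of this region that is the identity away from the ball, after which $T_{i}$ bounds a solid torus. Proposition~2.1 of \cite{Budney06} guarantees that the various $1$-handles are disjoint, so all the $T_{i}$ can be treated simultaneously, and the re-embedding fixes a neighborhood of $S^{1} \times \partial D^{2}$ for free. If you wish to keep the Dehn-filling formulation, you still need this re-embedding (or an equivalent geometric input) both to certify that $N \cong S^{1} \times D^{2}$ and to confirm that your chosen slope $\mu_{i}$ is the meridian of the resulting solid torus; the purely algebraic route does not close either gap.
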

\begin{proof}
Suppose that $M$ has a boundary component $T$ which bounds a knotted hole ball 
as in Lemma~\ref{lem:1-torus}. 
Then $M$ is contained in the other region bounded by $T$, 
which is obtained by attaching a 1-handle to the complement of a 3-ball in $S^{1} \times D^{2}$. 
By unknotting this 1-handle, 
we can re-embed $M$ in $S^{1} \times D^{2}$ so that $T$ bounds a solid torus as shown in Figure~\ref{fig4}. 
Note that these attached 1-handles are disjoint by \cite[Proposition 2.1]{Budney06}. 
After these operations for all such $T$, 
the manifold $M$ is re-embedded in $S^{1} \times D^{2}$ 
so that $S^{1} \times D^{2} \setminus M$ is a tubular neighborhood of links in $S^{1} \times D^{2}$. 
\end{proof}

\begin{figure}[ht]
\centerline{\includegraphics[width=5in]{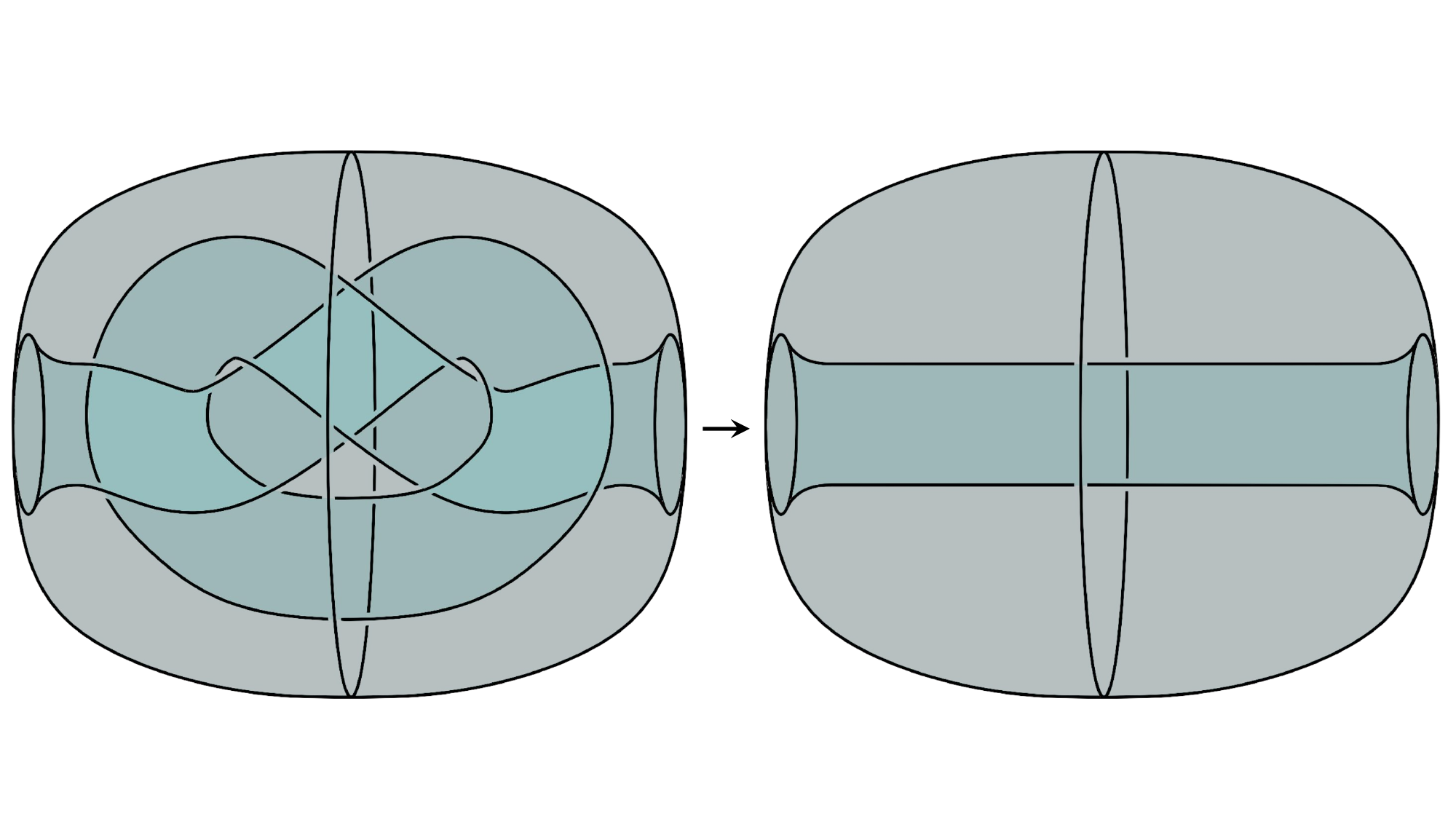}}
\vspace*{8pt}
\caption{\label{fig4} 
Re-embedding of a JSJ piece $M$.}
\end{figure}

The classification of the Seifert fibered submanifold of $S^{3}$ in \cite[Proposition 3.2]{Budney06} 
implies the following classification. 

\begin{prop}
\label{prop:1-jsj}
Let $L$ be a non-split link in $S^{1} \times D^{2}$. 
Suppose that a JSJ piece $M$ of $S^{1} \times D^{2} \cut L$ is Seifert fibered. 
Then $M$ is homeomorphic to one of the following: 
\begin{itemize}
\item $M(0, n; \alpha_{1} / \beta_{1}, \alpha_{2} / \beta_{2})$ 
	for $n \geq 1$ and $\alpha_{1} \beta_{2} - \alpha_{2} \beta_{1} = \pm 1$, 
\item $M(0, n; \alpha / \beta)$ for $n \geq 2$, or 
\item $M(0, n; )$ for $n \geq 2$. 
\end{itemize}
The Seifert fibration of $M$ is unique up to isotopy 
unless $M$ is homeomorphic to the thickened torus $M(0,2;)$. 
This exception occurs only when $L$ is isotopic to $T_{1}(0,0)$. 
\end{prop}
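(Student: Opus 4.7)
The plan is to combine Budney's classification of Seifert fibered submanifolds of $S^{3}$ in \cite[Proposition 3.2]{Budney06} with Proposition~\ref{prop:unique-seifert}. First, I regard $M$ as a submanifold of $S^{3}$ via the standard embedding $S^{1}\times D^{2} \hookrightarrow S^{3}$, so that $M$ becomes a compact Seifert fibered submanifold of $S^{3}$ with non-empty torus boundary. Budney's classification then produces the three possible homeomorphism types listed in the statement, initially with the weaker bounds $n\geq 1$ in every case.

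To sharpen the bound to $n\geq 2$ in the second and third cases, I would rule out $n=1$. Both $M(0,1;\alpha/\beta)$ and $M(0,1;)$ are homeomorphic to the solid torus, so $\partial M$ is a single torus compressing into $M$ via its meridian disk. Hence $\partial M$ is compressible in $S^{1}\times D^{2} \cut L$, and therefore cannot be a JSJ torus. Consequently $\partial M$ must lie in $\partial(S^{1}\times D^{2}\cut L)$, forcing $M$ to be the entire complement. Since $L$ is non-split, $S^{1}\times D^{2}\cut L$ is irreducible, and a short Heegaard-splitting (or winding-number) check shows no knot in $S^{1}\times D^{2}$ has a solid torus complement.

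For the uniqueness of the fibration I would apply Proposition~\ref{prop:unique-seifert}. Its exceptions are the solid torus (already excluded by the previous step), the twisted $I$-bundle over the Klein bottle $M(0,1;1/2,1/2)$ (which satisfies $1\cdot 2-1\cdot 2=0\neq\pm 1$, and hence does not appear in our first case), and the thickened torus $M(0,2;)$. Thus the only remaining exception is $M\cong M(0,2;)\cong T^{2}\times I$.

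Finally, I would show that if such an $M\cong T^{2}\times I$ arises as a JSJ piece, then $M$ is the whole complement and $L$ is isotopic to $T_{1}(0,0)$. Because $T^{2}\times I$ is a product, any JSJ torus on $\partial M$ would be parallel, through $M$, to the other boundary component of $M$. If the other boundary is also a JSJ torus, this contradicts the minimality of the JSJ collection, while if it lies in $\partial(S^{1}\times D^{2}\cut L)$, this contradicts the fact that a JSJ torus is not boundary-parallel. Hence both boundary tori of $M$ are components of $\partial(S^{1}\times D^{2}\cut L)$, so $M=S^{1}\times D^{2}\cut L$, and the standard fact that the core is the unique knot in $S^{1}\times D^{2}$ with $T^{2}\times I$ complement identifies $L$ with $T_{1}(0,0)$. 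The main obstacle I foresee is pinning down this last uniqueness claim, which requires a careful Heegaard-splitting argument rather than a purely formal appeal.
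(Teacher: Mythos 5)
Your argument is correct and takes essentially the same route as the paper, which gives no proof beyond citing Budney's Proposition 3.2; your write-up simply supplies the details. Two simplifications you could make: a solid-torus JSJ piece is excluded immediately because it would have to be the whole exterior, which has at least two boundary tori (no Heegaard-splitting or winding-number check needed), and the final identification of $L$ with $T_{1}(0,0)$ also needs no extra geometric argument, since Proposition~\ref{prop:1-seifert} already forces a link with Seifert fibered complement to be some $T_{0}(p,q)$ or $T_{1}(p,q)$, of which only $T_{1}(0,0)$ (and the isotopic $T_{0}(1,q)$) has exterior $M(0,2;)$.
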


\subsection{Links in the thickened torus}

In this subsection, we consider links in the thickened torus $T^{2} \times I$. 
We first define elementary links in the thickened torus. 
Let $T^{2} = \bbR^{2} / \bbZ^{2}$ and $I = [0,1]$. 
For $(p,q) \in \bbZ^{2} \setminus \{ (0,0) \}$, define 
\[
T_{2}(p,q) = \{ \left( p^{\prime} t, q^{\prime}t + \frac{k}{p}, \frac{1}{2} \right) \in T^{2} \times I \mid t \in [0,1], k = 0, \dots, d-1 \}, 
\]
where $d = \gcd(p,q)$, $p^{\prime} = p/d$, and $q^{\prime} = q/d$. 
The link $T_{2}(p,q)$ consists of $d$ sliding copies of $T_{2}(p^{\prime}, q^{\prime})$. 
Since the links are not oriented, 
the links $T_{2}(p,q)$ and $T_{2}(-p,-q)$ are identical. 
We show that the other pairs in $T_{2}(p,q)$ are not isotopic.

\begin{lem}
\label{lem:2-t-link}
Suppose that $p_{0}, p_{1} \geq 0$. 
If $T_{2}(p_{0}, q_{0})$ and $T_{2}(p_{1}, q_{1})$ are isotopic, 
then $p_{0} = p_{1}$ and $q_{0} = q_{1}$. 
\end{lem}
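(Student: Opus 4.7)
The plan is to distinguish the links $T_{2}(p,q)$ by two elementary invariants of an unoriented link in the thickened torus: the number of components, and the homology class of each component in $H_{1}(T^{2} \times I; \bbZ) \cong \bbZ^{2}$, taken up to sign. First, directly from the defining parametrization the link $T_{2}(p,q)$ has exactly $d := \gcd(p,q)$ components (with the convention $\gcd(0,q) = |q|$). So an ambient isotopy between $T_{2}(p_{0}, q_{0})$ and $T_{2}(p_{1}, q_{1})$ forces $\gcd(p_{0}, q_{0}) = \gcd(p_{1}, q_{1})$; call this common value $d$ and set $p_{i}' = p_{i}/d$, $q_{i}' = q_{i}/d$.

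Second, any ambient isotopy of $T^{2} \times I$ is realized by a self-homeomorphism isotopic to the identity, and hence acts trivially on $H_{1}(T^{2} \times I; \bbZ) \cong \bbZ^{2}$. If $F$ is such an isotopy sending $T_{2}(p_{0},q_{0})$ onto $T_{2}(p_{1},q_{1})$, then for every component $K$ of the first link its image $F(K)$ is a component of the second with $[F(K)] = \pm [K]$ in $H_{1}(T^{2} \times I; \bbZ)$, the ambiguity of sign arising because the links are unoriented. A short computation with the parametrization shows that each component of $T_{2}(p_{i}, q_{i})$ represents $\pm (p_{i}', q_{i}')$ in $H_{1}(T^{2} \times I; \bbZ) \cong \bbZ^{2}$. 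Therefore $(p_{0}', q_{0}') = \pm (p_{1}', q_{1}')$.

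Finally, I would resolve the sign ambiguity using the hypothesis $p_{0}, p_{1} \geq 0$. If $p_{0}' > 0$, then $p_{1}' > 0$ and the sign must be $+$, giving $(p_{0}', q_{0}') = (p_{1}', q_{1}')$ and hence $p_{0} = p_{1}$ and $q_{0} = q_{1}$ after multiplying by $d$. In the remaining degenerate case $p_{0} = p_{1} = 0$, both links consist of $d = |q_{0}| = |q_{1}|$ parallel longitudes in the second coordinate direction, and the identity $T_{2}(0,q) = T_{2}(0,-q)$ (a consequence of $T_{2}(p,q) = T_{2}(-p,-q)$, already noted before the lemma) absorbs the sign, yielding $q_{0} = q_{1}$ as well.

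The argument presents no serious obstacle: everything is forced by the action on $H_{1}$ together with a component count. The only minor subtlety is the bookkeeping in the degenerate case $p_{i} = 0$, which reflects a built-in symmetry of the parametrization rather than a genuine topological issue.
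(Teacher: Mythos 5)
Your proof is correct and follows essentially the same route as the paper's: count components to get $\gcd(p_{0},q_{0})=\gcd(p_{1},q_{1})$, then use the class $\pm(p_{i}',q_{i}')$ of each component in $H_{1}(T^{2}\times I;\bbZ)\cong\bbZ^{2}$ (the paper phrases this via $\pi_{1}$) together with the normalization $p_{i}\geq 0$. Your extra attention to the sign ambiguity in the degenerate case $p_{0}=p_{1}=0$ addresses a detail the paper's proof silently elides.
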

\begin{proof}
For $i=0,1$, let $d_{i} = \gcd(p_{i}, q_{i})$, $p_{i}^{\prime} = p_{i} / d_{i}$, and $q_{i}^{\prime} = q_{i} / d_{i}$. 
Since the number of component of the link $T_{2}(p_{i}, q_{i})$ is $d_{i}$, 
we have $d_{0} = d_{1}$. 
Since any component $T_{2}(p_{i}, q_{i})$ represents 
$\pm (p_{i}^{\prime}, q_{i}^{\prime}) \in \bbZ^{2} = \pi_{1}(T^{2} \times I)$, 
we have $p_{0}^{\prime} = p_{1}^{\prime}$ and $q_{0}^{\prime} = q_{1}^{\prime}$. 
Hence $p_{0} = p_{1}$ and $q_{0} = q_{1}$. 
\end{proof}

The complement of $T_{2}(p,q)$ is homeomorphic to 
the Seifert fibered manifold $M(0, \gcd(p,q)+2;)$. 
The complement of a link in the thickened torus is homeomorphic to 
the complement of a link in the 3-sphere two of whose components form the Hopf link. 
The classification in \cite{BM70} implies the following classification of Seifert fibered links. 

\begin{prop}
\label{prop:2-seifert}
Let $L$ be a link in $T^{2} \times I$. 
Suppose that $T^{2} \times I \cut L$ is Seifert fibered. 
Then $L$ is isotopic to $T_{2}(p,q)$. 
\end{prop}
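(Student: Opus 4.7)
The plan is to mirror the proof of Proposition~\ref{prop:1-seifert} for the solid torus. First I would identify $T^{2} \times I$ with the exterior $S^{3} \setminus N(H)$ of a Hopf link $H = H_{0} \cup H_{1} \subset S^{3}$, matching the two boundary tori $T^{2} \times \{0\}$ and $T^{2} \times \{1\}$ with $\partial N(H_{0})$ and $\partial N(H_{1})$. A link $L \subset T^{2} \times I$ then corresponds to a link $L \cup H \subset S^{3}$ whose complement is $T^{2} \times I \cut L$, so the hypothesis that $T^{2} \times I \cut L$ is Seifert fibered translates directly to the statement that $L \cup H$ is a Seifert fibered link in $S^{3}$.

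By the Burde–Murasugi classification of Seifert fibered links in $S^{3}$ (\cite{BM70}, see also \cite[Proposition 3.3]{Budney06}), the link $L \cup H$ is a union of fibers of some Seifert fibration of $S^{3}$. Restricting this ambient fibration to the exterior of $H$ alone yields a Seifert fibration of $T^{2} \times I$ in which the components of $L$ are still fibers. By Proposition~\ref{prop:unique-seifert}, every Seifert fibration of $T^{2} \times I$ is isomorphic to the product fibration $M(0,2;)$, so after applying a fiber-preserving self-homeomorphism I may assume the fibers are parallel $(p,q)$-curves on the level tori $T^{2} \times \{t\}$ for some coprime pair $(p,q)$.

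With this picture in place, the $d = |L|$ components of $L$ appear as $d$ parallel $(p,q)$-curves spread across various level tori. A vertical isotopy supported in $T^{2} \times I$ slides all of them to the torus $T^{2} \times \{1/2\}$ as equispaced translates of a single $(p,q)$-curve, which is by definition the link $T_{2}(dp, dq)$. The main obstacle I anticipate is the coordinate bookkeeping needed to verify that the identification of $T^{2} \times I$ with $S^{3} \setminus N(H)$, together with the fiber-preserving self-homeomorphism above, respects the affine framing entering the definition of $T_{2}(p,q)$; however, all the genuinely difficult topology has already been absorbed into the Burde–Murasugi theorem and the uniqueness statement in Proposition~\ref{prop:unique-seifert}, so the remaining argument is essentially a translation.
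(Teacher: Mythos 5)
Your proposal is correct and follows essentially the same route as the paper, which simply observes that $T^{2} \times I \cut L$ is the complement of the link $L \cup H$ in $S^{3}$ (with $H$ the Hopf link) and invokes the Burde--Murasugi classification of Seifert fibered links in $S^{3}$. The extra steps you supply (restricting the ambient fibration to the Hopf link exterior and using the uniqueness of the Seifert fibration on $T^{2} \times I$ from Proposition~\ref{prop:unique-seifert} to straighten the fibers into parallel $(p,q)$-curves on level tori) are exactly the details the paper leaves implicit.
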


Suppose that $L$ is a non-split link in $T^{2} \times I$. 
Let us consider the JSJ decomposition of $T^{2} \times I \cut L$. 
We classify tori in the thickened torus. 

\begin{lem}
\label{lem:2-torus}
Let $T$ be an embedded torus in $T^{2} \times I$. 
Then 
\begin{itemize}
\item $T$ is parallel to the boundary of $T^{2} \times I$, 
\item $T$ bounds a solid torus, or 
\item $T$ bounds the connected sum of a solid torus and a thickened torus. 
\end{itemize}
In the first case, we say that $T$ is a \emph{layering torus}. 
In the last case, $T$ bounds a solid torus or a knotted hole ball. 
\end{lem}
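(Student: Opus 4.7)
The plan is to mirror the structure of the proof of Lemma~\ref{lem:1-torus}, with one extra case arising from the fact that $\pi_{1}(T^{2} \times I) \cong \bbZ \times \bbZ$ now admits a copy of $\pi_{1}(T)$. The argument splits into two cases according to whether $T$ is compressible in $T^{2} \times I$.

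If $T$ is incompressible, I would conclude that $T$ is a layering torus by appealing to the classification of incompressible surfaces in a Seifert fibered manifold: by Proposition~\ref{prop:unique-seifert}, the product fibration $M(0,2;)$ gives $T^{2} \times I$ the structure of an $S^{1}$-bundle over the annulus $S^{1} \times I$, and any incompressible torus in a bounded Seifert fibered space is isotopic either to a vertical torus saturated by regular fibers over an essential simple closed curve in the base, or to a horizontal torus transverse to the fibers. A horizontal torus would have to be a finite cover of the annulus base, which is impossible since a closed torus cannot cover a surface with nonempty boundary. Hence $T$ is vertical, and since the essential simple closed curves in the annulus are all isotopic to the core, the preimage $T$ is isotopic to $T^{2} \times \{t\}$ for some $t \in (0,1)$.

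If $T$ is compressible, I would take a compressing disk $D$ and compress $T$ to obtain a 2-sphere $S$. Since $T^{2} \times I$ is irreducible, $S$ bounds a ball $B$. Letting $N(D)$ denote the 1-handle dual to the compression, the analysis then proceeds exactly as in Lemma~\ref{lem:1-torus}. If $B$ and $D$ are disjoint, then $T$ bounds the solid torus $B \cup N(D)$ (a ball with a 1-handle attached along its boundary sphere). If $D \subset B$, then on one side $T$ bounds $\overline{(T^{2} \times I) \setminus B} \cup N(D)$, namely $T^{2} \times I$ with a ball removed followed by the attachment of a 1-handle to the resulting sphere; this is homeomorphic to $(T^{2} \times I) \# (S^{1} \times D^{2})$, the connected sum of a thickened torus and a solid torus. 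The other side is $B \setminus N(D)$, a 3-ball with a thickened arc (whose endpoints lie on $\partial B$) removed, which is a solid torus when the arc is unknotted in $B$ and a knotted hole ball otherwise.

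The hard part will be the incompressible case: while its conclusion (boundary parallelism) is well known, writing a self-contained proof requires invoking a classification theorem for incompressible surfaces either in Seifert fibered spaces (as above) or in Haken manifolds (via Waldhausen). The compressible case is a direct adaptation of the proof of Lemma~\ref{lem:1-torus}; the only new wrinkle is that the ambient manifold is $T^{2} \times I$ rather than $S^{1} \times D^{2}$, so the ``big'' piece in the connected-sum description of one side of $T$ picks up a thickened-torus factor in place of a second solid torus.
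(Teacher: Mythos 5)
Your proof is correct and follows essentially the same route as the paper: in the incompressible case both arguments invoke the vertical/horizontal classification of incompressible surfaces in the Seifert fibered space $T^{2}\times I$ (ruling out the horizontal case because $T$ is closed while the base annulus has boundary), and in the compressible case both compress $T$ to a sphere, use irreducibility to get a ball $B$, and split into the two sub-cases $B\cap D=\emptyset$ and $D\subset B$ exactly as in Lemma~\ref{lem:1-torus}. No gaps.
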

\begin{proof}
Fix a Seifert fibration of $T^{2} \times I$, which has no singular fibers. 
If $T$ is incompressible, 
then $T$ is isotopic to be vertical (a union of regular fibers) 
or horizontal (transverse to all fibers) in the Seifert fibration 
by \cite[Proposition 1.11]{Hatcher07}. 
Since $T$ is closed, $T$ is vertical. 
Hence $T$ is parallel to the boundary of $T^{2} \times I$. 
If $T$ is compressible, 
then $T$ bounds a solid torus or a knotted hole ball 
in the same manner as Lemma~\ref{lem:1-torus}. 
\end{proof}

We say that $L$ is a \emph{layered link} 
if $T^{2} \times I \cut L$ has a layering essential torus. 
We say that $L$ is \emph{satellite link} 
if $T^{2} \times I \cut L$ has an essential torus 
which is not layering. 
Then the following classification of links in the solid torus 
clearly holds by Proposition~\ref{prop:2-seifert}. 

\begin{prop}
\label{prop:2-class}
Suppose that $L$ is a non-split link in $T^{2} \times I$. 
Then $L$ is one of the following: 
\begin{itemize}
\item $T_{2}(p,q)$, 
\item a hyperbolic link, 
\item a satellite link, or 
\item a layered link. 
\end{itemize}
\end{prop}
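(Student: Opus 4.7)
The plan is to split the analysis of the complement $M = T^{2} \times I \cut L$ into whether it contains essential tori or not, and then invoke the classifications established earlier in the section. Since $L$ is non-split and $T^{2} \times I$ is irreducible, $M$ is irreducible, so the JSJ decomposition applies to $M$. Note that $\partial M$ is a disjoint union of tori (the two boundary tori of $T^{2} \times I$ together with the tubular neighborhood boundaries of the components of $L$).

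First, suppose that $M$ is atoroidal. Then $M$ is a single JSJ piece, so it is either Seifert fibered or hyperbolic. If $M$ is Seifert fibered, then Proposition~\ref{prop:2-seifert} gives $L \simeq T_{2}(p,q)$ for some $(p,q)$. Otherwise, since $M$ is irreducible, atoroidal, has non-empty toroidal boundary, and is not $T^{2} \times I$ itself (as $L$ is non-empty, so $M$ has a boundary component coming from $N(L)$ in addition to $\partial(T^{2} \times I)$), Thurston's hyperbolization theorem implies that the interior of $M$ admits a finite-volume hyperbolic structure, so $L$ is a hyperbolic link by definition.

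Next, suppose that $M$ contains an essential torus $T$. By Lemma~\ref{lem:2-torus}, every embedded torus in $T^{2} \times I$ falls into one of three classes: it is a layering torus (parallel to $\partial(T^{2} \times I)$ in $T^{2} \times I$), it bounds a solid torus, or it bounds the connected sum of a solid torus and a thickened torus (in which case the compressible side is a solid torus or a knotted hole ball). If $T$ is layering, then $L$ is a layered link by definition. If $T$ is not layering, then $L$ is a satellite link by definition. This exhausts the possibilities for an essential torus in $M$.

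The substantive content reduces to Proposition~\ref{prop:2-seifert}, Lemma~\ref{lem:2-torus}, and Thurston's hyperbolization, which are all available; the only delicate point is verifying that in the atoroidal case we do not accidentally land in the excluded class $M \simeq T^{2} \times I$. This is handled by observing that $\partial M$ strictly contains $\partial(T^{2} \times I)$ whenever $L$ is non-empty, so $M$ cannot be homeomorphic to $T^{2} \times I$. Note also that the four listed categories are not required to be mutually exclusive, which simplifies the argument: one does not need to worry, for example, about whether a given $T_{2}(p,q)$ also admits an essential torus of layering type.
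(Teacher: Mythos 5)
Your proof is correct and takes essentially the same route the paper intends: the paper gives no written proof beyond asserting the statement "clearly holds by Proposition~\ref{prop:2-seifert}," and your argument simply fills in the intended dichotomy (atoroidal complement, split into Seifert fibered versus hyperbolic, versus an essential torus, split into layering versus non-layering via Lemma~\ref{lem:2-torus}). The extra care you take to rule out $M \simeq T^{2} \times I$ in the hyperbolization step is a sensible detail the paper leaves implicit.
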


If $L$ is a layered link in $T^{2} \times I$, 
then $L$ splits to non-layered links along layering incompressible tori. 
Note that $T_{2}(p,q)$ with $\gcd(p,q) > 1$ is also satellite and layered. 
The following lemma is proven 
in the same manner as Lemma~\ref{lem:1-outermost}. 

\begin{lem}
\label{lem:2-outermost}
Let $L$ be a non-split and non-layered link in $T^{2} \times I$. 
Suppose that $M$ is the JSJ piece of $T^{2} \times I \cut L$ which contains a component of $T^{2} \times \partial I$, 
called the outermost JSJ piece. 
Then $M$ is homeomorphic to the complement of a link in $T^{2} \times I$. 
Moreover, this homeomorphism can be taken to fix a neighborhood of $T^{2} \times \partial I$. 
\end{lem}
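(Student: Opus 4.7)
The plan is to mirror the proof of Lemma~\ref{lem:1-outermost}, replacing the torus classification of Lemma~\ref{lem:1-torus} with the thickened-torus version in Lemma~\ref{lem:2-torus}. The boundary of the outermost JSJ piece $M$ splits into two kinds of components: the components inherited from $T^{2} \times \partial I$, and the JSJ tori of $T^{2} \times I \cut L$ that are adjacent to $M$. Since $L$ is non-layered, no JSJ torus is a layering torus, so Lemma~\ref{lem:2-torus} reduces the second kind to two sub-cases for each such $T$: either $T$ bounds a solid torus on the side opposite $M$, or $T$ bounds a connected sum of a solid torus and a thickened torus on $M$'s side, with the opposite side being a solid torus or a knotted hole ball sitting inside a 3-ball in $T^{2} \times I$. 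Since $M$ contains a boundary component of $T^{2}\times I$, $M$ cannot lie inside a solid torus or knotted hole ball, so $M$ always occupies the ``large'' side of each such $T$.

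Next, for each boundary component $T$ of $M$ whose opposite side is a knotted hole ball, I would apply the same unknotting argument as in Lemma~\ref{lem:1-outermost}: $M$ sits in a region obtained from $T^{2}\times I$ by removing a 3-ball $B$ and attaching back a possibly knotted 1-handle, and I can re-embed this region into $T^{2}\times I$ so that the 1-handle becomes unknotted, making $T$ bound a solid torus after the re-embedding. By \cite[Proposition 2.1]{Budney06}, the 1-handles associated to different boundary tori of $M$ of this type can be chosen to be pairwise disjoint, so the re-embeddings can be carried out simultaneously. Each modification is supported in one of the balls $B$, which may be chosen disjoint from a chosen collar neighborhood of $T^{2}\times\partial I$; therefore the resulting homeomorphism fixes such a collar.

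After these re-embeddings, every boundary torus of $M$ other than a component of $T^{2}\times\partial I$ bounds a solid torus in $T^{2}\times I$, so $T^{2}\times I \setminus \mathrm{int}(M)$ is a disjoint union of a collar of $T^{2}\times\partial I$ inside $M^{c}$ (in the case that $M$ contains only one side of the boundary) together with solid tori; equivalently, $M$ is the complement in $T^{2}\times I$ of a link $L'$ whose components are the cores of these solid tori. The main potential obstacle is ensuring that $M$ really lies on the ``large'' side of every JSJ torus of its boundary (so that the unknotting procedure applies in the right direction) and that the simultaneous unknottings can be arranged disjointly from each other and from a collar of $T^{2}\times\partial I$; both issues are handled exactly as in the solid torus case, the first by the $\pi_{1}$-based obstruction noted above and the second by Budney's disjointness result together with the locality of the unknotting moves inside the balls $B$.
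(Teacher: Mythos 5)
Your proposal is correct and follows essentially the same route as the paper, which proves this lemma by explicitly declaring it ``proven in the same manner as Lemma~\ref{lem:1-outermost}'': use the non-layered hypothesis to rule out layering JSJ tori via Lemma~\ref{lem:2-torus}, then unknot the 1-handles of the knotted hole balls disjointly using \cite[Proposition 2.1]{Budney06}, supported away from a collar of $T^{2}\times\partial I$. (One small remark: since $L$ is non-layered, both components of $T^{2}\times\partial I$ necessarily lie in the same outermost piece $M$, so your parenthetical about $M$ containing only one side of the boundary is vacuous.)
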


The classification in \cite[Proposition 3.2]{Budney06} implies the following classification. 

\begin{prop}
\label{prop:2-jsj}
Suppose that $L$ is a non-split link in $T^{2} \times I$. 
Let us consider the JSJ decomposition of $T^{2} \times I \cut L$. 
If a JSJ piece $M$ is Seifert fibered, 
then $M$ is homeomorphic to one of the following: 
\begin{itemize}
\item $M(0, n; \alpha_{1} / \beta_{1}, \alpha_{2} / \beta_{2})$ 
	for $n \geq 1$ and $\alpha_{1} \beta_{2} - \alpha_{2} \beta_{1} = \pm 1$, 
\item $M(0, n; \alpha / \beta)$ for $n \geq 2$, or 
\item $M(0, n; )$ for $n \geq 3$. 
\end{itemize}
The Seifert fibration of $M$ is unique up to isotopy. 
\end{prop}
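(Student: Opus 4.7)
The plan is to reduce the statement to Budney's classification of Seifert fibered JSJ pieces of link complements in $S^{3}$ (\cite{Budney06}, Proposition 3.2), and then to verify the specific lower bounds on $n$ that arise in the thickened torus setting.

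I would begin by identifying $T^{2} \times I$ with the complement of an open tubular neighborhood of a Hopf link $H$ in $S^{3}$. Under this identification, $T^{2} \times I \cut L = S^{3} \cut (L \cup H)$, and since the JSJ decomposition is intrinsic to the 3-manifold, the JSJ pieces coincide. Hence every Seifert fibered JSJ piece $M$ of $T^{2} \times I \cut L$ is also a Seifert fibered JSJ piece of the link complement $S^{3} \cut (L \cup H)$. By Budney's classification, $M$ is homeomorphic to one of $M(0, n; \alpha_{1}/\beta_{1}, \alpha_{2}/\beta_{2})$ with $\alpha_{1} \beta_{2} - \alpha_{2} \beta_{1} = \pm 1$, or $M(0, n; \alpha/\beta)$, or $M(0, n; )$, for some $n \geq 1$.

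Next, I would verify the stated lower bounds on $n$. The key observation is that each boundary torus of $M$ is either a JSJ torus of $T^{2} \times I \cut L$ or a component of $T^{2} \times \partial I$, and in either case it is incompressible in $T^{2} \times I \cut L$. Since $M(0, 1; \alpha/\beta) \cong S^{1} \times D^{2}$ has compressible boundary, it cannot be a JSJ piece, forcing $n \geq 2$ in the second form. For the third form, $M(0, 1; ) \cong S^{1} \times D^{2}$ is excluded for the same reason, and the case $M(0, 2; ) \cong T^{2} \times I$ requires a separate argument: if both boundary tori of $M$ lie in $T^{2} \times \partial I$, then $L$ is empty, contradicting non-emptiness; if exactly one is a JSJ torus, it would be boundary-parallel through $M$ in $T^{2} \times I \cut L$, contradicting the non-boundary-parallelism of JSJ tori; and if both are JSJ tori, they would be parallel through $M$, violating the minimality of the JSJ decomposition. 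Hence $n \geq 3$ for the third form. For the first form with $n = 1$, the manifold $M$ is a torus knot exterior whose boundary is incompressible, so no further restriction is needed.

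Finally, for uniqueness of the Seifert fibration, I would invoke Proposition~\ref{prop:unique-seifert}, which asserts uniqueness unless $M$ is $S^{1} \times D^{2}$, $T^{2} \times I$, or the twisted $I$-bundle $S^{1} \tilde{\times} S^{1} \tilde{\times} I$ over the Klein bottle. The first two are excluded by the lower bounds established above. The third corresponds to $M(-1, 1; )$ (non-orientable base) and to $M(0, 1; 1/2, 1/2)$ (for which $\alpha_{1} \beta_{2} - \alpha_{2} \beta_{1} = 0$), and neither fits any of the three listed forms, so it does not arise. The main obstacle, and the step that differs most from the analogous proof for the solid torus (Proposition~\ref{prop:1-jsj}), is the bound $n \geq 3$ for the form $M(0, n; )$: excluding $n = 2$ requires the combined use of incompressibility, non-boundary-parallelism, and minimality of JSJ tori to rule out $T^{2} \times I$ as a proper JSJ piece.
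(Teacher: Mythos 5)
Your proposal is correct and follows essentially the same route as the paper, which proves this proposition simply by citing Budney's Proposition 3.2 via the identification of $T^{2} \times I \cut L$ with the complement of $L$ together with a Hopf link in $S^{3}$. Your additional verifications --- ruling out $M(0,1;\alpha/\beta)$ and $M(0,2;)$ using incompressibility, non-boundary-parallelism, and minimality of the JSJ tori, and checking that the twisted $I$-bundle over the Klein bottle matches none of the listed forms --- are exactly the details the paper leaves implicit, and they are carried out correctly.
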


\subsection{Links in the 3-torus}

In this subsection, we consider links in the 3-torus $T^{3}$. 
We first define elementary links in the 3-torus. 
Let $T^{3} = \bbR^{3} / \bbZ^{3}$. 
For $(p,q,r) \in \bbZ^{3} \setminus \{ (0,0,0) \}$, define 
\[
T_{3}(p,q,r) = \{ (p^{\prime} t, q^{\prime} t, r^{\prime} t + \frac{k}{\gcd(p,q)}) \in T^{3} \mid t \in [0,1], k = 0, \dots, d-1 \}, 
\]
where $d = \gcd(p,q,r)$, $p^{\prime} = p/d$, $q^{\prime} = q/d$, and $r^{\prime} = r/d$. 
The link $T_{3}(p,q,r)$ consists of $d$ slided copies of $T_{3}(p^{\prime}, q^{\prime}, r^{\prime})$. 
Since the links are not oriented, 
the links $T_{3}(p,q,r)$ and $T_{3}(-p,-q,-r)$ are identical. 
We show that the other pairs in $T_{3}(p,q,r)$ are not isotopic.

\begin{lem}
\label{lem:3-t-link}
Suppose that $p_{0}, p_{1} \geq 0$. 
If $T_{3}(p_{0}, q_{0}, r_{0})$ and $T_{3}(p_{1}, q_{1}, r_{1})$ are isotopic, 
then $p_{0} = p_{1}$, $q_{0} = q_{1}$, and $r_{0} = r_{1}$. 
\end{lem}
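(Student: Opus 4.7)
The plan is to mimic verbatim the argument used for Lemma~\ref{lem:2-t-link}, replacing $\pi_1(T^2 \times I) \cong \mathbb{Z}^2$ with $\pi_1(T^3) \cong H_1(T^3, \mathbb{Z}) \cong \mathbb{Z}^3$. For $i = 0, 1$, set $d_i = \gcd(p_i, q_i, r_i)$, $p'_i = p_i/d_i$, $q'_i = q_i/d_i$, and $r'_i = r_i/d_i$.

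The first step is to count components. By construction, the index $k$ in the definition of $T_3(p_i, q_i, r_i)$ ranges over $0, \ldots, d_i - 1$, producing $d_i$ pairwise disjoint closed geodesics in $T^3$. Hence the number of components of $T_3(p_i, q_i, r_i)$ equals $d_i$. Since isotopic links have equal component counts, $d_0 = d_1$.

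The second step is to compute the homology class of an individual component. A single component of $T_3(p_i, q_i, r_i)$, parametrised by $t \in [0,1]$, lifts in the universal cover $\mathbb{R}^3$ to the straight segment from $(0, 0, k/\gcd(p_i, q_i))$ to $(p'_i, q'_i, r'_i + k/\gcd(p_i, q_i))$; as a homology class in $H_1(T^3, \mathbb{Z}) = \mathbb{Z}^3$, its unoriented class is therefore $\pm (p'_i, q'_i, r'_i)$. Isotopic unoriented loops in $T^3$ represent the same homology class up to sign, so $(p'_0, q'_0, r'_0) = \pm (p'_1, q'_1, r'_1)$. The normalization $p_0, p_1 \geq 0$ (together with the identity $T_3(p,q,r) = T_3(-p,-q,-r)$, used to absorb any remaining sign ambiguity when $p_i = 0$) then forces $(p'_0, q'_0, r'_0) = (p'_1, q'_1, r'_1)$. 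Combining with $d_0 = d_1$ gives $p_0 = p_1$, $q_0 = q_1$, and $r_0 = r_1$.

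I expect no serious obstacle: all the ingredients (component count from the definition, homology class from a single straight lift, and the sign normalization) are direct copies of what was done for $T_2(p,q)$. The one mildly delicate point is the residual sign ambiguity in the degenerate case $p_i = 0$, which should be handled in exactly the same compact way as in the proof of Lemma~\ref{lem:2-t-link}.
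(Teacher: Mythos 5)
Your proposal is correct and follows essentially the same route as the paper's proof: count components to get $d_0 = d_1$, then compare the unoriented class $\pm(p_i', q_i', r_i')$ of a single component in $\pi_1(T^3) \cong \bbZ^3$ and use the normalization $p_i \geq 0$ to fix the sign. The extra detail you supply (the explicit lift to $\bbR^3$ and the remark about the residual sign ambiguity when $p_i = 0$) only makes explicit what the paper leaves implicit.
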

\begin{proof}
For $i=0,1$, let $d_{i} = \gcd(p_{i}, q_{i}, r_{i})$, $p_{i}^{\prime} = p_{i} / d_{i}$, $q_{i}^{\prime} = q_{i} / d_{i}$, and $r_{i}^{\prime} = r_{i} / d_{i}$. 
Since the number of component of the link $T_{3}(p_{i}, q_{i}, r_{i})$ is $d_{i}$, 
we have $d_{0} = d_{1}$. 
Since any component $T_{3}(p_{i}, q_{i}, r_{i})$ represents 
$\pm (p_{i}^{\prime}, q_{i}^{\prime}, r_{i}^{\prime}) \in \bbZ^{3} = \pi_{1}(X)$, 
we have $p_{0}^{\prime} = p_{1}^{\prime}$, $q_{0}^{\prime} = q_{1}^{\prime}$, and $r_{0}^{\prime} = r_{1}^{\prime}$. 
Hence $p_{0} = p_{1}$, $q_{0} = q_{1}$, and $r_{0} = r_{1}$. 
\end{proof}

The complement of $T_{3}(p,q,r)$ is homeomorphic to the Seifert fibered manifold $M(1, \gcd(p,q,r);)$. 

\begin{prop}
\label{prop:3-seifert}
Let $L$ be a link in $T^{3}$. 
Suppose that $T^{3} \cut L$ is Seifert fibered. 
Then $L$ is isotopic to $T_{3}(p,q,r)$. 
\end{prop}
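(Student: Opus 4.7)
The plan is to extend the Seifert fibration on $T^3 \cut L$ to a Seifert fibration of the closed manifold $T^3$ and invoke uniqueness of Seifert fibrations on $T^3$, mirroring the strategy of Propositions~\ref{prop:1-seifert} and~\ref{prop:2-seifert} but without reducing to a link complement in $S^3$. First, $L$ must be non-split, since a Seifert fibered 3-manifold with non-empty torus boundary is irreducible. I would then rule out the three exceptional cases of Proposition~\ref{prop:unique-seifert}: for each of $S^{1} \times D^{2}$, $T^{2} \times I$, and the twisted $I$-bundle over the Klein bottle, Dehn filling the boundary tori yields only lens spaces or manifolds with nontrivial torsion in $H_{1}$, never a manifold with $H_{1} = \bbZ^{3}$. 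In the remaining generic case, the Seifert fibration $T^{3} \cut L \to \Sigma$ is unique and fits into the short exact sequence
\[
1 \to \bbZ\langle\phi\rangle \to \pi_{1}(T^{3} \cut L) \to \pi_{1}^{\mathrm{orb}}(\Sigma) \to 1
\]
with $\bbZ\langle\phi\rangle$ generated by the regular fiber class $\phi$. On each boundary torus $\partial N(K_{i})$, I choose a basis $(\phi, \sigma_{i})$ with $\sigma_{i}$ a boundary curve of a section, and write $\mu_{i} = \gamma_{i} \phi + \delta_{i} \sigma_{i}$. The Seifert fibration extends across the Dehn filling at $K_{i}$ precisely when $\delta_{i} \neq 0$, and the extension introduces no singular fiber exactly when $|\delta_{i}| = 1$.

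The main obstacle is excluding the degenerate case $\delta_{i_{0}} = 0$ for some $i_{0}$, in which $\mu_{i_{0}}$ is parallel to $\phi$. In that case $\mu_{i_{0}} = \phi^{\pm 1}$ in $\pi_{1}(T^{3} \cut L)$, so the normal closure of the collection of meridians already contains $\bbZ\langle\phi\rangle$. Quotienting by it yields
\[
\bbZ^{3} \cong \pi_{1}(T^{3}) \cong \pi_{1}^{\mathrm{orb}}(\Sigma'),
\]
where $\Sigma'$ is the 2-orbifold obtained from $\Sigma$ by orbifold-filling each boundary circle $c_{i}$ with $\delta_{i} \neq 0$ (by a cone disk of order $|\delta_{i}|$ if $|\delta_{i}| \geq 2$, or a smooth disk if $|\delta_{i}| = 1$) while leaving the remaining circles intact. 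Since $c_{i_{0}}$ persists in $\partial \Sigma'$, the orbifold $\Sigma'$ has non-empty boundary, and hence $\pi_{1}^{\mathrm{orb}}(\Sigma')$ admits a presentation as a free product $F_{r} * (\bbZ/\alpha_{1}) * \cdots * (\bbZ/\alpha_{m})$ of a free group with finite cyclic groups. Such a group is abelian only when it reduces to a single factor of $\bbZ$, a finite cyclic group, or the trivial group; none of these is isomorphic to $\bbZ^{3}$, giving the desired contradiction.

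Hence $\delta_{i} \neq 0$ for every $i$, and the Seifert fibration extends to one on $T^{3}$. Since $T^{3}$ is the unique closed orientable Seifert fibered 3-manifold with $\pi_{1} = \bbZ^{3}$ and corresponds to $M(1,0;) = S^{1} \times T^{2}$ up to isomorphism, the extension must have genus-one base with no singular fibers, forcing $|\delta_{i}| = 1$ for every $i$. Thus each component $K_{i}$ is a regular fiber of an $S^{1} \times T^{2}$-structure on $T^{3}$, that is, a closed geodesic in a common primitive direction $(p', q', r')$; and the disjoint union of $n$ such parallel geodesics is isotopic to $T_{3}(p, q, r)$ with $(p, q, r) = (n p', n q', n r')$.
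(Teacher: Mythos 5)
Your proof is correct and shares the paper's overall skeleton (show no meridian is a fiber, extend the Seifert fibration over the filling solid tori, and invoke the fact that every Seifert fibration of $T^{3}$ is isomorphic to the product fibration $M(1,0;)$), but you handle the one genuinely delicate step --- excluding a meridian parallel to the fiber --- by a different route. The paper argues geometrically: assuming the meridian of a component $K$ is a fiber, it builds spheres in $T^{3} \cut (L \setminus K)$ from preimages of arcs in the base capped with meridian disks, and uses irreducibility of $T^{3}$ to force the base to be a non-singular punctured sphere, whence $T^{3}$ would be a connected sum of lens spaces or $S^{2} \times S^{1}$'s, a contradiction. You instead compute algebraically: killing the normal closure of the meridians must produce $\pi_{1}(T^{3}) \cong \bbZ^{3}$, but if some meridian equals the fiber class the quotient is the orbifold fundamental group of a $2$-orbifold with non-empty boundary, hence a free product of a free group with finite cyclic groups, which is never $\bbZ^{3}$. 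Both arguments are sound; yours is shorter and avoids the case analysis on the base, while the paper's is more self-contained at the level of embedded surfaces. Two small remarks: your preliminary exclusion of the exceptional manifolds of Proposition~\ref{prop:unique-seifert} is harmless but unnecessary, since the argument uses only the given fibration and never its uniqueness; and the final identification of $n$ parallel regular fibers with $T_{3}(p,q,r)$ implicitly uses that a self-homeomorphism of $T^{3}$ is isotopic to a linear one (i.e.\ $\pi_{0}(\Aut(T^{3})) \cong \mathrm{GL}(3,\bbZ)$), a step the paper's own proof also leaves tacit.
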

\begin{proof}
Assume that 
the meridian of a component $K$ of $L$ is 
a fiber in the Seifert fibration on $T^{3} \cut L$. 
Consider $M = T^{3} \cut (L \setminus K)$. 
Let $\Sigma$ be the base space of the Seifert fibration on $T^{3} \cut L$. 
For an arc in $\Sigma$ whose endpoints are contained 
in the boundary component corresponding to $K$, 
we obtain a sphere in $M$ which is the union of the lift of the arc in $T^{3} \cut L$ 
and two disks in a neighborhood of $K$. 
If $\Sigma$ is non-orientable, 
then an orientation-reversing arc induces a non-separating sphere in $M$. 
If $\Sigma$ has a positive genus, 
then a non-separating arc induces a non-separating sphere in $M$, 
which is impossible. 
If there is a singular fiber, 
then an arc in a neighborhood of an arc joining the boundary and the singular point 
induces a sphere in $M$ bounding a lens space minus a ball. 
Since the 3-torus is irreducible, these are impossible. 
Hence $\Sigma$ is a non-singular punctured sphere. 
and $T^{3} \cut L$ is homeomorphic to $\Sigma \times S^{1}$, 
and $M$ is homeomorphic to the complement of an unlink. 
Then $T^{3}$ is the connected sum of some lens spaces or $S^{2} \times S^{1}$'s. 
This contradicts the fact that $T^{3}$ is irreducible. 

Hence the meridian of any component of $L$ is not 
a fiber in the Seifert fibration on $T^{3} \cut L$. 
In this case, the Seifert fibration on $T^{3} \cut L$ extends to $T^{3}$. 
Any Seifert fibration of the 3-torus is isomorphic to 
the product fibering of $M(1,0;)$ 
by \cite[Theorem 2.3]{Hatcher07}. 
Moreover, $L$ consists of regular fibers. 
Therefore $L$ is isotopic to $T_{3}(p,q,r)$. 
\end{proof}

Suppose that $L$ is a non-split link in $T^{3}$. 
Let us consider the JSJ decomposition of $T^{3} \cut L$. 
We classify tori in the 3-torus. 

\begin{lem}
\label{lem:3-torus}
Let $T$ be an embedded torus in $T^{3}$. 
Then 
\begin{itemize}
\item $T^{3} \cut T$ is a thickened torus, 
\item $T$ bounds a solid torus, or 
\item $T$ bounds the connected sum of a solid torus and a 3-torus. 
\end{itemize}
In the first case, we say that $T$ is a \emph{layering torus}. 
In the last case, $T$ bounds a solid torus or a knotted hole ball. 
\end{lem}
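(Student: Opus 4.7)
The plan is to mirror the case split used in the proofs of Lemmas~\ref{lem:1-torus} and~\ref{lem:2-torus}, treating the incompressible and compressible cases of $T$ in turn.

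For the incompressible case, I would fix the product Seifert fibration on $T^{3} = T^{2} \times S^{1}$, which has no singular fibers, and apply \cite[Proposition 1.11]{Hatcher07} to isotope $T$ to be either vertical or horizontal. A vertical incompressible torus has the form $\gamma \times S^{1}$ for an essential simple closed curve $\gamma$ on $T^{2}$; since $\gamma$ is non-separating on $T^{2}$, cutting $T^{3}$ along this torus yields $(T^{2} \cut \gamma) \times S^{1} \cong (\textrm{annulus}) \times S^{1} \cong T^{2} \times I$. A horizontal incompressible torus projects onto $T^{2}$ as a finite cover, and the complement $T^{3} \cut T$ inherits an orientable $I$-bundle structure over the torus $T$; since orientable $I$-bundles over a torus are trivial, $T^{3} \cut T \cong T^{2} \times I$, yielding the first bullet.

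For the compressible case, pick a compressing disk $D$ for $T$ and compress to obtain a sphere $S$; since $T^{3}$ is irreducible, $S$ bounds a 3-ball $B$. If $B$ and $D$ are disjoint, then $T$ is recovered from $S$ by attaching a 1-handle outside $B$, and the region bounded by $T$ on the $B$-side is $B$ together with that 1-handle, i.e.\ a solid torus; this is the second bullet. If $D \subset B$, then $T$ is obtained from $S$ by a 1-handle attachment inside $B$. One region bounded by $T$ is the complement of this 1-handle in $B$, which is a solid torus or a knotted hole ball depending on whether the core arc of the 1-handle is unknotted or knotted in $B$. The other region is $(T^{3} \setminus B)$ with a 1-handle attached along two disjoint disks on $\partial B = S^{2}$. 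Recognising this 1-handle attachment as a connected sum with $S^{1} \times D^{2}$ --- equivalently, the complement in $T^{3}$ of the solid-torus-shaped region $B \setminus (\textrm{1-handle})$, which sits inside the ball $B$ and so may be removed via a connected sum decomposition --- we identify this region as $T^{3} \# (S^{1} \times D^{2})$, yielding the third bullet.

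The compressible case is essentially the same handle-manipulation argument as in Lemmas~\ref{lem:1-torus} and~\ref{lem:2-torus}, so the only genuinely new input is the identification of the outer region in the last sub-case as $T^{3} \# (S^{1} \times D^{2})$, and this follows whether or not the 1-handle is knotted since the attachment depends only on the pair of disks on $\partial B$. The main subtlety lies instead in the incompressible case: unlike in $T^{2} \times I$ (where closed horizontal surfaces cannot exist because horizontal surfaces carry non-empty boundary), here closed horizontal tori are genuine and can be multi-sections of arbitrary degree over $T^{2}$. I therefore expect the hardest step to be verifying that cutting along such a horizontal torus produces a \emph{connected} orientable $I$-bundle over a torus, so that the triviality of orientable $I$-bundles over $T^{2}$ applies.
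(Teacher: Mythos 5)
Your proposal is correct and follows essentially the same route as the paper: the incompressible case via isotoping $T$ to be vertical or horizontal in the product Seifert fibration of $T^{3}$ (citing \cite[Proposition 1.11]{Hatcher07}), and the compressible case by the same handle argument as in Lemma~\ref{lem:1-torus}; you in fact supply more detail than the paper, which simply asserts that both the vertical and horizontal cases yield a thickened torus. The one point you flag as remaining --- connectedness of $T^{3} \cut T$ in the horizontal case --- is immediate, since a horizontal torus has nonzero algebraic intersection number with a fiber and is therefore non-separating, so $T^{3} \cut T$ is a connected $I$-bundle whose $\partial I$-subbundle consists of two copies of $T$, hence the trivial bundle $T^{2} \times I$.
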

\begin{proof}
Fix a Seifert fibration of $T^{3}$, which has no singular fibers. 
If $T$ is incompressible, 
then $T$ is isotopic to be vertical or horizontal in the Seifert fibration 
by \cite[Proposition 1.11]{Hatcher07}. 
In both cases, $T^{3} \cut T$ is a thickened torus. 
If $T$ is compressible, 
then $T$ bounds a solid torus or a knotted hole ball 
in the same manner as Lemma~\ref{lem:1-torus}. 
\end{proof}

We say that $L$ is a \emph{layered link} 
if $T^{3} \cut L$ has a layering essential torus. 
We say that $L$ is a \emph{satellite link} 
if $T^{3} \cut L$ has an essential torus 
which is not layering. 
Then the following classification of links in the solid torus 
clearly holds by Proposition~\ref{prop:3-seifert}. 

\begin{prop}
\label{prop:3-class}
Suppose that $L$ is a non-split link in $T^{3}$. 
Then $L$ is one of the following: 
\begin{itemize}
\item $T_{3}(p,q,r)$, 
\item a hyperbolic link, 
\item a satellite link, or 
\item a layered link. 
\end{itemize}
\end{prop}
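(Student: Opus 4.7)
The plan is to dichotomize $L$ according to whether $T^{3} \cut L$ admits an essential torus. Since $L$ is non-split and $T^{3}$ is irreducible, the complement $T^{3} \cut L$ is irreducible, and the JSJ decomposition applies.

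Suppose first that $T^{3} \cut L$ contains no essential torus, so that it constitutes a single JSJ piece. Then it is either Seifert fibered or atoroidal. In the Seifert fibered case, Proposition~\ref{prop:3-seifert} yields that $L$ is isotopic to some $T_{3}(p,q,r)$. In the atoroidal case, Thurston's hyperbolization theorem endows the interior of $T^{3} \cut L$ with a finite-volume hyperbolic structure, provided $T^{3} \cut L$ is not homeomorphic to $T^{2} \times I$. I would rule out this exception by a Van Kampen argument: a homeomorphism $T^{3} \cut L \cong T^{2} \times I$ would force $L$ to have exactly two components, and then $T^{3}$ would be recovered by gluing two solid tori to the two boundary tori of $T^{2} \times I$. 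But then $\pi_{1}(T^{3})$ would be a quotient of $\pi_{1}(T^{2} \times I) = \bbZ^{2}$ by two successive relations killing primitive elements, hence of rank at most one, contradicting $\pi_{1}(T^{3}) = \bbZ^{3}$. Thus $L$ is hyperbolic in this sub-case.

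Suppose instead that $T^{3} \cut L$ contains an essential torus $T$. By Lemma~\ref{lem:3-torus}, $T$ is one of three types: a layering torus (so that $T^{3} \cut T$ is a thickened torus), a torus bounding a solid torus in $T^{3}$, or a torus bounding the connected sum of a solid torus and a 3-torus. In the first case, $L$ is a layered link by definition. In the other two cases, $T$ is not a layering torus, so $L$ is a satellite link by definition. Note that these four categories need not be mutually exclusive, echoing the analogous comment after Proposition~\ref{prop:2-class} in the thickened torus setting.

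The main obstacle, a minor one, is ruling out the exceptional case $T^{3} \cut L \cong T^{2} \times I$ in Thurston's hyperbolization; everything else is an immediate consequence of Lemma~\ref{lem:3-torus}, Proposition~\ref{prop:3-seifert}, and the definitions.
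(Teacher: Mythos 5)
Your proof is correct and takes essentially the same route as the paper, which simply asserts that the classification ``clearly holds'' from Proposition~\ref{prop:3-seifert} and Lemma~\ref{lem:3-torus}: dichotomize on the existence of an essential torus, use the Seifert/atoroidal alternative and hyperbolization in the first case, and read off satellite versus layered from Lemma~\ref{lem:3-torus} in the second. Your Van Kampen argument excluding $T^{3} \cut L \cong T^{2} \times I$ is a correct extra detail the paper leaves implicit (it is also subsumed by Proposition~\ref{prop:3-seifert}, since $T^{2} \times I$ is Seifert fibered but is not the complement of any $T_{3}(p,q,r)$).
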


If $L$ is a layered link in $T^{3}$, 
then $L$ splits to non-layered links along layering incompressible tori. 
The following lemma is proven 
in the same manner as Lemma~\ref{lem:1-outermost}. 

\begin{lem}
\label{lem:3-outermost}
Let $L$ be a non-split and non-layered link in $T^{3}$. 
Suppose that $M$ is the JSJ piece of $T^{3} \cut L$ 
such that $\overline{T^{3} \setminus M}$ is a disjoint union of solid tori and knotted hole balls, 
called the outermost JSJ piece. 
Then $M$ is homeomorphic to the complement of a link in the 3-torus. 
\end{lem}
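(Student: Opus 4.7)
The plan is to imitate the proof of Lemma~\ref{lem:1-outermost} verbatim, replacing the ambient solid torus $S^{1} \times D^{2}$ by $T^{3}$. The essential input is Lemma~\ref{lem:3-torus}, which provides exactly the analogue of Lemma~\ref{lem:1-torus}: since $M$ is non-layered, every boundary torus of $M$ that does not already bound a solid torus in $T^{3}$ must bound a knotted hole ball. By hypothesis these are the only two possibilities for components of $\overline{T^{3} \setminus M}$, so it suffices to modify the embedding at each boundary torus of the second type.

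First I would fix a single boundary component $T$ of $M$ that bounds a knotted hole ball $H \subset T^{3}$. By definition, $H$ is the complement of a properly embedded arc in a $3$-ball $B \subset T^{3}$. The region on the other side of $T$, which contains $M$, is then homeomorphic to $\overline{T^{3} \setminus B}$ with a $1$-handle (a regular neighborhood of the arc) attached. I would then perform an ambient isotopy of $T^{3}$, supported in a small ball neighborhood of $B$ together with the attached $1$-handle, that unknots this $1$-handle; this replaces the original embedding of $M$ by a new one in which the image of $T$ bounds a solid torus. Since the whole operation takes place inside a ball in $T^{3}$, the argument from Lemma~\ref{lem:1-outermost} applies verbatim.

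Next I would iterate over all boundary components of $M$ of this type. By \cite[Proposition 2.1]{Budney06}, the associated $1$-handles can be chosen to be pairwise disjoint, so the unknotting moves do not interfere with one another and can be performed independently. After completing the process, $\overline{T^{3} \setminus M}$ is a disjoint union of solid tori, i.e.\ a tubular neighborhood of a link in $T^{3}$, and so $M$ is realized as the complement of that link.

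I do not expect any serious obstacle: the unknotting is a purely local operation in a ball and hence is insensitive to whether the ambient manifold is $S^{1} \times D^{2}$ or $T^{3}$, and Lemma~\ref{lem:3-torus} has already supplied the required classification of embedded tori. The only notational difference to flag is that, unlike Lemmas~\ref{lem:1-outermost} and \ref{lem:2-outermost}, there is no distinguished portion of $\partial (T^{3})$ (since $T^{3}$ is closed), which is why the statement of Lemma~\ref{lem:3-outermost} omits the clause about fixing a neighborhood of the ambient boundary.
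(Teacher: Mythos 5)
Your proposal follows the paper's argument exactly: the paper proves Lemma~\ref{lem:3-outermost} ``in the same manner as Lemma~\ref{lem:1-outermost}'', i.e.\ by invoking Lemma~\ref{lem:3-torus} to see that each boundary torus of $M$ not bounding a solid torus bounds a knotted hole ball, re-embedding $M$ so that each such torus bounds a solid torus, and using \cite[Proposition 2.1]{Budney06} to make the $1$-handles disjoint so the operations do not interfere. One terminological correction: the unknotting of a $1$-handle cannot be achieved by an \emph{ambient isotopy} of $T^{3}$ (a knotted arc in a ball cannot be unknotted ambiently); it is a genuine re-embedding of the abstract manifold $\overline{T^{3}\setminus B}$ with the handle attached, which is why the lemma concludes only that $M$ is \emph{homeomorphic} to a link complement rather than isotopic to one.
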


\begin{prop}
\label{prop:3-jsj}
Suppose that $L$ is a non-split link in $T^{3}$. 
Let us consider the JSJ decomposition of $T^{3} \cut L$. 
If a JSJ piece $M$ is Seifert fibered, 
then $M$ is homeomorphic to one of the following: 
\begin{itemize}
\item $M(0, n; \alpha_{1} / \beta_{1}, \alpha_{2} / \beta_{2})$ 
	for $n \geq 1$ and $\alpha_{1} \beta_{2} - \alpha_{2} \beta_{1} = \pm 1$, 
\item $M(0, n; \alpha / \beta)$ for $n \geq 2$, 
\item $M(0, n; )$ for $n \geq 3$, or 
\item $M(1, n; )$ for $n \geq 1$. 
\end{itemize}
The Seifert fibration of $M$ is unique up to isotopy. 
\end{prop}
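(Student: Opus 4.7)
The plan is to parallel the strategy of Propositions~\ref{prop:1-jsj} and \ref{prop:2-jsj}, using Budney's classification \cite[Proposition 3.2]{Budney06} for the inner JSJ pieces and Proposition~\ref{prop:3-seifert} for the new outermost case. Let $M$ be a Seifert fibered JSJ piece of $T^{3} \cut L$. Each boundary torus of $M$ is either a meridional torus around a component of $L$, or a JSJ torus incompressible in $T^{3} \cut L$. By Lemma~\ref{lem:3-torus}, a torus embedded in $T^{3}$ is either a layering torus, or bounds a solid torus or a knotted hole ball. Classifying the JSJ piece reduces to analyzing which of these three neighbor types occur among its boundary tori.

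If no boundary torus of $M$ is a layering torus and $M$ is not the outermost piece of $T^{3} \cut L$, then $M$ is contained in a solid torus or a knotted hole ball in $T^{3}$. Unknotting the attached handles as in the proof of Lemma~\ref{lem:1-outermost} (and invoking the disjointness statement of \cite[Proposition 2.1]{Budney06}), we may re-embed $M$ as a Seifert fibered submanifold of $S^{3}$. Applying Budney's classification then delivers the first three options: $M(0,n;\alpha_{1}/\beta_{1},\alpha_{2}/\beta_{2})$ with $\alpha_{1}\beta_{2} - \alpha_{2}\beta_{1} = \pm 1$, $M(0,n;\alpha/\beta)$ with $n \geq 2$, or $M(0,n;)$ with $n \geq 2$. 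If $L$ is layered and $M$ sits between layering tori (or between a layering torus and the link), we cut along a maximal family of layering tori and re-embed $M$ as the complement of a non-layered link in $T^{2}\times I$; Proposition~\ref{prop:2-jsj} then yields the same three families.

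The new case arises when $M$ is the outermost JSJ piece in the sense of Lemma~\ref{lem:3-outermost}, with no layering torus in its boundary. Then $M$ re-embeds as the complement of a Seifert fibered link $L^{*}\subset T^{3}$. Proposition~\ref{prop:3-seifert} forces $L^{*}$ to be isotopic to some $T_{3}(p,q,r)$, and hence $M \cong M(1, \gcd(p,q,r);)$, giving the fourth family $M(1,n;)$ with $n \geq 1$. Uniqueness of the Seifert fibration then follows from Proposition~\ref{prop:unique-seifert}: the ranges of $n$ and the coprimality condition $\alpha_{1}\beta_{2} - \alpha_{2}\beta_{1} = \pm 1$ exclude the three exceptional manifolds $S^{1}\times D^{2}$, $T^{2}\times I$, and $S^{1}\tilde{\times}S^{1}\tilde{\times}I$ from the list, so each $M$ above admits only one Seifert fibration up to isotopy.

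The main obstacle will be the bookkeeping in the layered case: we must simultaneously handle layering tori and other JSJ tori, keeping track of which boundary component of $M$ comes from which source, and argue that the ``outermost piece'' in the non-layered sense is the unique producer of the genus-one family $M(1,n;)$. A secondary subtlety is checking that Lemma~\ref{lem:1-outermost}'s simultaneous re-embedding procedure remains valid when carried out inside $T^{3}$ rather than $S^{1}\times D^{2}$, which requires that all the disjoint solid tori and knotted hole balls bounding compressible JSJ tori can be unknotted in $T^{3}$ without interfering with the genus-one topology of the ambient manifold.
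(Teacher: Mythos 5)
Your proposal is correct and follows essentially the same route as the paper, which likewise obtains the genus-one family $M(1,n;)$ for the outermost piece from Lemma~\ref{lem:3-outermost} and Proposition~\ref{prop:3-seifert}, and disposes of all remaining Seifert fibered pieces by reducing to Proposition~\ref{prop:2-jsj}. The only slip is that you quote Budney's range $n\geq 2$ for the unmarked pieces $M(0,n;)$, whereas the statement asserts $n\geq 3$; this is repaired by observing that $M(0,2;)=T^{2}\times I$ cannot occur as a JSJ piece of $T^{3}\cut L$ (it would violate minimality of the JSJ tori, and it cannot be the whole complement since $H_{1}(T^{3}\cut L)$ surjects onto $H_{1}(T^{3})=\bbZ^{3}$), which is exactly why the paper routes these pieces through Proposition~\ref{prop:2-jsj}.
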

\begin{proof}
If $L$ is non-layered and the outermost JSJ piece $M$ is Seifert fibered, 
then $M$ is homeomorphic to $M(1, n; )$ 
by Proposition~\ref{prop:3-seifert} and Lemma~\ref{lem:3-outermost}. 
The other Seifert fibered pieces 
are those in Proposition~\ref{prop:2-jsj}. 
\end{proof}

\section{Cover-isotopy of links}
\label{section:isotopy}

We say that two links $L_{0}$ and $L_{1}$ in a 3-manifold $X$ is \emph{cover-isotopic} 
if there is a finite covering map $P \colon \widetilde{X} \to X$ 
such that the lifts (i.e. the preimages) $\widetilde{L_{0}} = P^{-1}(L_{0})$ and $\widetilde{L_{1}} = P^{-1}(L_{1})$ are isotopic in $\widetilde{X}$. 
It is quite non-trivial whether cover-isotopy implies isotopy. 

For instance, Manfredi \cite{Manfredi14} gave examples of non-isotopic links in a lens space having isotopic lifts in the 3-sphere. 
The last author \cite{Yoshida25a} showed there are infinitely many triples of non-isotopic hyperbolic links in the lens space $L(4,1)$ having isotopic lifts in the 3-sphere. 
It is also worth pointing out that cover-isotopy does not imply isotopy for links in the twisted $I$-bundle over the Klein bottle $S^{1} \tilde{\times} S^{1} \tilde{\times} I$ (also known as the thickened Klein bottle) in general. 
By arguments similar to ours, it can be proved that cover-isotopy implies isotopy for the links in the projective 3-space \cite{Yoshida25b}.

We first show that cover-isotopy implies isotopy 
for hyperbolic links in the solid torus, the thickened torus, or the 3-torus. 
We need to compare group actions on a hyperbolic 3-manifold. 
For a group $G$ and a topological space $M$, 
let $\alpha \colon G \to \Aut(M)$ be a homomorphism, 
called a \emph{(left) action} of $G$ on $M$, 
where $\Aut(M)$ is the group consisting of the self-homeomorphisms of $M$. 
An action is \emph{free} if $\alpha (g)$ has no fixed points for each non-trivial element $g \in G$. 
A homeomorphism $f \colon M \to M^{\prime}$ 
induces the action $f_{*}(\alpha)$ on $M^{\prime}$ 
defined by $f_{*}(\alpha) (g) = f \circ \alpha (g) \circ f^{-1} \in \Aut(M^{\prime})$ for $g \in G$. 
An \emph{isometric action} of $G$ on a hyperbolic 3-manifold $M$ is 
a homomorphism $\alpha \colon G \to \Isom(M)$, 
where $\Isom(M)$ is the group consisting of the self-isometries of $M$. 
An isometric action on a hyperbolic 3-manifold induces an action on the cusps 
which is isometric with respect to the Euclidean structure. 
If this action is free, the group acts on the tori by translations and interchanging components. 

To consider actions on a torus, 
we use the notion of rotation set as follows. 
Let $T^{m} = \bbR^{m} / \bbZ^{m}$ be an $m$-torus. 
Consider the group $\Aut_{0}(T^{m})$ consisting of the self-homeomorphisms of $T^{m}$ isotopic to the identity. 
Suppose that $h \in \Aut_{0}(T^{m})$, 
and a homeomorphism $\tilde{h} \colon \bbR^{m} \to \bbR^{m}$ is a lift of $h$. 
In other words, it holds that $h \circ P = P \circ \tilde{h}$ 
for the natural projection $P \colon \bbR^{m} \to \bbR^{m} / \bbZ^{m}$. 
The lift is unique up to the translations by $\bbZ^{m}$. 
Misiurewicz and Ziemian \cite{MZ89} defined the \emph{rotation set} of $\tilde{h}$ by 
\[
\rho (\tilde{h}) = \bigcap_{k=1}^{\infty} \overline{\left( \bigcup_{n=k}^{\infty} \left\{ \frac{\tilde{h}^{n}(x) - x}{n} \mid x \in \bbR^{2} \right\} \right)} \subset \bbR^{m}, 
\]
which is an analogue of the rotational number of a self-homeomorphism of $S^{1}$. 
If $\tilde{f} \colon \bbR^{m} \to \bbR^{m}$ is a lift of $f \in \Aut_{0}(T^{m})$, 
then $\rho (\tilde{f} \circ \tilde{h} \circ \tilde{f}^{-1}) = \rho (\tilde{h})$. 

\begin{lem}
\label{lem:translation}
Let $T^{m} = \bbR^{m} / \bbZ^{m}$ be an $m$-torus with a Euclidean structure. 
Let $h_{0}, h_{1} \colon T^{m} \to T^{m}$ be isometries isotopic to the identity. 
Suppose that there is a homeomorphism $f \colon T^{m} \to T^{m}$ isotopic to the identity satisfying $f \circ h_{0} \circ f^{-1} = h_{1}$. 
Then $h_{0} = h_{1}$. 
\end{lem}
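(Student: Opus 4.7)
The plan is to reduce the lemma to a one-line rotation-set computation, exploiting the conjugation invariance of $\rho$ just recalled. First, I would observe that any isometry of a flat torus $(T^m, g)$ that is isotopic to the identity is a translation: such an isometry lifts to an isometry of the universal cover, hence to an affine map $x \mapsto Ax + v$ of $\bbR^m$, and the linear part $A$ both preserves the covering lattice $\bbZ^m$ and induces the identity on $\pi_1(T^m) = \bbZ^m$ (because the map is homotopic to the identity), so $A = I$. Applying this to $h_0$ and $h_1$ yields lifts that are pure translations $\tilde{h}_i(x) = x + v_i$ for some $v_i \in \bbR^m$, and directly from the definition
\[
\rho(\tilde{h}_i) \;=\; \bigl\{v_i\bigr\},
\]
since $\bigl(\tilde{h}_i^{\,n}(x) - x\bigr)/n = v_i$ for every $x \in \bbR^m$ and every $n \geq 1$.

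Next, I would pick any lift $\tilde{f} \colon \bbR^m \to \bbR^m$ of $f$ and set $\tilde{g} := \tilde{f} \circ \tilde{h}_0 \circ \tilde{f}^{-1}$. By construction $\tilde{g}$ is a lift of $f \circ h_0 \circ f^{-1} = h_1$, and a short continuity-plus-discreteness argument shows that every lift of a translation of $T^m$ is globally a translation of $\bbR^m$: the continuous $\bbZ^m$-valued function $x \mapsto \tilde{g}(x) - x - v_1$ is connected, hence constant, equal to some $k \in \bbZ^m$. Therefore $\tilde{g}(x) = x + v_1 + k$, and so $\rho(\tilde{g}) = \{v_1 + k\}$. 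On the other hand, the conjugation invariance of the rotation set recorded just before the lemma gives $\rho(\tilde{g}) = \rho(\tilde{h}_0) = \{v_0\}$. Comparing the two computations forces $v_0 = v_1 + k$, i.e.\ $v_0 \equiv v_1 \pmod{\bbZ^m}$, which descends to the desired equality $h_0 = h_1$ on $T^m$.

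I do not foresee a genuine obstacle here; the lemma is essentially an immediate consequence of three small ingredients: the classification of identity-isotopic flat isometries as translations, the trivial computation $\rho(\mathrm{translation}\text{ by }v) = \{v\}$, and the conjugation invariance $\rho(\tilde{f}\tilde{h}\tilde{f}^{-1}) = \rho(\tilde{h})$ quoted from \cite{MZ89}. The only point worth flagging is the assertion that lifts of translations of $T^m$ are themselves translations of $\bbR^m$; the point is that a lift is only required \emph{a priori} to be a homeomorphism, and the reduction to a translation uses that $\bbZ^m$ is discrete.
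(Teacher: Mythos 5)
Your proof is correct and follows essentially the same route as the paper's: classify identity-isotopic isometries of the flat torus as translations, compute their rotation sets, and invoke the conjugation invariance of $\rho$ to conclude $v_0 \equiv v_1 \pmod{\bbZ^m}$. You simply spell out two details the paper leaves implicit (that the linear part of the lifted isometry is the identity, and that a lift of a translation is itself a translation up to an integer vector), both of which are handled correctly.
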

\begin{proof}
If $h \colon T^{m} \to T^{m}$ is an isometry isotopic to the identity, 
then $h \in \Aut_{0}(T^{m})$ is a translation by an element $v \in \bbR^{m} / \bbZ^{m}$. 
The rotation set $\rho (\tilde{h})$ is determined as a single point $\tilde{v} \in \bbR^{m}$ modulo $\bbZ^{m}$ that is a lift of $v$. 
Since there are lifts $\tilde{h}_{0}, \tilde{h}_{1}, \tilde{f} \colon \bbR^{m} \to \bbR^{m}$ of $h_{0}, h_{1}, f \colon T^{m} \to T^{m}$ such that $\tilde{f} \circ \tilde{h}_{0} \circ \tilde{f}^{-1} = \tilde{h}_{1}$, 
we have $\rho (\tilde{h}_{0}) \equiv \rho (\tilde{h}_{1})$ modulo $\bbZ^{m}$. 
Hence the translations $h_{0}$ and $h_{1}$ coincide. 
\end{proof}

We state a sufficient condition for cover-isotopy of hyperbolic links to imply isotopy in a general setting. 
Suppose that $X$ is a 3-manifold with (possibly empty) boundary consisting of tori. 
Let $L_{0}$ and $L_{1}$ be hyperbolic links in $X$. 
Let $P \colon \widetilde{X} \to X$ be a finite regular covering map. 
Suppose that the lifts $\widetilde{L_{0}} = P^{-1}(L_{0})$ and $\widetilde{L_{1}} = P^{-1}(L_{1})$ are isotopic in $\widetilde{X}$. 
Then the interior of the complement $M_{i} = X \cut L_{i}$ admits a hyperbolic metric for $i=0,1$. 
Its cover $\widetilde{M_{i}} = \widetilde{X} \cut \widetilde{L_{i}}$ 
is endowed with the lifted hyperbolic metric. 
The isotopy between $\widetilde{L_{0}}$ and $\widetilde{L_{1}}$ induces a homeomorphism $f \colon \widetilde{M_{0}} \to \widetilde{M_{1}}$. 
The Mostow rigidity theorem implies that $f$ is isotopic to an isometry $\tilde{\iota} \colon \widetilde{M_{0}} \to \widetilde{M_{1}}$. 
The finite group $G = \pi_{1}(X) / \pi_{1}(\widetilde{X})$ acts on $\widetilde{X}$ by the deck transformations. 
This action induces free isometric actions of $G$ on the hyperbolic 3-manifolds $\widetilde{M_{0}}$ and $\widetilde{M_{1}}$, denoted by $\alpha_{0}$ and $\alpha_{1}$. 

\begin{lem}
\label{lem:hyp-isotopy}
Under the above assumptions, 
if the actions $\tilde{\iota}_{*}(\alpha_{0})$ and $\alpha_{1}$ on $\widetilde{M_{1}}$ coincide, 
then the pairs $(X, L_{0})$ and $(X, L_{1})$ are homeomorphic. 
Furthermore, suppose that if a self-homeomorphism $f$ of $X$ lifts to a self-homeomorphism $\tilde{f}$ of $\widetilde{X}$ isotopic to the identity, then $f$ is isotopic to the identity. 
In this case, $L_{0}$ and $L_{1}$ are isotopic in $X$. 
\end{lem}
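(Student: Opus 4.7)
The plan is to descend $\tilde{\iota}$ to a homeomorphism of the base, extend it across the link, and then use the hypothesis to upgrade to an ambient isotopy in $X$.

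\textbf{First claim.} The hypothesis $\tilde{\iota}_{*}(\alpha_{0})=\alpha_{1}$ says precisely that $\tilde{\iota}$ intertwines the two restrictions of the deck action on $\widetilde{X}$, i.e., $\tilde{\iota}$ is $G$-equivariant. Hence $\tilde{\iota}$ descends through the covering maps to an isometry $\iota \colon M_{0} \to M_{1}$ of the quotient hyperbolic complements. By the meridian-extension criterion recalled in the preliminaries, to upgrade $\iota$ to a homeomorphism of pairs it suffices to check that $\iota$ sends meridians to meridians. The homeomorphism $f$ induced by the ambient isotopy $F_{t}$ (with $F_{0}=\mathrm{id}$ and $F_{1}(\widetilde{L_{0}})=\widetilde{L_{1}}$) sends meridians to meridians, since $F_{1}$ is itself a homeomorphism of pairs. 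Since Mostow rigidity places $\tilde{\iota}$ in the same isotopy class as $f$ on $\widetilde{M_{0}}\to\widetilde{M_{1}}$, $\tilde{\iota}$ sends meridian classes to meridian classes; passing to $G$-quotients, so does $\iota$. Thus $\iota$ extends to a homeomorphism of pairs $h \colon (X,L_{0}) \to (X,L_{1})$.

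\textbf{Second claim.} Viewed as a self-homeomorphism of $X$, the map $h$ sends $L_{0}$ to $L_{1}$, and by construction it lifts to a self-homeomorphism $\tilde{h}$ of $\widetilde{X}$ with $\tilde{h}(\widetilde{L_{0}}) = \widetilde{L_{1}}$ and $\tilde{h}|_{\widetilde{M_{0}}} = \tilde{\iota}$. The goal is to show $\tilde{h}$ is isotopic to $\mathrm{id}_{\widetilde{X}}$, so that the standing hypothesis gives an isotopy $H_{t}\colon X\to X$ with $H_{0}=\mathrm{id}$ and $H_{1}=h$; then $H_{t}(L_{0})$ is the desired isotopy from $L_{0}$ to $L_{1}$ in $X$. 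Since $\tilde{\iota}$ is isotopic to $f=F_{1}|_{\widetilde{M_{0}}}$ on the link complement, I promote this to an isotopy on the pair by applying isotopy extension across the tubular neighborhoods of $\widetilde{L_{1}}$. This yields an isotopy of pair-homeomorphisms from $\tilde{h}$ to $F_{1}$; combined with $F_{t}$, which isotopes $F_{1}$ to the identity on $\widetilde{X}$, we obtain $\tilde{h}\simeq \mathrm{id}_{\widetilde{X}}$, as required.

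\textbf{Main obstacle.} The one technical point is the isotopy extension step: promoting an isotopy between $\tilde{\iota}$ and $f$ on $\widetilde{M_{0}}\to\widetilde{M_{1}}$ to an isotopy of pair-homeomorphisms on $(\widetilde{X},\widetilde{L_{0}})\to(\widetilde{X},\widetilde{L_{1}})$. What makes this work is that the meridian slope is a discrete datum on each boundary torus, so a continuous family of homeomorphisms whose endpoints both send meridians to meridians must preserve meridian slopes throughout, and the extension over each tubular neighborhood is then essentially canonical. Once this is in hand, everything else is a packaging of Mostow rigidity (plus the Waldhausen/Gabai--Meyerhoff--Thurston upgrade from homotopy to isotopy) and the meridian criterion for extending complement homeomorphisms to the pair.
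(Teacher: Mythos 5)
Your proposal is correct and follows essentially the same route as the paper's proof: descend $\tilde{\iota}$ to $\iota\colon M_0\to M_1$ via $G$-equivariance, extend over the link using the meridian criterion, observe that the lifted pair homeomorphism is isotopic to the identity of $\widetilde{X}$ (via the isotopy to $f$ and then $F_t$), and invoke the standing hypothesis to conclude. The paper states these steps more tersely; your filling-in of the meridian-preservation and isotopy-extension details is consistent with what the authors intend.
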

\begin{proof}
Since $\tilde{\iota}_{*}(\alpha_{0}) = \alpha_{1}$, 
the isometry $\tilde{\iota} \colon \widetilde{M_{0}} \to \widetilde{M_{1}}$ is $G$-equivariant. 
Hence $\tilde{\iota}$ induces an isometry $\iota \colon M_{0} \to M_{1}$. 
Moreover, $\iota$ preserves the meridians. 
Hence $\iota$ can be extended to a homeomorphism from $(X, L_{0})$ to $(X, L_{1})$. 

The homeomorphism $\iota \colon (X, L_{0}) \to (X, L_{1})$ lifts to a homeomorphism $\tilde{\iota} \colon (\widetilde{X}, \widetilde{L_{0}}) \to (\widetilde{X}, \widetilde{L_{1}})$ isotopic to the identity of $\widetilde{X}$. 
If $\iota$ is isotopic to the identity of $X$, 
then $L_{0}$ and $L_{1}$ are isotopic in $X$. 
\end{proof}

To use Lemma~\ref{lem:hyp-isotopy} effectively, 
we need the following lemma, 
which concerns the Birman--Hilden property 
(see \cite{MW21}). 

\begin{lem}
\label{lem:isotopy-lift}
Suppose that $X$ is $S^{1} \times D^{2}$, $T^{2} \times I$, or $T^{3}$. 
Let $P \colon \widetilde{X} \to X$ be a finite covering map. 
Suppose that a self-homeomorphism $f$ of $X$ lifts to a self-homeomorphism $\tilde{f}$ of $\widetilde{X}$ isotopic to the identity. 
Then $f$ is isotopic to the identity. 
\end{lem}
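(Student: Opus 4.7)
I will extract from the isotopy $\tilde{f} \sim \mathrm{id}_{\widetilde{X}}$ enough algebraic information about $f$ and then invoke the classification of orientation-preserving self-homeomorphisms of each of the three model manifolds up to isotopy. Since $\tilde{f}$ is isotopic to the identity, $\tilde{f}_{*} = \mathrm{id}$ on $\pi_{1}(\widetilde{X})$, $\tilde{f}$ preserves orientation, and $\tilde{f}$ fixes each component of $\partial \widetilde{X}$ setwise. The intertwining $f \circ P = P \circ \tilde{f}$ transports these facts down. In particular, $f_{*}$ fixes every element of the finite-index subgroup $H := P_{*}(\pi_{1}(\widetilde{X})) \leq \pi_{1}(X)$; since $\pi_{1}(X) \in \{\bbZ, \bbZ^{2}, \bbZ^{3}\}$ is torsion-free abelian, for any $g \in \pi_{1}(X)$ some positive multiple $ng$ lies in $H$, whence $n f_{*}(g) = ng$ forces $f_{*}(g) = g$. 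Thus $f_{*} = \mathrm{id}$ on $\pi_{1}(X)$. Likewise, $P$ is surjective on boundary components and $f(P(\widetilde{B})) = P(\widetilde{B})$, so $f$ is orientation-preserving and fixes each component of $\partial X$ setwise.

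Given these properties of $f$, I conclude by appealing to the (free) mapping class group in each case. For $X = T^{3}$, the mapping class group equals $GL(3, \bbZ)$, acting faithfully on $\pi_{1}(T^{3})$; thus $f_{*} = \mathrm{id}$ immediately gives $f$ isotopic to the identity. Alternatively, asphericity of $T^{3}$ yields $f \simeq \mathrm{id}$ and Waldhausen's theorem for closed Haken $3$-manifolds upgrades this to an isotopy. For $X = T^{2} \times I$, the restriction $f|_{T^{2} \times \{i\}}$ induces the identity on $\pi_{1}(T^{2})$ and so is isotopic to $\mathrm{id}$ on $T^{2}$, since $\mathrm{Mod}(T^{2}) = GL(2,\bbZ)$ acts faithfully on $\pi_{1}$; extending these boundary isotopies through collars, I may assume $f|_{\partial X} = \mathrm{id}$. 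The resulting element of the rel-boundary mapping class group is a composition of Dehn twists along a parallel torus $T^{2} \times \{\tfrac{1}{2}\}$, and each such twist $(x, t) \mapsto (x + \rho(t)\, v, t)$ is freely isotopic to the identity via the family $(x, t) \mapsto (x + s \rho(t)\, v, t)$ for $s \in [0, 1]$, which moves $\partial X$ during the deformation but is a valid ambient isotopy. For $X = S^{1} \times D^{2}$, $f|_{\partial}$ must preserve the meridian up to sign (the meridian is the unique isotopy class of essential simple closed curve on $\partial X$ bounding a disk in $X$), and orientation-preservation together with $f_{*} = \mathrm{id}$ on the longitude class forces $f|_{\partial}$ to be isotopic to a power $\tau^{n}$ of the meridian Dehn twist. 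This $\tau^{n}$ extends over $X$ as the bundle rotation $T_{n}(s, w) = (s, e^{2 \pi i n s}\, w)$, which is freely isotopic to the identity through $T_{n}^{u}(s, w) = (s, e^{2 \pi i n s u}\, w)$ for $u \in [0, 1]$. After this adjustment one may assume $f|_{\partial} = \mathrm{id}$, and a fiberwise Alexander trick on the $D^{2}$-fibers of the product structure of $X$ shows $f \sim \mathrm{id}$.

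The main obstacle is the boundary bookkeeping in the two bounded cases: the mapping class groups of $T^{2} \times I$ and $S^{1} \times D^{2}$ rel $\partial$ are non-trivial, so $f_{*} = \mathrm{id}$ together with $f|_{\partial X} = \mathrm{id}$ does \emph{not} directly give $f$ isotopic to the identity rel boundary. The lemma, however, asks only for free isotopy, and the explicit one-parameter families exhibited above confirm that the residual rel-$\partial$ twists are trivialized once boundary-moving isotopies are allowed.
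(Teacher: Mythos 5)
Your reduction to the algebraic data is sound and matches the paper's: since $\tilde{f}_{*}$ is the restriction of $f_{*}$ to the finite-index subgroup $P_{*}(\pi_{1}(\widetilde{X}))$ of the torsion-free abelian group $\pi_{1}(X)$, the identity on that subgroup forces $f_{*}=\mathrm{id}$, and orientation and boundary components are preserved. Your treatment of $T^{3}$ and of $T^{2}\times I$ is correct; in particular the twist $(x,t)\mapsto(x+\rho(t)v,t)$ along a parallel torus really is freely isotopic to the identity via $(x,t)\mapsto(x+s\rho(t)v,t)$, because at each fixed $s$ the boundary is moved by a genuine translation of $T^{2}$.

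The solid torus case, however, contains a genuine error, and it is exactly the point the lemma exists to handle. Your ``free isotopy'' $T_{n}^{u}(s,w)=(s,e^{2\pi i n s u}w)$ is not a well-defined self-map of $S^{1}\times D^{2}$: writing $S^{1}=\bbR/\bbZ$, the values at $s$ and $s+1$ differ by the rotation $e^{2\pi i n u}$, so the formula descends only when $nu\in\bbZ$. No such isotopy exists: the twist $T_{n}$ for $n\neq 0$ represents $n$ times the generator of the $\bbZ$-factor in Hatcher's computation $\pi_{0}(\Aut(S^{1}\times D^{2}))\cong(\bbZ/2\bbZ\times\bbZ)\rtimes\bbZ/2\bbZ$ (the $\bbZ$ comes from $\pi_{1}(\mathrm{SO}(2))$ acting on the disk fibers), so it is \emph{not} isotopic to the identity even though it acts trivially on $\pi_{1}$, preserves orientation, and preserves the boundary. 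Concretely, $T_{1}$ carries the link $T_{1}(1,0)$ (core plus longitudinal push-off) to $T_{1}(1,1)$, which are non-isotopic by Lemma~\ref{lem:1-t-link}. Consequently your argument only shows $f\simeq T_{n}$ for some $n$, and the data you extracted ($f_{*}=\mathrm{id}$, orientation, boundary) cannot distinguish $T_{n}$ from the identity. To finish you must use the covering hypothesis once more: under the degree-$n_{0}$ cover $(s,w)\mapsto(n_{0}s,w)$ the twist $T_{a}$ lifts to $T_{an_{0}}$, so the twist parameter of $\tilde{f}$ is $n_{0}$ times that of $f$; since $\tilde{f}$ is isotopic to the identity its parameter vanishes, forcing $a=0$. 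This is precisely the paper's argument, and without it the solid torus case of your proof does not close.
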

\begin{proof}
Recall that $\Aut(X)$ is the group consisting of self-homeomorphisms of $X$, and $\Aut_{0}(X)$ is the subgroup of $\Aut(X)$ consisting of self-homeomorphisms of $X$ isotopic to the identity. 
The mapping class group of $X$ is defined as the quotient group $\pi_{0}(\Aut(X)) = \Aut(X) / \Aut_{0}(X)$. 
The action on the fundamental group induces the natural homomorphism from $\pi_{0}(\Aut(X))$ to the outer automorphism group $\Out(\pi_{1}(X))$. 
Moreover, $\Out(\bbZ^{n}) \cong \mathrm{GL}(n, \bbZ)$. 
Due to Hatcher \cite{Hatcher76}, the following isomorphisms are known to hold: 
\begin{align*}
\pi_{0}(\Aut(S^{1} \times D^{2})) & \cong (\bbZ / 2\bbZ \times \bbZ) \rtimes \bbZ / 2\bbZ, \\
\pi_{0}(\Aut(T^{2} \times I)) & \cong \mathrm{GL}(2, \bbZ) \times \bbZ / 2\bbZ, \\
\pi_{0}(\Aut(T^{3})) & \cong \mathrm{GL}(3, \bbZ). 
\end{align*}
Note that we allow orientation-reversing self-homeomorphisms. 
In these cases, the homomorphism $\pi_{0}(\Aut(X)) \to \Out(\pi_{1}(X))$ is surjective. 
The first $\bbZ / 2\bbZ$-subgroup of $\pi_{0}(\Aut(S^{1} \times D^{2}))$ is $\Out(\pi_{1}(S^{1} \times D^{2}))$. The $\bbZ$-subgroup of $\pi_{0}(\Aut(S^{1} \times D^{2}))$ is generated by the Dehn twist. The last $\bbZ / 2\bbZ$-subgroup of $\pi_{0}(\Aut(S^{1} \times D^{2}))$ is generated by a reflection preserving each $\{ t \} \times D^{2}$. The $\bbZ / 2\bbZ$-subgroup of $\pi_{0}(\Aut(T^{2} \times I))$ is generated by the reflection about $T^{2} \times \{ 0 \}$. 

Since $\pi_{1}(X)$ is abelian, the isomorphism $f_{*} \colon \pi_{1}(X) \to \pi_{1}(X)$ does not depend on the choice of a base point. 
A self-homeomorphism $f \in \Aut(X)$ lifts to $\tilde{f} \in \Aut(\widetilde{X})$ 
if and only if the two subgroups $f_{*} \circ P_{*}(\pi_{1}(\widetilde{X}))$ and $P_{*}(\pi_{1}(\widetilde{X}))$ of $\pi_{1}(X)$ coincide. 
In other words, the subgroup $P_{*}(\pi_{1}(\widetilde{X}))$ of $\pi_{1}(X)$ is invariant under $f_{*}$. 
Then $\tilde{f}_{*} \colon \pi_{1}(\widetilde{X}) \to \pi_{1}(\widetilde{X})$ is the restriction of $f_{*}$. 

Suppose that $\tilde{f} \in \Aut_{0}(\widetilde{X})$. 
Then $\tilde{f}_{*} \colon \pi_{1}(\widetilde{X}) \to \pi_{1}(\widetilde{X})$ is the identity. 
Hence $f_{*} \colon \pi_{1}(X) \to \pi_{1}(X)$ is also the identity. 
In other words, $[f] \in \pi_{0}(\Aut(X))$ is mapped to the identity in $\Out(\pi_{1}(X))$. 
Since $\tilde{f}$ is orientation-preserving, so is $f$. 
Hence $f \in \Aut_{0}(X)$ in the cases that $X$ is $T^{2} \times I$ or $T^{3}$. 
In the case that $X$ is $S^{1} \times D^{2}$, we need to pay attention to the $\bbZ$-subgroup of $\pi_{0}(\Aut(S^{1} \times D^{2}))$. 
If its element for $[f] \in \pi_{0}(\Aut(X))$ is $a$, 
then that for $[\tilde{f}] \in \pi_{0}(\Aut(\widetilde{X}))$ is the product of $a$ and the degree of the covering map $P \colon \widetilde{X} \to X$, which is equal to zero. 
Hence $a = 0$. 
Therefore $f \in \Aut_{0}(X)$. 
\end{proof}

Note that if $X$ is $S^{1} \times S^{2}$, the conclusion in Lemma~\ref{lem:isotopy-lift} does not hold. 
Indeed, $\pi_{0}(\Aut(S^{1} \times S^{2})) \cong \bbZ / 2\bbZ \times \bbZ / 2\bbZ \times \bbZ / 2\bbZ$. 
Its $\bbZ / 2\bbZ$-subgroup instead of the $\bbZ$-subgroup of $\pi_{0}(\Aut(S^{1} \times D^{2}))$ is due to $\pi_{1}(\mathrm{SO}(3)) \cong \bbZ / 2\bbZ$ (cf. the belt trick). 
Hence a self-homeomorphism of $S^{1} \times S^{2}$ corresponding to $(0,1,0) \in \bbZ / 2\bbZ \times \bbZ / 2\bbZ \times \bbZ / 2\bbZ$ lifts to a self-homeomorphism of the double cover isotopic to the identity.

\begin{thm}
\label{thm:hyp-isotopy}
Suppose that $X$ is $S^{1} \times D^{2}$, $T^{2} \times I$, or $T^{3}$. 
Let $L_{0}$ and $L_{1}$ be hyperbolic links in $X$. 
Let $P \colon \widetilde{X} \to X$ be a finite covering map. 
Suppose that the lifts $\widetilde{L_{0}} = P^{-1}(L_{0})$ and $\widetilde{L_{1}} = P^{-1}(L_{1})$ are isotopic in $\widetilde{X}$. 
Then $L_{0}$ and $L_{1}$ are isotopic in $X$. 
\end{thm}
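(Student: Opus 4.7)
The strategy is to produce a $G$-equivariant isometry of the hyperbolic complements via Mostow rigidity and Lemma~\ref{lem:translation}, and then to descend it to $X$ using Lemmas~\ref{lem:hyp-isotopy} and \ref{lem:isotopy-lift}.

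First, by pulling $P$ back along a further finite cover, I may assume without loss of generality that $P \colon \widetilde{X} \to X$ is regular, with deck transformation group $G$; the hypothesis survives this replacement, since an ambient isotopy in $\widetilde{X}$ carrying $\widetilde{L_{0}}$ to $\widetilde{L_{1}}$ lifts to one carrying the preimages in any further cover. The given isotopy is realized by an ambient isotopy $\{ \tilde{F}_{t} \}_{t \in [0,1]}$ of $\widetilde{X}$ with $\tilde{F}_{0} = \mathrm{id}$ and $\tilde{F}_{1}(\widetilde{L_{0}}) = \widetilde{L_{1}}$. Let $\widetilde{M_{i}} = \widetilde{X} \cut \widetilde{L_{i}}$; each inherits a finite-volume hyperbolic structure from $M_{i} = X \cut L_{i}$. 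The restriction $f := \tilde{F}_{1}|_{\widetilde{M_{0}}} \colon \widetilde{M_{0}} \to \widetilde{M_{1}}$ is a homeomorphism, and Mostow rigidity gives a unique isometry $\tilde{\iota} \colon \widetilde{M_{0}} \to \widetilde{M_{1}}$ isotopic to $f$. By Lemma~\ref{lem:hyp-isotopy} together with Lemma~\ref{lem:isotopy-lift}, it suffices to show that $\tilde{\iota}_{*}(\alpha_{0}) = \alpha_{1}$.

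Fix $g \in G$ and set $\phi_{g} := \tilde{\iota} \circ \alpha_{0}(g) \circ \tilde{\iota}^{-1}$, an isometry of $\widetilde{M_{1}}$; the goal is $\phi_{g} = \alpha_{1}(g)$. Since $\alpha_{0}(g) = \alpha_{1}(g)$ as self-homeomorphisms of $\widetilde{X}$ and $\tilde{F}_{1}$ is isotopic to the identity on $\widetilde{X}$, the conjugate $\tilde{F}_{1} \circ \alpha_{0}(g) \circ \tilde{F}_{1}^{-1}$ is isotopic to $\alpha_{1}(g)$ in $\widetilde{X}$; in particular it induces the same permutation on the components of $\widetilde{L_{1}}$, so $\phi_{g}$ and $\alpha_{1}(g)$ send each cusp of $\widetilde{M_{1}}$ to the same cusp. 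Pick a cusp $T$ and form the self-isometry $\eta_{g} := \alpha_{1}(g)^{-1} \circ \phi_{g}$ of $\widetilde{M_{1}}$, which fixes $T$ setwise. The key is to show $\eta_{g}|_{T} = \mathrm{id}$: the family $\tilde{\eta}_{g,t} := \alpha_{1}(g)^{-1} \circ \tilde{F}_{t} \circ \alpha_{0}(g) \circ \tilde{F}_{t}^{-1}$ is an isotopy in $\widetilde{X}$ from $\mathrm{id}$ to a homeomorphism whose restriction to $\widetilde{M_{1}}$ is isotopic (via Mostow) to $\eta_{g}$, and after identifying the cusps transported by the family, this exhibits $\eta_{g}|_{T}$ as a translation whose lift to $\bbR^{2}$ is conjugate to that of the identity by a homeomorphism isotopic to the identity. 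Lemma~\ref{lem:translation} then forces the two rotation vectors to coincide, so $\eta_{g}|_{T} = \mathrm{id}$. Since two isometries of the finite-volume hyperbolic 3-manifold $\widetilde{M_{1}}$ that agree on an open set are equal, $\eta_{g} = \mathrm{id}$ and hence $\phi_{g} = \alpha_{1}(g)$.

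The main obstacle is precisely this cusp-level comparison: the isotopy $\{ \tilde{F}_{t} \}$ does not restrict to an isotopy of $\widetilde{M_{1}}$, so one cannot directly identify $\phi_{g}$ and $\alpha_{1}(g)$ inside the mapping class group of $\widetilde{M_{1}}$, and the rotation-set invariant from Lemma~\ref{lem:translation}, together with a careful identification of cusps along the ambient isotopy, is what bridges this gap. Once $\tilde{\iota}_{*}(\alpha_{0}) = \alpha_{1}$ is established, Lemma~\ref{lem:hyp-isotopy} produces a homeomorphism $(X, L_{0}) \to (X, L_{1})$ whose lift to $\widetilde{X}$ is isotopic to the identity, and Lemma~\ref{lem:isotopy-lift}, applicable because $X$ is $S^{1} \times D^{2}$, $T^{2} \times I$, or $T^{3}$, upgrades this to an ambient isotopy in $X$, yielding the desired isotopy between $L_{0}$ and $L_{1}$.
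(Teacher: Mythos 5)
Your skeleton matches the paper's: reduce via Mostow rigidity and Lemmas~\ref{lem:hyp-isotopy} and \ref{lem:isotopy-lift} to showing $\tilde{\iota}_{*}(\alpha_{0}) = \alpha_{1}$, and prove that equality by comparing two isometries of a Euclidean torus through Lemma~\ref{lem:translation}. But the step where you actually carry out the comparison has a genuine gap. The paper, for $X = S^{1} \times D^{2}$ or $T^{2} \times I$, anchors the comparison at a cusp coming from a component of $\partial \widetilde{X}$: there one may assume the ambient isotopy $\tilde{F}_{t}$ fixes a horocusp neighborhood $H$, that the two hyperbolic metrics agree on $H$, and --- crucially --- that $\alpha_{0}(g)$ and $\alpha_{1}(g)$ restrict to the \emph{same} translation of $\partial H$, since both are restrictions of the same deck transformation of $\widetilde{X}$. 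Only then is $\tilde{\iota} \circ f^{-1} \in \Aut_{0}(\partial H)$ an honest conjugacy between two isometries isotopic to the identity, and Lemma~\ref{lem:translation} applies. You instead work at an arbitrary cusp $T$ of $\widetilde{M_{1}}$, which for a link-component cusp requires two things you do not establish: (a) that $\phi_{g}$ and $\alpha_{1}(g)$ induce the same permutation of cusps --- this does not follow from $\tilde{F}_{1}$ being isotopic to the identity, because the isotopy need not preserve $\widetilde{L_{1}}$ and distinct components of $\widetilde{L_{1}}$ can be freely homotopic, so $\tilde{F}_{1}(\alpha(g)K)$ and $\alpha(g)(\tilde{F}_{1}K)$ need not be the same component; and (b) a marking of $T$ against which $\eta_{g}|_{T}$ has a well-defined rotation vector to compare with zero. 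The phrase ``after identifying the cusps transported by the family'' conceals exactly this; note also that if $\eta_{g}|_{T}$ were literally conjugate to the identity it would \emph{be} the identity, so as written the claim is either vacuous or depends on the unconstructed identification.

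The gap is fatal in the case $X = T^{3}$, where $\widetilde{M_{1}}$ has no boundary cusp at all and every cusp is a link-component cusp, so no anchored comparison of the above kind is available. The paper handles this case by a different device: the group $\overline{G}$ generated by $\tilde{\iota}_{*}(\alpha_{0})(G)$ and $\alpha_{1}(G)$ inside the finite group $\Isom(\widetilde{M_{1}})$ preserves meridians, hence extends to a smooth finite group action on $\widetilde{X} = T^{3}$; the theorem of Meeks and Scott then produces a Euclidean metric on $\widetilde{X}$ making this action isometric, and Lemma~\ref{lem:translation} is applied to the two actions on the Euclidean $3$-torus $\widetilde{X}$ itself rather than on a cusp cross-section. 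Your proof offers no substitute for this step, so the closed case is not covered.
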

\begin{proof}
Since the fundamental group $\pi_{1}(X)$ is abelian, 
the covering map $P$ is regular. 
We already have an isometry $\tilde{\iota} \colon \widetilde{M_{0}} \to \widetilde{M_{1}}$ and a free isometric action $\alpha_{i}$ of the finite group $G = \pi_{1}(X) / \pi_{1}(\widetilde{X})$ on the hyperbolic 3-manifold $\widetilde{M_{i}}$ for each $i = 0,1$. 
By Lemmas \ref{lem:hyp-isotopy} and \ref{lem:isotopy-lift}, 
it is sufficient to show that $\tilde{\iota}_{*}(\alpha_{0}) = \alpha_{1}$. 

We first suppose that $X$ is $S^{1} \times D^{2}$ or $T^{2} \times I$. 
Then we may assume that the isotopy between $\widetilde{L_{0}}$ and $\widetilde{L_{1}}$ fixes a neighborhood $H$ of a component of $\partial \widetilde{X}$, 
and the hyperbolic metrics of $\widetilde{M_{0}}$ and $\widetilde{M_{1}}$ coincide in the horocusp $H$. 
The isotopy from the homeomorphism $f \colon \widetilde{M_{0}} \to \widetilde{M_{1}}$ to the isometry $\tilde{\iota} \colon \widetilde{M_{0}} \to \widetilde{M_{1}}$ can be taken to fix $H$ setwise. 
Moreover, $f_{*}(\alpha_{0}) = \alpha_{1}$ on $H$. 
The isometric action $\tilde{\iota}_{*}(\alpha_{0})$ on the Euclidean torus $\partial H$ is conjugate to $f_{*}(\alpha_{0}) = \alpha_{1}$ by 
$\tilde{\iota} \circ f^{-1} \in \Aut_{0}(\partial H)$. Lemma~\ref{lem:translation} implies that the isometries $\tilde{\iota}_{*}(\alpha_{0})(g)$ and $\alpha_{1}(g)$ for each $g \in G$ coincide on $\partial H$. 
Hence they coincide on $H$, and so on whole $\widetilde{M_{1}}$. 
Therefore $\tilde{\iota}_{*}(\alpha_{0}) = \alpha_{1}$.

We next suppose that $X$ is $T^{3}$. 
We consider the two subgroups $G_{0} = \tilde{\iota}_{*}(\alpha_{0})(G)$ and $G_{1} = \alpha_{1}(G)$ of the finite group $\Isom(\widetilde{M_{1}})$. 
Let $\overline{G}$ denote the subgroup of $\Isom(\widetilde{M_{1}})$ generated by $G_{0}$ and $G_{1}$. 
Since the elements of $G_{0}$ and $G_{1}$ preserves the meridians, 
so do the elements of $\overline{G}$. 
Hence the action of the finite group $\overline{G}$ on $\widetilde{M_{1}}$ can be extended to a smooth action on $\widetilde{X}$. 
Note that this action might not be free. 
The 3-manifold or 3-orbifold $\widetilde{X}/ \overline{G}$ is a quotient of a 3-torus. 
Due to Meeks and Scott \cite{MS86}, $\widetilde{X}/ \overline{G}$ admits a Euclidean structure 
(This assertion also follows from the geometrization of 3-manifolds and 3-orbifolds \cite{BLP05, CHK00, Thurston82}). 
Hence $\widetilde{X}$ admits a Euclidean structure such that the action of $\overline{G}$ on $\widetilde{X}$ is isometric. 
The actions $\tilde{\iota}_{*}(\alpha_{0})$ and $\alpha_{1}$ 
of $G$ on the Euclidean torus $\widetilde{X}$ are isometric. 
Since they are conjugate by a self-homeomorphism of $\widetilde{X}$ isotopic to the identity, 
we have $\tilde{\iota}_{*}(\alpha_{0}) = \alpha_{1}$ by Lemma~\ref{lem:translation}. 
\end{proof}

We show that cover-isotopy implies isotopy for links in the solid torus. 

\begin{thm}
\label{thm:1-isotopy}
Let $L_{0}$ and $L_{1}$ be links in $X = S^{1} \times D^{2}$. 
Let $P \colon \widetilde{X} \to X$ be a finite covering map. 
Suppose that the lifts $\widetilde{L_{0}} = P^{-1}(L_{0})$ and $\widetilde{L_{1}} = P^{-1}(L_{1})$ are isotopic in $\widetilde{X}$. 
Then $L_{0}$ and $L_{1}$ are isotopic in $X$. 
\end{thm}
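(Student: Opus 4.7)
The plan is to induct on the number of JSJ tori in the complement $X\cut L_0$, after first reducing to the non-split case. For this reduction I would use Lemma~\ref{lem:prime} to write each $L_i$ as the disjoint union of a non-split part and components sitting in disjoint 3-balls. The isotopy of lifts pairs up the preimages of these balls (each a disjoint union of 3-balls, since 3-balls are simply connected) and pairs the split summands up to the deck action; within any 3-ball $B\subset X$ the cover $P$ restricts trivially, so cover-isotopy inside $B$ coincides with ordinary isotopy. It therefore suffices to treat the case where $L_0$ and $L_1$ are non-split.

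For the base case, when $X\cut L_0$ has no JSJ tori, Proposition~\ref{prop:1-class} leaves only the hyperbolic case (which is Theorem~\ref{thm:hyp-isotopy}) or the Seifert fibered case. In the latter, Proposition~\ref{prop:1-seifert} forces $L_0\in\{T_0(p,q),T_1(p,q)\}$. I would then apply Lemma~\ref{lem:jsj-cover}, noting that the exceptional $S^{1} \tilde{\times} S^{1} \tilde{\times} I$ and Sol pieces do not arise inside a solid-torus link complement, to conclude that $\widetilde{X}\cut\widetilde{L_0}$ is also Seifert fibered with no JSJ tori. Hence $\widetilde{L_0}$, and therefore $\widetilde{L_1}$, is again of $T_l$-type, so $L_1$ lies in the same family. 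A direct parametric computation expresses the $T_l$-type of a lift in terms of $(l,p,q)$ and the degree of $P$, and the invariants distinguishing torus links in Lemma~\ref{lem:1-t-link} (component number and winding around the core) lift faithfully; equality of the lifted invariants then forces $(l_0,p_0,q_0)=(l_1,p_1,q_1)$.

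For the inductive step, let $M_i$ be the outermost JSJ piece of $X\cut L_i$. Any homeomorphism $\tilde{f}\colon \widetilde{X}\cut\widetilde{L_0}\to\widetilde{X}\cut\widetilde{L_1}$ coming from an isotopy of the lifts must send $\partial\widetilde{X}$ to itself, so by Lemma~\ref{lem:jsj-cover} and uniqueness of JSJ tori I may isotope $\tilde{f}$ to identify $\widetilde{M_0}$ with $\widetilde{M_1}$. By Lemma~\ref{lem:1-outermost}, each $M_i$ re-embeds as $X\cut K_i$, and by Proposition~\ref{prop:1-jsj} the link $K_i$ is a torus link $T_l(p,q)$ or hyperbolic. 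The abstract cover $\widetilde{M_i}\to M_i$ extends through the unknotted solid-torus fillings of the re-embedding to a finite cover $P'\colon X\to X$, and under $P'$ the preimages of $K_0$ and $K_1$ are related by a homeomorphism isotopic to the identity inherited from $\tilde{f}$. The base case applied to $K_0,K_1$ under $P'$ then gives an ambient isotopy of $X$ taking $K_0$ to $K_1$, hence $M_0$ to $M_1$. The inner pieces of $X\cut L_i$ cut off by the JSJ tori of $M_i$ are either links in smaller solid tori (where the induction hypothesis applies, since they have strictly fewer JSJ tori) or, in the case of a knotted hole ball (cf.\ Remark~\ref{rem:splice}), links sitting inside a 3-ball in $X$ on which $P$ restricts trivially, so cover-isotopy automatically becomes isotopy. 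Gluing the inner isotopies to the outer one then produces a global isotopy $L_0\simeq L_1$.

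The main obstacle is the construction of the extended cover $P'$ and the verification that $\tilde{f}$ descends, up to isotopy, to a cover-isotopy between the re-embedded $K_0$ and $K_1$. In particular, when a JSJ torus of $M_i$ bounds a knotted hole ball, the re-embedding replaces that region with an unknotted solid torus, and I would need to check that the covering data restricted to the JSJ torus depends only on the abstract inclusion $M_i\hookrightarrow X$ rather than on whether the complementary region is knotted; this should follow because the cover is pulled back from the abelian group $\pi_1(X)=\bbZ$ via $H_1$, so the covering data on an embedded torus is determined by its longitude alone. A secondary delicate point is the compatibility of framings on the JSJ tori after isotoping $K_0$ to $K_1$, which amounts to adjusting meridians before gluing in the inner pieces.
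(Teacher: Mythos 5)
Your proposal follows essentially the same route as the paper's proof: reduce to non-split links via the prime decomposition (Lemma~\ref{lem:prime}), induct on the JSJ complexity, handle the base case by Mostow rigidity (Theorem~\ref{thm:hyp-isotopy}) together with the classification of the links $T_{l}(p,q)$, and in the inductive step re-embed the outermost JSJ piece in the style of Budney (Lemma~\ref{lem:1-outermost}) and extend the isotopy inward over the complementary solid tori and the disjointly lifted knotted hole balls. The one place where you are glibber than the paper is the Seifert fibered base case: the invariants of Lemma~\ref{lem:1-t-link} do not ``lift faithfully'' verbatim, since when $p_{0} \mid nq_{0}$ but $p_{0} \nmid q_{0}$ the lift $T_{0}(p_{0}, nq_{0})$ becomes isotopic to a link of type $T_{1}$, and one must explicitly rule out that this coincides with the lift of some $T_{1}(p_{1},q_{1})$ --- exactly the check carried out in Lemma~\ref{lem:1-seifert-isotopy} --- after which your argument matches the paper's.
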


We give an example to note. 
Let $L_{0} = T_{0}(2,0)$ and $L_{1} = T_{0}(2,1)$. 
Since the numbers of their components are different, 
there is no self-homeomorphism of $X$ 
that maps $L_{0}$ to $L_{1}$. 
Suppose that $\widetilde{X}$ is the double cover of $X$. 
Then there is a self-homeomorphism of $\widetilde{X}$ 
that maps $\widetilde{L_{0}} = T_{0}(2,0)$ to $\widetilde{L_{1}} = T_{0}(2,2)$ as shown in Figure~\ref{fig5}. 
However, they are not isotopic in fixed $\widetilde{X}$. 
Hence this is not a counterexample of Theorem~\ref{thm:1-isotopy}. 
There are such examples of hyperbolic links as shown in Figure~\ref{fig6}.

\begin{figure}[ht]
\centerline{\includegraphics[width=5in]{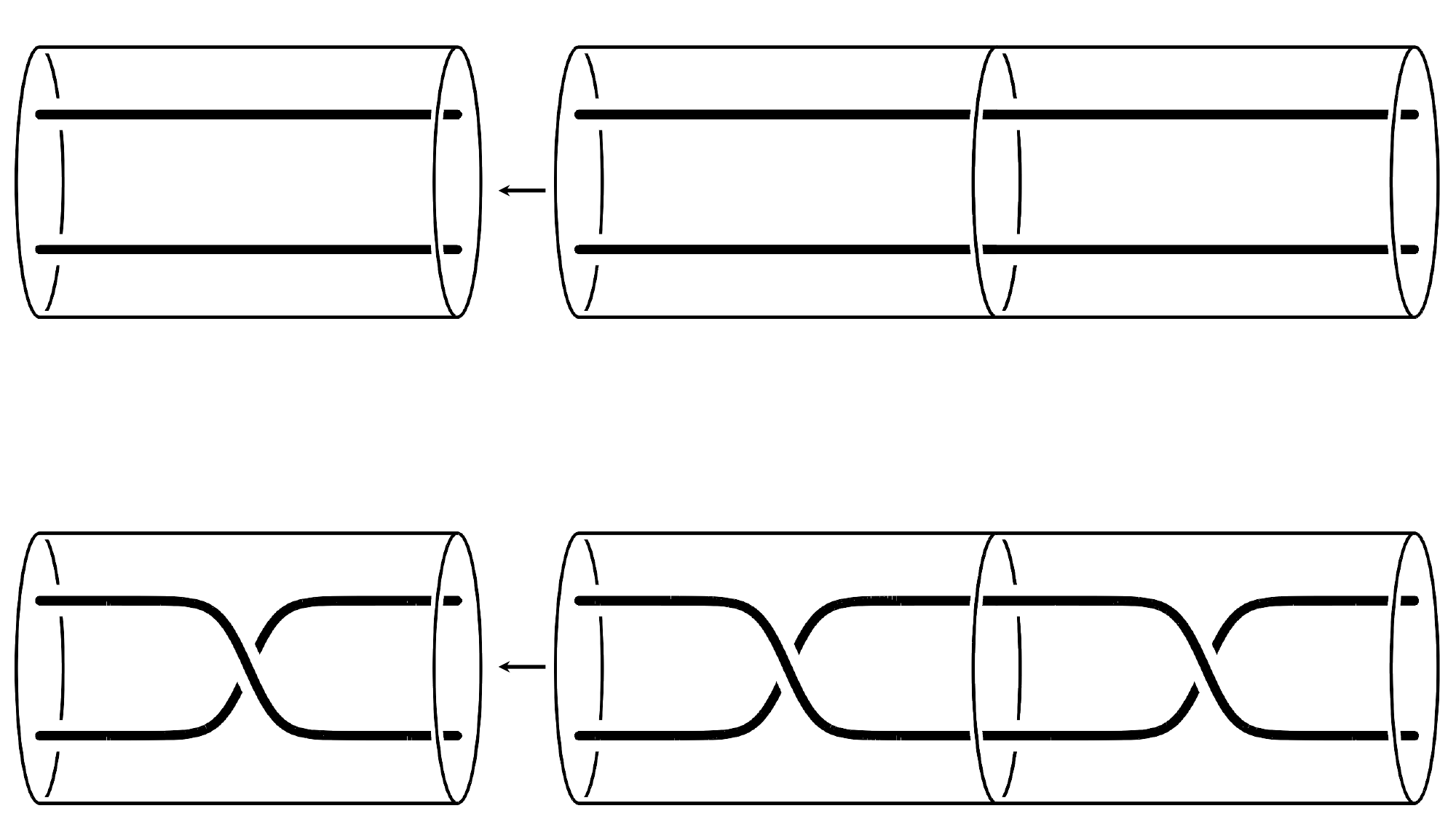}}
\vspace*{8pt}
\caption{\label{fig5} 
Seifert fibered inks in the solid torus with homeomorphic lifts.}
\end{figure}

\begin{figure}[ht]
\centerline{\includegraphics[width=5in]{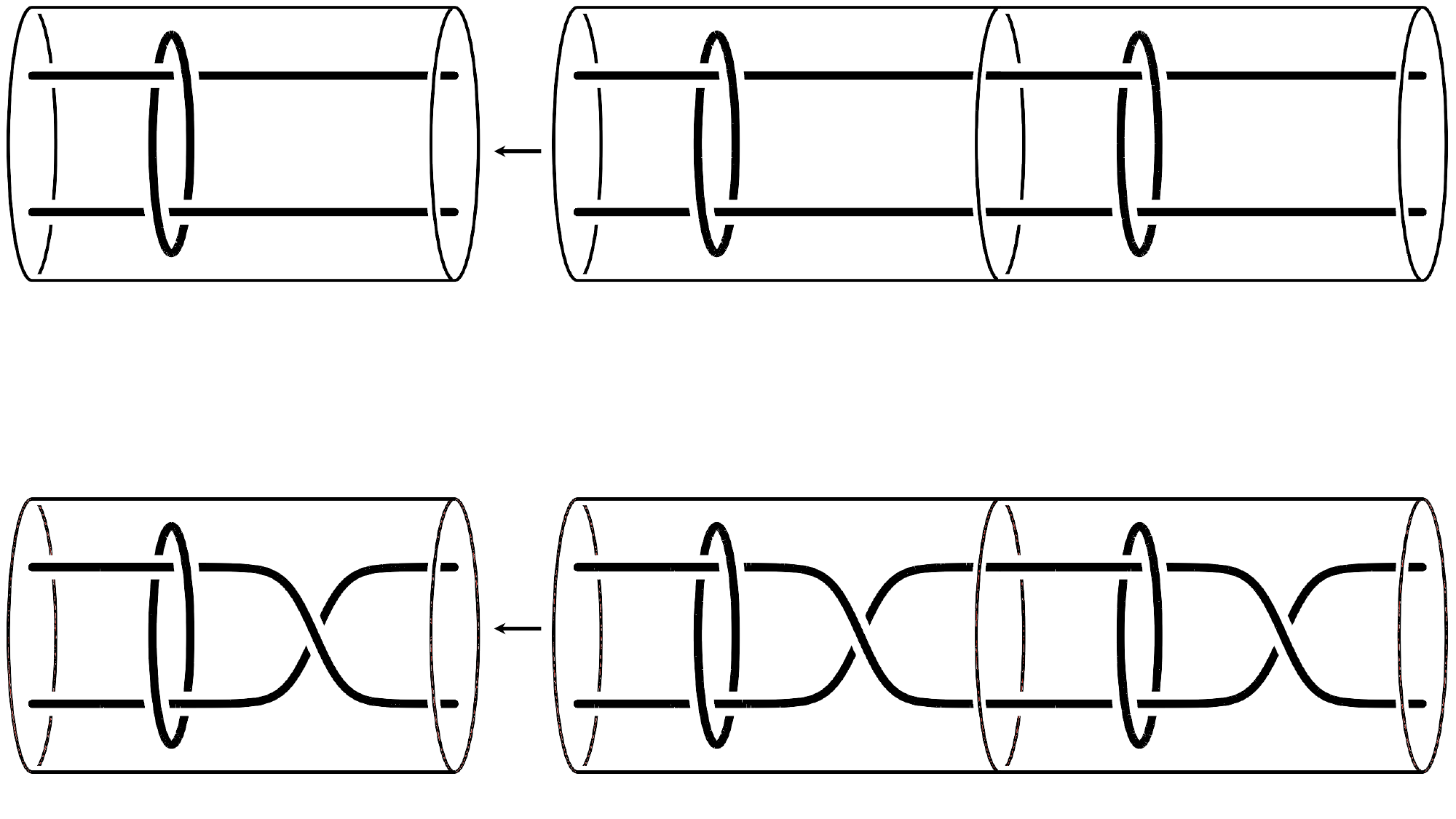}}
\vspace*{8pt}
\caption{\label{fig6} 
Hyperbolic links in the solid torus with homeomorphic lifts.}
\end{figure}

Suppose that $n$ is the degree of the covering map $P \colon \widetilde{X} \to X$. 

\begin{lem}
\label{lem:1-seifert-isotopy}
Theorem~\ref{thm:1-isotopy} holds 
in the case that $L_{0}$ and $L_{1}$ are Seifert fibered. 
\end{lem}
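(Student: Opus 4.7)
The plan is as follows. Since $\pi_{1}(X) \cong \bbZ$, it suffices to treat a connected cover, in which case $P\colon \widetilde{X}\to X$ is the $n$-fold cyclic cover for some $n \geq 1$ and $\widetilde{X}$ is again homeomorphic to $S^{1} \times D^{2}$. By Proposition~\ref{prop:1-seifert}, each Seifert fibered link $L_{i}$ is isotopic to some model link $T_{l_{i}}(p_{i}, q_{i})$; by Lemma~\ref{lem:1-t-link} we may normalize so that $p_{i} \geq 0$, and $p_{i} \nmid q_{i}$ whenever $l_{i} = 0$.

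The central step is to identify the lifts explicitly. Since the Seifert fibration on the complement of $L_{i}$ lifts to $\widetilde{X} \cut \widetilde{L_{i}}$, the link $\widetilde{L_{i}}$ is itself Seifert fibered in $\widetilde{X}$, and by Proposition~\ref{prop:1-seifert} it is isotopic to some $T_{l}(P, Q)$. A direct calculation using $\pi_{1}(\widetilde{X}) = n\bbZ \subset \bbZ = \pi_{1}(X)$ and the explicit parameterization of $T_{l}(p, q)$ shows that $P^{-1}(T_{0}(p, q))$ is isotopic to $T_{0}(p, qn)$, and $P^{-1}(T_{1}(p, q))$ is isotopic to $T_{1}(p, qn)$, in $\widetilde{X}$. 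Concretely, writing $d = \gcd(p, q)$, $p' = p/d$, and $q' = q/d$, a single component of $T_{0}(p, q)$ represents $p'$ times the generator of $\pi_{1}(X)$, so its preimage in $\widetilde{X}$ is a disjoint union of $\gcd(n, p')$ parallel torus knots of type $(p'/\gcd(n, p'),\ q'n/\gcd(n, p'))$; reassembling over the $d$ original components yields $T_{0}(p, qn)$. The core component of $T_{1}(p, q)$ lifts to the core of $\widetilde{X}$, giving the $T_{1}$ formula.

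Finally, assume $\widetilde{L_{0}}$ and $\widetilde{L_{1}}$ are isotopic in $\widetilde{X}$, so that $T_{l_{0}}(p_{0}, q_{0} n)$ and $T_{l_{1}}(p_{1}, q_{1} n)$ are isotopic. When both lifts are already in the normalized form of Lemma~\ref{lem:1-t-link} (which holds when $l_{i} = 1$, or when $l_{i} = 0$ with $p_{i} \nmid q_{i} n$), the lemma directly yields $l_{0} = l_{1}$, $p_{0} = p_{1}$, and $q_{0} = q_{1}$. The principal subtlety arises when $l_{i} = 0$ with $p_{i} \nmid q_{i}$ but $p_{i} \mid q_{i} n$: then $T_{0}(p_{i}, q_{i} n)$ renormalizes as $T_{1}(p_{i} - 1,\ (p_{i}-1) q_{i} n / p_{i})$, and the main technical hurdle is to show that this renormalization creates no spurious coincidences. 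A short divisibility argument based on $\gcd(p_{i}, p_{i} - 1) = 1$ rules out the ``mixed'' case $l_{0} \neq l_{1}$ (since equality of renormalized parameters would force $p_{i} \mid q_{i}$, contradicting normalization), and in the remaining case $l_{0} = l_{1} = 0$ with both lifts renormalized as $T_{1}$-type, the same equation again yields $(p_{0}, q_{0}) = (p_{1}, q_{1})$. Therefore $L_{0}$ and $L_{1}$ are isotopic in $X$.
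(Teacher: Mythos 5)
Your proof is correct and follows essentially the same route as the paper's: normalize $L_i$ to $T_{l_i}(p_i,q_i)$ via Proposition~\ref{prop:1-seifert} and Lemma~\ref{lem:1-t-link}, identify the lift of $T_l(p,q)$ under the $n$-fold cyclic cover as $T_l(p,qn)$, and then compare via Lemma~\ref{lem:1-t-link}, using $\gcd(p_i,p_i-1)=1$ to dispose of the case where $p_i\nmid q_i$ but $p_i\mid q_in$ forces a renormalization to $T_1$-type. Your write-up is in fact slightly more explicit than the paper's on both the component count for the lift and the case analysis after renormalization, but the underlying argument is the same.
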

\begin{proof}
Proposition~\ref{prop:1-seifert} implies that 
$L_{0}$ is isotopic to $T_{l_{0}}(p_{0}, q_{0})$, 
and $L_{1}$ is isotopic to $T_{l_{1}}(p_{1}, q_{1})$. 
We may assume that $p_{0}, p_{1} \geq 0$, and $p_{i}$ does not divide $q_{i}$ if $l_{i} =0$. 
Then $\widetilde{L_{0}}$ is isotopic to $T_{l_{0}}(p_{0}, nq_{0})$, 
and $\widetilde{L_{1}}$ is isotopic to $T_{l_{1}}(p_{1}, nq_{1})$. 
Hence $T_{l_{0}}(p_{0}, nq_{0})$ and $T_{l_{1}}(p_{1}, nq_{1})$ are isotopic. 
If $l_{0} = 0$ and $p_{0}$ divides $nq_{0}$, 
then $T_{0}(p_{0}, nq_{0})$ is isotopic to $T_{1}(p_{0}-1, n(p_{0}-1)q_{0}/p_{0})$. 
In this case, since $p_{0}$ does not divide $(p_{0}-1)q_{0}$, 
$T_{1}(p_{0}-1, n(p_{0}-1)q_{0}/p_{0})$ is not the $n$-sheeted cover of $T_{1}(p_{1}, q_{1})$. 
Hence, in any case, we have $l_{0} = l_{1}$, $p_{0} = p_{1}$, and $q_{0} = q_{1}$ by Lemma~\ref{lem:1-t-link}. 
Theorefore $L_{0}$ and $L_{1}$ are isotopic. 
\end{proof}

\begin{proof}[Proof of Theorem~\ref{thm:1-isotopy}]
We first consider the case that $L_{0}$ is a split link. 
In other words, $X \cut L_{0}$ is reducible. 
Suppose that 
the link $L_{0}$ maximally splits to $L_{0}^{\prime}$, $L_{0,1}$, \dots, $L_{0,m}$, 
where $L_{0}^{\prime}$ is a (possibly empty) non-split link. 
Then $\widetilde{L_{0}}$ maximally splits to 
links $\widetilde{L_{0}^{\prime}}$ and $L_{0,j}^{k}$ for $1 \leq j \leq m$ and $1 \leq k \leq n$, 
where $\widetilde{L_{0}^{\prime}}$ is the lift of $L_{0}^{\prime}$, 
and $L_{0,j}^{k}$ is a copy of $L_{0,j}$. 
The link $\widetilde{L_{0}^{\prime}}$ is a non-split link. 
In the same manner, 
the link $L_{1}$ maximally splits to $L_{1}^{\prime}$, $L_{1,1}$, \dots, $L_{1,m^{\prime}}$, 
and the lift $\widetilde{L_{1}}$ maximally splits to 
$\widetilde{L_{1}^{\prime}}$ and $L_{1,j}^{k}$ for $1 \leq j \leq m^{\prime}$ and $1 \leq k \leq n$. 
Since $\widetilde{L_{0}}$ and $\widetilde{L_{1}}$ are isotopic in $\widetilde{X}$, 
Lemma~\ref{lem:prime} implies that 
$\widetilde{L_{0}^{\prime}}$ and $\widetilde{L_{1}^{\prime}}$ are isotopic, 
$m = m^{\prime}$, and $L_{0,j}^{k}$ and $L_{1,j}^{k}$ are isotopic 
after possibly permuting the indices. 
Hence $L_{0,j}$ and $L_{1,j}$ are isotopic. 
If $L_{0}^{\prime}$ and $L_{1}^{\prime}$ are isotopic, 
then $L_{0}$ and $L_{1}$ are isotopic. 
Therefore we may assume that $L_{0}$ and $L_{1}$ are non-split links. 

We consider the JSJ decomposition of the irreducible 3-manifolds 
$X \cut L_{0}$ and $X \cut L_{1}$. 
The JSJ decomposition of 
$\widetilde{X} \cut \widetilde{L_{0}} \simeq \widetilde{X} \cut \widetilde{L_{1}}$ 
is their lifts by Lemma~\ref{lem:jsj-cover}. 
We show the assertion 
by induction on the number $N$ of JSJ pieces of $\widetilde{X} \cut \widetilde{L_{0}}$. 
We first suppose that $\widetilde{X} \cut \widetilde{L_{0}} 
(\simeq \widetilde{X} \cut \widetilde{L_{1}})$ has no essential tori. 
Then the link $\widetilde{L_{0}}$ is either Seifert fibered or hyperbolic, 
and the links $L_{0}$ and $L_{1}$ are in the same class. 
In this case, the assertion holds 
by Theorem~\ref{thm:hyp-isotopy} and Lemma~\ref{lem:1-seifert-isotopy}. 

Suppose that $N > 1$ 
and the assertion holds for any degree $n$ of the covering map 
if the number of JSJ pieces is less than $N$. 
Let $M_{0}$ and $M_{1}$ respectively denote 
the outermost JSJ pieces of $X \cut L_{0}$ and $X \cut L_{1}$. 
Their preimages $\widetilde{M_{0}}$ and $\widetilde{M_{1}}$ in $\widetilde{X}$ are isotopic in $\widetilde{X}$. 
The 3-manifolds $M_{0}$ and $M_{1}$ can be re-embedded as the complements of links in $X$ 
by Lemma~\ref{lem:1-outermost}. 
Here a knotted hole ball for $M_{i}$ is lifted to its disjoint $n$ copies in $\widetilde{X}$. 
The 3-manifolds $\widetilde{M_{0}}$ and $\widetilde{M_{1}}$ are equivariantly re-embedded. 
Since $M_{0}$ and $M_{1}$ are either Seifert fibered or hyperbolic, 
$M_{0}$ and $M_{1}$ are isotopic in $X$ 
by Theorem~\ref{thm:hyp-isotopy} and Lemma~\ref{lem:1-seifert-isotopy}. 
Note that if $M_{i}$ is Seifert fibered 
and has a boundary component bounding a knotted hole ball, 
then $M_{i}$ is re-embedded as $X \cut T_{1}(0,q)$. 

We show that the isotopy between $M_{0}$ and $M_{1}$ 
extends to an isotopy between $X \cut L_{0}$ and $X \cut L_{1}$. 
Since the knotted hole balls are lifted disjointly, 
the isotopy extends to the knotted hole balls 
similarly to the case for split links. 
The remaining regions are solid tori by Lemma~\ref{lem:1-torus}. 
By the assumption of induction, 
the isotopy in $\widetilde{X}$ induces isotopies between links in these solid tori. 
Therefore $L_{0}$ and $L_{1}$ are isotopic. 
\end{proof}

We show that cover-isotopy implies isotopy for links in the thickened torus. 

\begin{thm}
\label{thm:2-isotopy}
Let $L_{0}$ and $L_{1}$ be links in $X = T^{2} \times I$. 
Let $P \colon \widetilde{X} \to X$ be a finite covering map. 
Suppose that the lifts $\widetilde{L_{0}} = P^{-1}(L_{0})$ and $\widetilde{L_{1}} = P^{-1}(L_{1})$ are isotopic in $\widetilde{X}$. 
Then $L_{0}$ and $L_{1}$ are isotopic in $X$. 
\end{thm}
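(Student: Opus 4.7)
The proof follows the template of Theorem~\ref{thm:1-isotopy}, with adjustments to handle the two new features of the thickened torus: the presence of layering tori and the fact that $X = T^{2} \times I$ has two boundary components. First I reduce to the non-split case via the prime decomposition argument of Lemma~\ref{lem:prime}: split components lie in disjoint balls, which lift to disjoint balls in $\tilde{X}$, and the isotopy between $\tilde{L}_{0}$ and $\tilde{L}_{1}$ induces isotopies of corresponding copies in these balls that descend to isotopies of the split components in $X$. Before beginning the induction I verify that Lemma~\ref{lem:jsj-cover} applies without exception to $X \cut L_{0}$: this complement is not a Sol $3$-manifold because it has non-empty boundary, and by Proposition~\ref{prop:2-jsj} no JSJ piece of $X \cut L_{0}$ is the twisted $I$-bundle $S^{1} \tilde{\times} S^{1} \tilde{\times} I \cong M(0,1;1/2,1/2)$, since its invariants satisfy $\alpha_{1}\beta_{2} - \alpha_{2}\beta_{1} = 1 \cdot 2 - 1 \cdot 2 = 0$, not $\pm 1$. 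Hence the JSJ tori of $\tilde{X} \cut \tilde{L}_{0}$ are precisely the preimages of those of $X \cut L_{0}$.

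The main argument is induction on the number $N$ of JSJ pieces of $\tilde{X} \cut \tilde{L}_{0}$. For the base case $N = 1$, the links $L_{0}$ and $L_{1}$ are either both hyperbolic or both Seifert fibered. The hyperbolic subcase is exactly Theorem~\ref{thm:hyp-isotopy}. For the Seifert fibered subcase, I would prove an analog of Lemma~\ref{lem:1-seifert-isotopy}: by Proposition~\ref{prop:2-seifert}, $L_{i} \simeq T_{2}(p_{i}, q_{i})$, and a direct tracing of the covering (which corresponds to a sublattice $\Lambda \subset \bbZ^{2}$ of index $n$) shows that $\tilde{L}_{i}$ is isotopic to some $T_{2}(\tilde{p}_{i}, \tilde{q}_{i})$ in $\tilde{X}$ whose parameters are the $\Lambda$-basis coordinates of $n \cdot (p_{i}, q_{i}) \in \Lambda$. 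Applying Lemma~\ref{lem:2-t-link} in $\tilde{X}$ gives $(\tilde{p}_{0}, \tilde{q}_{0}) = (\tilde{p}_{1}, \tilde{q}_{1})$ and dividing by $n$ yields $(p_{0}, q_{0}) = (p_{1}, q_{1})$, so $L_{0}$ and $L_{1}$ are isotopic.

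For the inductive step ($N > 1$), I split into two subcases depending on whether $L_{0}$ is layered. In the non-layered subcase, the outermost JSJ piece of $X \cut L_{0}$ contains both boundary components of $X$, and similarly for $L_{1}$. Using Lemma~\ref{lem:2-outermost} I re-embed these outermost pieces as complements of links in $X$, apply the base case to obtain an isotopy between them, and then extend to the remainder: solid-torus regions are handled by Theorem~\ref{thm:1-isotopy} (applied to sublinks whose lifts have strictly fewer JSJ pieces), and knotted hole balls lift disjointly and are handled exactly as in the split case. In the layered subcase, I pick a layering JSJ torus $T$ of $X \cut L_{0}$; by Lemma~\ref{lem:jsj-cover} its preimage in $\tilde{X}$ consists of layering tori at the same height as $T$, and the corresponding JSJ tori of $\tilde{X} \cut \tilde{L}_{1}$ are also layering, so after a vertical isotopy of $\tilde{X}$ they can be aligned with those of $\tilde{L}_{0}$. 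I then split $X$ along $T$ into two sub-thickened tori, each containing sublinks whose lifts have strictly fewer JSJ pieces in $\tilde{X}$, and apply the inductive hypothesis to each piece.

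The principal obstacle I anticipate is the layered subcase, where one must verify carefully that after the alignment of layering tori in $\tilde{X}$ the isotopy $\tilde{L}_{0} \simeq \tilde{L}_{1}$ restricts compatibly to isotopies of sublinks in each sub-thickened torus, and that this alignment descends correctly to $X$. This hinges on the horizontal nature of finite covers of $T^{2} \times I$ (which arise from finite-index sublattices of $\bbZ^{2}$ and leave the $I$-coordinate unchanged) together with the uniqueness of the JSJ decomposition up to isotopy.
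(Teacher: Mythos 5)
Your proposal is correct and follows essentially the same route as the paper: reduce to non-split links, lift the JSJ decomposition via Lemma~\ref{lem:jsj-cover}, peel off layering tori (the paper does all layers at once using uniqueness of the decomposition, you do one at a time by induction, but these are the same recursion), handle the non-layered case through the re-embedded outermost JSJ piece via Theorem~\ref{thm:hyp-isotopy} and the $T_{2}(p,q)$ classification (your Seifert computation is exactly Lemma~\ref{lem:2-seifert-isotopy}), and extend over the solid-torus and knotted-hole-ball regions using Theorem~\ref{thm:1-isotopy}. The only point treated more lightly than in the paper is that the re-embedding of the outermost pieces must be done equivariantly so that the covering structure persists, but this is the same observation the paper imports from the proof of Theorem~\ref{thm:1-isotopy}.
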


Suppose that 
the covering space $\widetilde{X}$ corresponds to the subgroup 
of $\pi_{1}(X) = \pi_{1}(\bbR^{2} / \bbZ^{2})= \bbZ^{2}$ 
generated by $\bm{a} = (a_{1}, a_{2})^{\mathsf{T}}$ and $\bm{b} = (b_{1}, b_{2})^{\mathsf{T}}$. 
($(\cdot)^{\mathsf{T}}$ indicates the transpose.) 
The degree of the covering map $P$ is $|\det(\bm{a} \ \bm{b})|$. 

\begin{lem}
\label{lem:2-seifert-isotopy}
Theorem~\ref{thm:2-isotopy} holds 
in the case that $L_{0}$ and $L_{1}$ are Seifert fibered. 
\end{lem}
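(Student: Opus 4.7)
The plan is to put both $L_i$ in the normal form $T_2(p_i,q_i)$ using Proposition~\ref{prop:2-seifert}, compute the lifts $\widetilde{L_i}$ explicitly as $T_2$-links in $\widetilde X$, and then apply Lemma~\ref{lem:2-t-link} on the cover to match the parameters.

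Write $\Lambda = \langle\bm a,\bm b\rangle\subset\bbZ^2$, so that $\widetilde X \cong (\bbR^2/\Lambda)\times I$ and $n=[\bbZ^2:\Lambda]$. For each $i\in\{0,1\}$ I would set $d_i=\gcd(p_i,q_i)$ and $v_i=(p_i/d_i,q_i/d_i)\in\bbZ^2$, which is primitive, and assume $p_i\ge 0$ as in Lemma~\ref{lem:2-t-link}. Since every component of $T_2(p_i,q_i)$ lies on $T^2\times\{1/2\}$ and represents $v_i$, its lift sits on the horizontal torus $\widetilde{T^2}\times\{1/2\}$ in $\widetilde X$. A standard covering-space argument then gives: if $k_i$ denotes the order of $v_i$ in $\bbZ^2/\Lambda$ (the smallest positive integer with $k_iv_i\in\Lambda$), each component of $L_i$ lifts to $n/k_i$ parallel closed curves, each representing $k_iv_i\in\Lambda$, so $\widetilde{L_i}$ has $d_in/k_i$ components in total.

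The main technical step I foresee is showing that $k_iv_i$ is primitive in $\Lambda$; this is what lets me rewrite $\widetilde{L_i}$ in the basis $\{\bm a,\bm b\}$ as $T_2(\tilde p_i,\tilde q_i)$ with $\gcd(\tilde p_i,\tilde q_i)=d_in/k_i$. To prove it, suppose $k_iv_i=mw$ with $w\in\Lambda$ and $m\ge 1$, and write $w=\ell w'$ for $w'$ primitive in $\bbZ^2$. Then $k_iv_i=m\ell w'$ in $\bbZ^2$, and the primitivity of $v_i$ there forces $w'=\pm v_i$ and $k_i=m\ell$; so $\ell v_i\in\Lambda$, and the minimality of $k_i$ gives $\ell\ge k_i$, hence $m=1$.

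Finally, the hypothesis $\widetilde{L_0}\sim\widetilde{L_1}$, together with Lemma~\ref{lem:2-t-link} applied inside $\widetilde X$, will yield two equalities: $d_0n/k_0=d_1n/k_1$ and $k_0v_0=\pm k_1v_1$ in $\Lambda\subset\bbZ^2$. Reading the second inside $\bbZ^2$, primitivity of $v_0,v_1$ and positivity of $k_0,k_1$ force $v_0=\pm v_1$ and $k_0=k_1$; substituting into the first gives $d_0=d_1$. Hence $(p_0,q_0)=(p_1,q_1)$ up to an overall sign, which does not distinguish the unoriented links, so $L_0\sim L_1$ in $X$. Once the primitivity step is in place, everything else is bookkeeping between the two lattices.
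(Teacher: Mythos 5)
Your proposal follows the same route as the paper's proof: normalize $L_i$ to $T_2(p_i,q_i)$ via Proposition~\ref{prop:2-seifert}, identify the lift $\widetilde{L_i}$ as a $T_2$-link in the basis $(\bm a,\bm b)$ of $\Lambda$, and invoke Lemma~\ref{lem:2-t-link} in $\widetilde X$ to match parameters. The only difference is that you spell out the covering-space bookkeeping (the component count $d_in/k_i$ and the primitivity of $k_iv_i$ in $\Lambda$) that the paper compresses into the determinant formulas for $(\tilde p_i,\tilde q_i)$; your argument is correct.
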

\begin{proof}
Proposition~\ref{prop:2-seifert} implies that 
$L_{0}$ is isotopic to $T_{2}(p_{0}, q_{0})$, 
and $L_{1}$ is isotopic to $T_{2}(p_{1}, q_{1})$. 
We may assume that $p_{0}, p_{1} \geq 0$. 
We take $(\bm{a}, \bm{b})$ as a basis for $\pi_{1}(\widetilde{X})$. 
Then $\widetilde{L_{i}}$ for $i=0,1$ is isotopic to 
$T_{2}(\tilde{p}_{i}, \tilde{q}_{i})$, 
where 
\[
\tilde{p}_{i} = \det(\bm{x}_{i} \ \bm{b}), \
\tilde{q}_{i} = \det(\bm{a} \ \bm{x}_{i}), \
\bm{x}_{i} = (p_{i}, q_{i})^{\mathsf{T}}. 
\]
Since $\widetilde{L_{0}}$ and $\widetilde{L_{1}}$ are isotopic, 
we have $p_{0} = p_{1}$ and $q_{0} = q_{1}$ 
by Lemma~\ref{lem:2-t-link}. 
Hence $L_{0}$ and $L_{1}$ are isotopic. 
\end{proof}

\begin{proof}[Proof of Theorem~\ref{thm:2-isotopy}]
In the same manner as Theorem~\ref{thm:1-isotopy}, 
we may assume that $L_{0}$ and $L_{1}$ are non-split links. 
The JSJ decomposition of 
$\widetilde{X} \cut \widetilde{L_{0}} \simeq \widetilde{X} \cut \widetilde{L_{1}}$ 
is lifts of those of $X \cut L_{0}$ and $X \cut L_{1}$ 
by Lemma~\ref{lem:jsj-cover}. 
Suppose that $L_{0}$ is decomposed into $L_{0,1}, \dots, L_{0,m}$ along the layering JSJ tori, 
where $L_{0,j}$ are arranged in the order of coordinates for $I$. 
Then $\widetilde{L_{0}}$ is decomposed into their lifts $\widetilde{L_{0,1}}, \dots, \widetilde{L_{0,m}}$ 
along the layering JSJ tori. 
The same applies to $L_{1}$. 
Since the decomposition along the layering JSJ tori is unique up to isotopy, 
$\widetilde{L_{0,j}}$ and $\widetilde{L_{1,j}}$ are isotopic for any $1 \leq j \leq m$. 
Therefore we may assume that $L_{0}$ and $L_{1}$ are not layered by JSJ tori. 

Let $M_{0}$ and $M_{1}$ respectively denote 
the outermost JSJ pieces of $X \cut L_{0}$ and $X \cut L_{1}$ 
as in Lemma~\ref{lem:2-outermost}. 
Their preimages $\widetilde{M_{0}}$ and $\widetilde{M_{1}}$ in $\widetilde{X}$ 
are isotopic in $\widetilde{X}$. 
In the same manner as Theorem~\ref{thm:1-isotopy}, 
$\widetilde{M_{0}}$ and $\widetilde{M_{1}}$ are equivariantly re-embedded in the thickened torus. 
Since $M_{0}$ and $M_{1}$ are either Seifert fibered or hyperbolic, 
$M_{0}$ and $M_{1}$ are isotopic in $X$ 
by Theorem~\ref{thm:hyp-isotopy} and Lemma~\ref{lem:2-seifert-isotopy}. 

Since the remaining regions are knotted hole balls and solid tori, 
the isotopy between $M_{0}$ and $M_{1}$ 
extends to an isotopy between $X \cut L_{0}$ and $X \cut L_{1}$ 
by Theorem~\ref{thm:1-isotopy}. 
Therefore $L_{0}$ and $L_{1}$ are isotopic. 
\end{proof}

We show that cover-isotopy implies isotopy for links in the 3-torus.

\begin{thm}
\label{thm:3-isotopy}
Let $L_{0}$ and $L_{1}$ be links in $X = T^{3}$. 
Let $P \colon \widetilde{X} \to X$ be a finite covering map. 
Suppose that the lifts $\widetilde{L_{0}} = P^{-1}(L_{0})$ and $\widetilde{L_{1}} = P^{-1}(L_{1})$ are isotopic in $\widetilde{X}$. 
Then $L_{0}$ and $L_{1}$ are isotopic in $X$. 
\end{thm}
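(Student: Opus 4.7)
The plan is to mirror the proofs of Theorems~\ref{thm:1-isotopy} and \ref{thm:2-isotopy}, adapting each step to $X = T^{3}$. Since $\pi_{1}(T^{3}) = \bbZ^{3}$ is abelian, every finite covering map $P \colon \widetilde{X} \to X$ is regular, so the hyperbolic rigidity machinery of Theorem~\ref{thm:hyp-isotopy} applies. First I would reduce to non-split links by Lemma~\ref{lem:prime}, exactly as in Theorem~\ref{thm:1-isotopy}. Next I would reduce to non-layered links: by Lemma~\ref{lem:3-torus}, cutting $T^{3}$ along its layering JSJ tori yields a disjoint union of thickened tori, each containing a sublink. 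The layering decomposition lifts through $P$, its uniqueness up to isotopy is preserved by Lemma~\ref{lem:jsj-cover}, and Theorem~\ref{thm:2-isotopy} applied inside each thickened-torus piece matches up the corresponding sublinks of $L_{0}$ and $L_{1}$.

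For the remaining case of non-split, non-layered $L_{0}$ and $L_{1}$, I would analyze the outermost JSJ piece $M_{i}$ of $X \cut L_{i}$. Lemma~\ref{lem:3-outermost} re-embeds $M_{i}$ as the complement of a link in $T^{3}$, equivariantly with respect to the covering, and Proposition~\ref{prop:3-jsj} tells us that $M_{i}$ is either hyperbolic or Seifert fibered. The hyperbolic case follows from Theorem~\ref{thm:hyp-isotopy}. For the Seifert fibered case I would prove an analogue of Lemma~\ref{lem:2-seifert-isotopy}: by Proposition~\ref{prop:3-seifert}, each $L_{i}$ is isotopic to $T_{3}(p_{i}, q_{i}, r_{i})$; if the cover $\widetilde{X}$ corresponds to the sublattice of $\bbZ^{3}$ generated by $(\bm{a}, \bm{b}, \bm{c})$, then a direct linear-algebraic computation, an analogue of Cramer's rule involving the $3 \times 3$ determinants built from $\bm{x}_{i} = (p_{i}, q_{i}, r_{i})^{\mathsf{T}}$ and the change-of-basis matrix $(\bm{a}\ \bm{b}\ \bm{c})$, expresses the lift $\widetilde{L_{i}}$ as $T_{3}(\tilde{p}_{i}, \tilde{q}_{i}, \tilde{r}_{i})$, where the assignment $(p_{i}, q_{i}, r_{i}) \mapsto (\tilde{p}_{i}, \tilde{q}_{i}, \tilde{r}_{i})$ is injective. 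Applying Lemma~\ref{lem:3-t-link} to the isotopic lifts then forces $(p_{0}, q_{0}, r_{0}) = (p_{1}, q_{1}, r_{1})$.

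Once $M_{0}$ and $M_{1}$ are shown to be isotopic in $X$, the isotopy must be extended over $T^{3} \setminus M_{i}$, which by Lemma~\ref{lem:3-outermost} is a disjoint union of solid tori and knotted hole balls. The sublinks in the solid tori are matched by Theorem~\ref{thm:1-isotopy}, and the arcs in the knotted hole balls are matched trivially, exactly as in the proofs of Theorems~\ref{thm:1-isotopy} and \ref{thm:2-isotopy}. The main obstacle I expect is the new Seifert fibered piece type $M(1, n; )$, which does not appear in the solid-torus or thickened-torus settings and corresponds to the genuinely 3-toric Seifert fibration whose base surface is a torus; the linear-algebraic bookkeeping for lifting $T_{3}(p, q, r)$ through a rank-3 sublattice has to be carried out carefully to track both the number of components $\gcd(p, q, r)$ and the direction of the regular fiber simultaneously, but once the parametrization of the lift is in hand, Lemma~\ref{lem:3-t-link} closes the argument.
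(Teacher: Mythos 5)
Your proposal is correct and follows essentially the same route as the paper: reduction to non-split links, splitting off the layered case via the layering JSJ tori and Theorem~\ref{thm:2-isotopy}, and then handling the outermost JSJ piece via Theorem~\ref{thm:hyp-isotopy} in the hyperbolic case and the determinant/Cramer's-rule computation (the paper's Lemma~\ref{lem:3-seifert-isotopy}) together with Lemma~\ref{lem:3-t-link} in the Seifert fibered case, before extending the isotopy over the complementary solid tori and knotted hole balls by Theorem~\ref{thm:1-isotopy}. The only detail worth adding is that in $T^{3}$ the layers along the layering tori are only ordered cyclically, so the matching of the sublinks $\widetilde{L_{0,j}}$ and $\widetilde{L_{1,j}}$ is up to a cyclic permutation of indices, as the paper notes.
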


Suppose that 
the covering space $\widetilde{X}$ corresponds to 
the subgroup of $\pi_{1}(X) = \pi_{1}(\bbR^{3} / \bbZ^{3})= \bbZ^{3}$ 
generated by $\bm{a} = (a_{1}, a_{2}, a_{3})^{\mathsf{T}}$, $\bm{b} = (b_{1}, b_{2}, b_{3})^{\mathsf{T}}$, and $\bm{c} = (c_{1}, c_{2}, c_{3})^{\mathsf{T}}$. 
The degree of the covering map $P$ is $|\det(\bm{a} \ \bm{b} \ \bm{c})|$. 
The following lemma is proven in the same manner as Lemma~\ref{lem:2-seifert-isotopy}. 

\begin{lem}
\label{lem:3-seifert-isotopy}
Theorem~\ref{thm:3-isotopy} holds 
in the case that $L_{0}$ and $L_{1}$ are Seifert fibered. 
\end{lem}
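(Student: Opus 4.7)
The plan is to transpose the argument of Lemma~\ref{lem:2-seifert-isotopy} into three dimensions, using the classification of Seifert fibered links in $T^{3}$ from Section~\ref{section:jsj}. First, since $T^{3} \cut L_{0}$ and $T^{3} \cut L_{1}$ are Seifert fibered, Proposition~\ref{prop:3-seifert} yields $L_{i} \simeq T_{3}(p_{i}, q_{i}, r_{i})$ for $i = 0,1$. Using the convention $T_{3}(p,q,r) = T_{3}(-p,-q,-r)$, I normalize so that $p_{i} \geq 0$ (and refine the sign convention for $q_{i}$ when $p_{i}=0$, etc.) so that the isotopy class of each $L_{i}$ is determined by the triple $(p_{i}, q_{i}, r_{i})$ via Lemma~\ref{lem:3-t-link}.

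Next, I compute the isotopy class of the lift $\widetilde{L_{i}}$ inside $\widetilde{X}$, which I identify with $T^{3} = \bbR^{3}/\bbZ^{3}$ through the basis $(\bm{a}, \bm{b}, \bm{c})$ of $\pi_{1}(\widetilde{X}) \subset \pi_{1}(X) = \bbZ^{3}$. Under this identification, $\widetilde{L_{i}}$ consists of parallel closed geodesics whose direction, originally $\bm{x}_{i} = (p_{i}, q_{i}, r_{i})^{\mathsf{T}}$ in the standard basis of $\bbZ^{3}$, is expressed in the new basis as $A^{-1}\bm{x}_{i}$, where $A = (\bm{a}\ \bm{b}\ \bm{c})$. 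Clearing the denominator $\det(A)$ and applying Cramer's rule, I conclude that $\widetilde{L_{i}}$ is isotopic to $T_{3}(\tilde{p}_{i}, \tilde{q}_{i}, \tilde{r}_{i})$ with
\[
\tilde{p}_{i} = \det(\bm{x}_{i}\ \bm{b}\ \bm{c}), \quad
\tilde{q}_{i} = \det(\bm{a}\ \bm{x}_{i}\ \bm{c}), \quad
\tilde{r}_{i} = \det(\bm{a}\ \bm{b}\ \bm{x}_{i}).
\]
Since $\widetilde{L_{0}}$ and $\widetilde{L_{1}}$ are isotopic by hypothesis, Lemma~\ref{lem:3-t-link} forces $(\tilde{p}_{0}, \tilde{q}_{0}, \tilde{r}_{0}) = (\tilde{p}_{1}, \tilde{q}_{1}, \tilde{r}_{1})$ after a matching sign normalization, and since $A$ is invertible, this is equivalent to $\bm{x}_{0} = \bm{x}_{1}$. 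Hence $(p_{0}, q_{0}, r_{0}) = (p_{1}, q_{1}, r_{1})$, so $L_{0}$ and $L_{1}$ are isotopic in $X = T^{3}$.

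I do not expect any real obstacle: this is a direct one-dimensional extension of the proof of Lemma~\ref{lem:2-seifert-isotopy}. The only places requiring any care are verifying the lift formula — a routine application of Cramer's rule for the change-of-coordinates induced by $A$ — and making the sign normalization uniform on both $L_{i}$ and $\widetilde{L_{i}}$ so that Lemma~\ref{lem:3-t-link} can be invoked at both ends of the argument.
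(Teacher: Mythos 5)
Your proposal is correct and follows essentially the same route as the paper's own proof: apply Proposition~\ref{prop:3-seifert} to identify each $L_{i}$ with $T_{3}(p_{i},q_{i},r_{i})$, compute the lift in the basis $(\bm{a},\bm{b},\bm{c})$ to obtain the same determinant formulas, and conclude via Lemma~\ref{lem:3-t-link}. Your explicit appeal to the invertibility of $A=(\bm{a}\ \bm{b}\ \bm{c})$ to pass from equality of the lifted triples back to $\bm{x}_{0}=\bm{x}_{1}$ is left implicit in the paper but is the right justification.
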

\begin{proof}
Proposition~\ref{prop:3-seifert} implies that 
$L_{0}$ is isotopic to $T_{3}(p_{0}, q_{0}, r_{0})$, 
and $L_{1}$ is isotopic to $T_{3}(p_{1}, q_{1}, r_{1})$. 
We may assume that $p_{0}, p_{1} \geq 0$. 
We take $(\bm{a}, \bm{b}, \bm{c})$ as a basis for $\pi_{1}(\widetilde{X})$. 
Then $\widetilde{L_{i}}$ for $i=0,1$ is isotopic to 
$T_{3}(\tilde{p}_{i}, \tilde{q}_{i}, \tilde{r}_{i})$, 
where 
\[
\tilde{p}_{i} = \det(\bm{x}_{i} \ \bm{b} \ \bm{c}), \
\tilde{q}_{i} = \det(\bm{a} \ \bm{x}_{i} \bm{c}), \
\tilde{r}_{i} = \det(\bm{a} \ \bm{b} \ \bm{x}_{i}), \
\bm{x}_{i} = (p_{i}, q_{i}, r_{i})^{\mathsf{T}}. 
\]
Since $\widetilde{L_{0}}$ and $\widetilde{L_{1}}$ are isotopic, 
we have $p_{0} = p_{1}$, $q_{0} = q_{1}$, and $r_{0} = r_{1}$ 
by Lemma~\ref{lem:3-t-link}. 
Hence $L_{0}$ and $L_{1}$ are isotopic. 
\end{proof}

\begin{proof}[Proof of Theorem~\ref{thm:3-isotopy}]
In the same manner as Theorem~\ref{thm:1-isotopy}, 
we may assume that $L_{0}$ and $L_{1}$ are non-split links. 
The JSJ decomposition of 
$\widetilde{X} \cut \widetilde{L_{0}} \simeq \widetilde{X} \cut \widetilde{L_{1}}$ 
is lifts of those of $X \cut L_{0}$ and $X \cut L_{1}$ 
by Lemma~\ref{lem:jsj-cover}. 
Suppose that $L_{0}$ is decomposed into $L_{0,1}, \dots, L_{0,m}$ along the layering JSJ tori, 
where $L_{0,j}$ are arranged in the order of coordinates for $I$. 
Then $\widetilde{L_{0}}$ is decomposed into their lifts $\widetilde{L_{0,1}}, \dots, \widetilde{L_{0,m}}$ 
along the layering JSJ tori. 
The same applies to $L_{1}$. 
Since the decomposition along the layering JSJ tori is unique up to isotopy, 
$\widetilde{L_{0,j}}$ and $\widetilde{L_{1,j}}$ are isotopic for any $1 \leq j \leq m$ 
after possibly cyclically permuting the indices. 
Then $L_{0}$ and $L_{1}$ are isotopic by Theorem~\ref{thm:2-isotopy}. 

Suppose that $L_{0}$ and $L_{1}$ are not layered by JSJ tori. 
Theorems~\ref{thm:hyp-isotopy} and \ref{thm:1-isotopy} and Lemmas~\ref{lem:3-outermost} and \ref{lem:3-seifert-isotopy} implies that $L_{0}$ and $L_{1}$ are isotopic 
in the same manner as Theorem~\ref{thm:2-isotopy}. 
\end{proof}

We finally remark that cover-isotopy does not imply isotopy for links in the twisted $I$-bundle over the Klein bottle $S^{1} \tilde{\times} S^{1} \tilde{\times} I$. 
Three examples are shown in Figure~\ref{fig:kb_cover}. 
In each example, two non-isotopic links in $S^{1} \tilde{\times} S^{1} \tilde{\times} I$ have isotopic lifts in the double cover $T^{2} \times I$. 
In each of the first and third examples, 
the two knots in $S^{1} \tilde{\times} S^{1} \tilde{\times} I$ are distinguished by their homology classes in $H_{1}(S^{1} \tilde{\times} S^{1} \tilde{\times} I, \bbZ / 2\bbZ) \simeq \bbZ / 2\bbZ \oplus \bbZ / 2\bbZ$. 
In the second example, 
the two links in $S^{1} \tilde{\times} S^{1} \tilde{\times} I$ are distinguished by the number of components. 
In the third example, the knots are hyperbolic. 
Note that the assertion of Lemma~\ref{lem:isotopy-lift} does not hold for $X = S^{1} \tilde{\times} S^{1} \tilde{\times} I$.

\begin{figure}[ht]
\centerline{\includegraphics[width=5in]{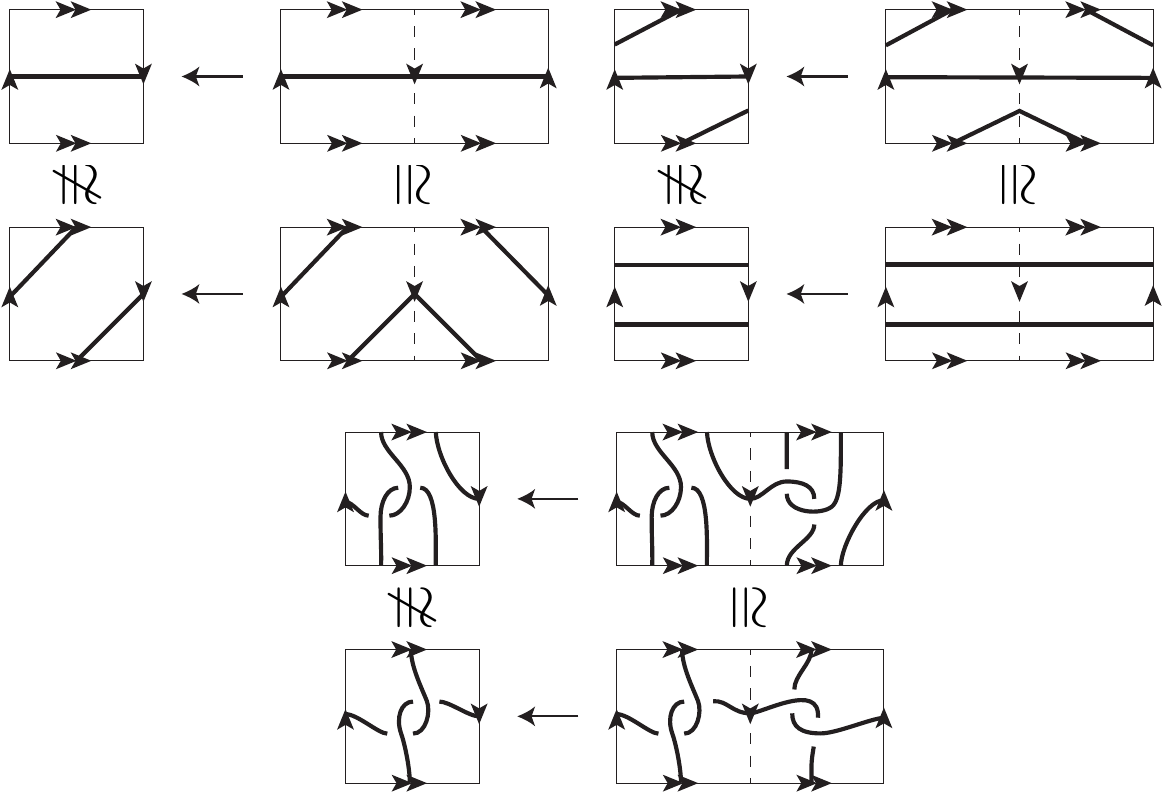}}
\vspace*{8pt}
\caption{\label{fig:kb_cover} 
Non-isotopic links in $S^{1} \tilde{\times} S^{1} \tilde{\times} I$ with isotopic lifts.}
\end{figure}

\section{Minimal motifs of periodic tangles}
\label{section:minimal}

Suppose that $X$ is $S^{1} \times D^{2}$, $T^{2} \times I$, or $T^{3}$. 
Recall the following notation. 
For links $L_{0}, L_{1} \subset X$, 
we say that $P \colon (X, L_{0}) \to (X, L_{1})$ (or $P \colon L_{0} \to L_{1}$ more simply) is a finite covering map 
if $P \colon X \to X$ is a finite covering map in the usual sense and $L_{0} = P^{-1}(L_{1})$. 
The universal cover $L_{\infty}$ of a link $L$ in $X$ is called a \emph{(singly, doubly, or triply) periodic tangle}. 
Let $L_{0, \infty}$ and $L_{1, \infty}$ be the universal covers of links $L_{0}$ and $L_{1}$ in $X$, respectively. 
The periodic tangles $L_{0, \infty}$ and $L_{1, \infty}$ are \emph{equivalent} 
if there are finite covering maps $P_{0}, P_{1} \colon X \to X$ 
such that there is an admissible homeomorphism $(X, P_{0}^{-1}(L_{0})) \to (X, P_{1}^{-1}(L_{1}))$. 
Here a homeomorphism $h \colon X \to X$ is \emph{admissible} 
if $h$ preserves the orientation in any case, 
the longitude up to isotopy in the case $X =S^{1} \times D^{2}$, 
and the boundary components (i.e. the front and back) in the case $X =T^{2} \times I$. 
We consider the equivalence classes of periodic tangles. 
A \emph{motif} of a periodic tangle $L_{\infty}$ is a link in $X$ whose lift in the universal cover of $X$ is equivalent to $L_{\infty}$. 
A motif $L$ of a periodic tangle $L_{\infty}$ is \emph{minimal} if there are no smaller motifs. 
More precisely, $L$ is minimal if the existence of a finite covering map $L \to L'$ implies the existence of an admissible homeomorphism $(X, L) \to (X, L')$. 
We show the uniqueness of a minimal motif of a periodic tangle with some conditions. 
In the case $X =T^{2} \times I$, this is equivalent to the existence of a maximally periodic diagram of a doubly periodic tangle, 
which is obtained from a diagram on $T^{2}$ of a minimal motif.

\begin{thm}
\label{thm:minimal-motif}
Let $L_{\infty}$ be a non-split periodic tangle with a motif $L \subset X = S^{1} \times D^{2}$, $T^{2} \times I$, or $T^3$. 
Suppose that each Seifert fibered JSJ piece $M$ of the complement of $L$ such that the homomorphism $\iota_{*} \colon H_{1}(M, \bbZ) \to H_{1}(X, \bbZ)$ induced by the inclusion map is non-trivial can be re-embedded as the complement of $T_{0}(p,0)$, $T_{1}(0,q)$, $T_{2}(p,q)$, or $T_{3}(p,q,r)$. 
Then $L_{\infty}$ has a motif $L_{\min}$ such that for any motif $L'$ of $L_{\infty}$, there is a finite covering map $L' \to L_{\min}$. 
In particular, $L_{\infty}$ has a minimal motif unique up to homeomorphism. 
\end{thm}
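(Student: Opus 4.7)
The plan is to construct $L_{\min}$ by iteratively forming common finite quotients of pairs of motifs, and then derive uniqueness via Theorem~\ref{thm:main1}. The main technical step is: for any two motifs $L_{0}, L_{1}$ of $L_{\infty}$, produce a motif $L^{*}$ together with finite covering maps $L_{0} \to L^{*}$ and $L_{1} \to L^{*}$. Iterating this construction yields $L_{\min}$, since the translational symmetry lattices of the successive motifs strictly enlarge within a fixed commensurability class, and such ascending chains are finite under the hypotheses.

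To produce the common quotient, apply the proposition in Preliminaries characterizing equivalence of periodic tangles: there exist finite covering maps $P_{i} \colon \widetilde{X} \to X$ and an admissible homeomorphism between $P_{0}^{-1}(L_{0})$ and $P_{1}^{-1}(L_{1})$, which I identify as a single link $\widetilde{L} \subset \widetilde{X}$. Since $\pi_{1}(X)$ is abelian for each of our three $X$'s, the deck groups $G_{i} = \mathrm{Deck}(P_{i})$ are finite abelian subgroups of the translation group of $\widetilde{X}$, each preserving $\widetilde{L}$. Their join $G = \langle G_{0}, G_{1} \rangle$ is again a finite abelian translation group preserving $\widetilde{L}$, so $L^{*} := \widetilde{L} / G \subset \widetilde{X} / G$ is a link in $\widetilde{X} / G$, which is admissibly homeomorphic to $X$ (the quotient of the torus factor of $\widetilde{X}$ by a full-rank discrete lattice is a torus of the required type).

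The essential verification is that $L^{*}$ is in fact a motif of $L_{\infty}$, i.e., that the lift of $L^{*}$ to $X_{\infty}$ is equivalent to $L_{\infty}$. This requires the combined translational action of $G$ on $(\widetilde{X}, \widetilde{L})$ to be well-defined piece-by-piece along the JSJ decomposition of $\widetilde{X} \cut \widetilde{L}$, which by Lemma~\ref{lem:jsj-cover} lifts that of $X \cut L_{i}$. On each hyperbolic JSJ piece, Mostow rigidity forces the $G_{i}$-actions to be isometric, and Lemma~\ref{lem:translation} applied to the induced actions on cusps shows that the translational contributions from $G_{0}$ and $G_{1}$ compose consistently. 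On each Seifert JSJ piece with non-trivial $H_{1}$-image, the hypothesis provides a re-embedding as the complement of $T_{0}(p, 0)$, $T_{1}(0, q)$, $T_{2}(p, q)$, or $T_{3}(p, q, r)$, in whose standard coordinates both $G_{i}$-actions are manifestly lattice translations, so their sum is a well-defined translation. Seifert pieces with trivial $H_{1}$-image are fixed pointwise by translations of $\widetilde{X}$, raising no obstruction. Matching the actions across the JSJ tori where adjacent pieces meet completes the verification.

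Uniqueness of $L_{\min}$ up to homeomorphism follows as follows. If $L_{\min}'$ is another motif with the same universal property, then there are finite covering maps $f \colon L_{\min} \to L_{\min}'$ and $g \colon L_{\min}' \to L_{\min}$. Since $|P^{-1}(L)| \geq |L|$ for any finite cover $P$, one obtains $|L_{\min}| = |L_{\min}'|$, after which the analysis of self-covers of $L_{\min}$ (constrained by Theorem~\ref{thm:main1} together with the JSJ analysis that pins down the symmetry of each piece) forces $f$ and $g$ to be homeomorphisms. The main obstacle throughout is the piecewise verification in the third paragraph: ensuring that the $G_{0}$- and $G_{1}$-translations agree on JSJ tori so that their sum gives a well-defined translation across the boundaries between pieces. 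The hypothesis on standardly-embedded Seifert pieces is tailored precisely to make this matching explicit in coordinates, so that the abelian group structure on $G_{0} + G_{1}$ is realized geometrically by the ambient translations of $\widetilde{X}$.
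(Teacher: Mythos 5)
Your overall strategy is the same as the paper's: build a common finite quotient of any two motifs by combining the two deck groups piece-by-piece along the JSJ decomposition (Mostow rigidity on hyperbolic pieces, standard coordinates on the re-embedded Seifert pieces), then extract a least element. However, there are concrete gaps. First, your termination claim --- that ``the translational symmetry lattices of the successive motifs strictly enlarge \ldots and such ascending chains are finite under the hypotheses'' --- is unjustified and in fact false in general: $(T^{2} \times I, T_{2}(p,q))$ admits self-covering maps of every degree, so a purely Seifert-fibered motif sits at the top of an infinite descending chain. The paper must treat that case separately (the minimal motif is then $T_{2}(1,0)$, etc.) and, for all other motifs, prove an explicit upper bound on the degree of any covering map out of a fixed $L$ (its Lemma~\ref{lem:self-cover}), using the Cao--Meyerhoff volume bound on hyperbolic pieces together with a case analysis of split summands, knotted hole balls, and Seifert pieces. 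Without some such bound your iteration has no reason to stop. Second, your uniqueness argument is insufficient: equality of component numbers $|L_{\min}| = |L_{\min}'|$ does not force a finite covering map to be a homeomorphism (a connected knot can nontrivially cover a connected knot). What is actually needed is that $L_{\min}$ admits no self-covering of degree at least two, which again is exactly the content of Lemma~\ref{lem:self-cover}.

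Two further points in your common-quotient construction are asserted where they need proof. You declare that $G = \langle G_{0}, G_{1} \rangle$ is ``a finite abelian translation group preserving $\widetilde{L}$,'' but the deck transformations are translations of $\widetilde{X}$ only up to isotopy rel $\widetilde{L}$; inside $\mathrm{Homeo}(\widetilde{X}, \widetilde{L})$ the group generated by two finite subgroups need be neither finite nor free. Finiteness comes from embedding $G$ into the finite isometry group of the hyperbolic pieces, and freeness requires an argument --- the paper proves fixed-point-freeness of the relevant isometries (its Lemmas~\ref{lem:1-hyp-free} and \ref{lem:2-hyp-free}) via the translation action on cusps, and invokes Meeks--Scott for $T^{3}$. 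Finally, ``matching the actions across the JSJ tori'' hides a real combinatorial step: on a Seifert piece the quotient by $G$ identifies link components, and one must permute the components of $\widetilde{L}$ so that the identification classes forced by $G_{0}$ and $G_{1}$ on adjacent pieces are respected (Assertion~(5) of the paper's Lemma~\ref{lem:seifert-motif}). Your proposal names the right ingredients but leaves precisely these load-bearing steps unproved.
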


We need to consider admissible homeomorphisms for the equivalence of singly periodic tangles. 
For instance, Figure~\ref{fig6} shows two distinct links $L_{0}$ and $L_{1}$ in $S^{1} \times D^{2}$ such that there is a non-admissible homeomorphism between their double covers. 
However, $L_{0}$ and $L_{1}$ are not non-trivial covers of any other link in $S^{1} \times D^{2}$, 
since each of them has exactly one null-homotopic component.

We need the condition for splitness and JSJ pieces to avoid the following counterexamples. 
However, some periodic tangles that do not satisfy the assumption of Theorem~\ref{thm:minimal-motif} may accidentally have unique minimal motifs. 
A necessary and sufficient condition for having a unique minimal motif would be too complicated. 

There is a split doubly periodic tangle whose minimal motifs are not unique. 
For instance, the links $L_{0}$ and $L_{1}$ shown in Figure~\ref{fig:split_motif} are motifs of a common doubly periodic tangle. 
Note that a split summand (a trefoil in the figure) can be freely moved. 
However, $L_{0}$ and $L_{1}$ have no smaller motifs due to the split summand. 
The links $L_{0}$ and $L_{1}$ are distinguished by the number of components. 
We remark that splitness does not matter for singly periodic tangles. 

\begin{figure}[ht]
\centerline{\includegraphics[width=5in]{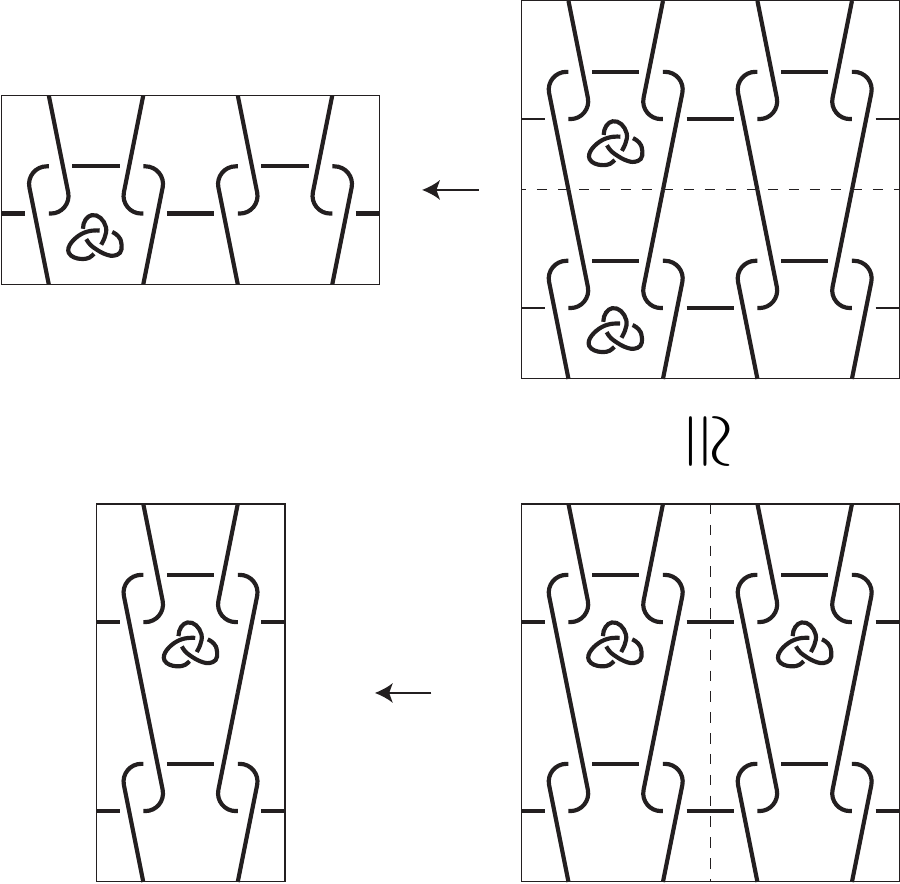}}
\vspace*{8pt}
\caption{\label{fig:split_motif} 
A split doubly periodic tangle with non-unique minimal motifs}
\end{figure}

Moreover, the links $T_{0}(3,1)$ and $T_{1}(2,1)$ in $S^{1} \times D^{2}$ are distinguished by the number of components and minimal by the classification by Lemma~\ref{lem:1-t-link} and Proposition~\ref{prop:1-seifert}. 
Since the triple cover $T_{0}(3,3)$ of $T_{0}(3,1)$ is isotopic to the double cover $T_{1}(2,2)$ of $T_{1}(2,1)$,  
the links $T_{0}(3,1)$ and $T_{1}(2,1)$ are motifs of a common singly periodic tangle. 
This corresponds to the relation $(\sigma_{1} \sigma_{2})^{3} = (\sigma_{1} \sigma_{2} \sigma_{1})^{2}$ in the braid group $B_{3}$. 
More generally, the $p$-sheeted cover $T_{0}(p,p)$ of $T_{0}(p,1)$ is isotopic to the $p-1$-sheeted cover $T_{1}(p-1,p-1)$ of $T_{1}(p-1,1)$ for $p \geq 2$. 
There does not exist such coincidence for $T_{0}(p,0)$ and $T_{1}(0,q)$. 
If a periodic tangle $L_{\infty}$ has a motif that is a connected sum of a hyperbolic link in $X$ 
and hyperbolic knots in $S^{3}$, then each Seifert fibered JSJ piece is $T_{1}(0,q)$, and so $L_{\infty}$ has a unique minimal motif by Theorem~\ref{thm:minimal-motif}. 

For links $L_{0}$ and $L_{1}$ in $X$, 
define $L_{0} \succeq L_{1}$ if there is a finite covering map $L_{0} \to L_{1}$. 
The relation $\succeq$ is a preorder on the set of links in $X$. 
Theorem~\ref{thm:minimal-motif} states that the preordered set of motifs of each periodic tangle has a least element. 
In fact, the preorder $\succeq$ induces a partial order on the set of admissible homeomorphism classes of $(X, L)$ by Lemma~\ref{lem:self-cover}. 
Similarly to Section~\ref{section:isotopy}, we use the JSJ decomposition of the complement. 
Note that there is a finite covering map $(T^{2} \times I, T_{2}(p,q)) \to (T^{2} \times I, T_{2}(p,q))$ 
of degree $n$ for any positive integer $n$.

\begin{lem}
\label{lem:self-cover}
Let $L$ be a link in $X$ whose complement is not Seifert fibered. 
Then there is an upper bound of the degrees of finite covering maps $L \to L'$ which depends only on $L$. 
In particular, there is no finite covering map $L \to L$ of degree at least two. 
\end{lem}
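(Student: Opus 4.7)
The plan is to attach to $L$ a positive numerical invariant $c(L)$, depending only on $L$, such that every finite covering map $P \colon (X, L) \to (X, L')$ of degree $n$ satisfies $n \leq c(L)$. Since $\pi_{1}(X)$ is abelian, $P$ is a regular cover with deck group $G$ of order $n$, and by Lemma~\ref{lem:jsj-cover} (the exceptional cases do not arise for link complements in our $X$), the JSJ decomposition of $X \cut L$ is the $P$-preimage of that of $X \cut L'$. The argument splits according to whether $X \cut L$ contains a hyperbolic JSJ piece.

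Suppose first that $X \cut L$ contains at least one hyperbolic JSJ piece. Under the cover, preimages of hyperbolic JSJ pieces remain hyperbolic and volumes multiply by the degree, so
\[
\vol_{\mathrm{hyp}}(X \cut L) \;=\; n \cdot \vol_{\mathrm{hyp}}(X \cut L').
\]
Because the image $L'$ also has a hyperbolic JSJ piece (the image of one in $X \cut L$), and every orientable cusped hyperbolic 3-manifold has volume bounded below by a universal constant $v_{0} > 0$, we get $n \leq \vol_{\mathrm{hyp}}(X \cut L)/v_{0}$, a bound depending only on $L$.

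Now suppose $X \cut L$ is a graph manifold; since it is not Seifert fibered, its JSJ decomposition has at least two Seifert pieces and at least one internal JSJ torus. Here the volume invariant vanishes, so I would bound $|G|$ through its action on the finite JSJ graph $\Gamma(L)$. Let $K \trianglelefteq G$ be the kernel of the induced action $G \to \Aut(\Gamma(L))$, so that $|G| \leq |K| \cdot |\Aut(\Gamma(L))|$ and the second factor depends only on $L$. Each element of $K$ acts freely on every Seifert JSJ piece $M$ preserving its Seifert fibration, which is unique up to isotopy by Proposition~\ref{prop:unique-seifert} since no JSJ piece appearing in our $X$ is the twisted $I$-bundle over the Klein bottle. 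By the classifications in Propositions~\ref{prop:1-jsj}, \ref{prop:2-jsj}, and \ref{prop:3-jsj}, every Seifert JSJ piece except possibly the thickened torus $M(0,2;)$ (which can arise only for $X = S^{1} \times D^{2}$) has base orbifold of negative Euler characteristic, hence finite orbifold automorphism group. The kernel of the base action on $M$ consists of fiber rotations; extending such a rotation across a shared JSJ torus to an adjacent piece $N$ forces the resulting translation of the torus in the $f_{M}$-direction to be realized by a nontrivial base-orbifold symmetry of $N$ (since $f_{N} \neq f_{M}$ by JSJ minimality), which lies in the finite automorphism group of the base of $N$. Hence $|K|$ is bounded in terms of $L$.

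The main obstacle is the graph-manifold case, where the clean volume-multiplicativity of Case~1 is unavailable: one must extract the bound from the JSJ combinatorics together with an orbifold-level rigidity analysis of the Seifert pieces, and the most delicate point is the thickened-torus piece $M(0,2;)$, whose own base orbifold has zero Euler characteristic and thus requires the bound on fiber rotations to be transferred to the symmetries of the neighboring pieces via the compatibility on shared JSJ tori.
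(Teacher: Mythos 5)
Your hyperbolic case is exactly the paper's first step (the paper uses $\vol(L)=$ sum of hyperbolic JSJ volumes and the Cao--Meyerhoff bound $2v_{\mathrm{tet}}$), but your treatment of the remaining cases has two genuine gaps. First, you never handle split links. The hypothesis is only that $X \cut L$ is not Seifert fibered, so $L$ may be split, in which case $X \cut L$ is reducible: it is neither a graph manifold nor covered by Lemma~\ref{lem:jsj-cover} (which assumes irreducibility), so your dichotomy ``contains a hyperbolic JSJ piece / is a graph manifold'' does not exhaust the possibilities. The paper disposes of this case separately via the prime decomposition (Lemma~\ref{lem:prime}): the split 3-balls lift to $n$ disjoint copies, so $n$ is bounded by the number of split summands of $L$. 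You need this step, or an explicit reduction to the non-split case, before the JSJ machinery applies.

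Second, in the graph-manifold case your key finiteness claim is not correct as stated: a $2$-orbifold of negative Euler characteristic does \emph{not} have a finite automorphism group (its mapping class group contains Dehn twists and is infinite). What is true, and what you actually need, is that a \emph{finite} subgroup of its homeomorphism group has order bounded in terms of $\chi^{\mathrm{orb}}$ (Nielsen realization plus the Hurwitz-type bound for hyperbolic $2$-orbifolds); you should say this, since the image of $K$ in $\mathrm{Homeo}(B_M)$ is only known to be finite, not to be a group of isometries. Relatedly, the transfer of fiber rotations across a JSJ torus is only sketched: you must actually verify that an element acting as a pure fiber rotation on two adjacent pieces translates the shared torus in both the $f_M$- and $f_N$-directions and is therefore trivial, and that the induced boundary rotation on $B_N$ lies in a finite group of bounded order. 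These points are repairable, but as written the argument does not close. For comparison, the paper avoids this abstract orbifold analysis entirely: it classifies the JSJ tori into three types (bounding knotted hole balls, bounding solid tori, layering) and uses the explicit Seifert-fibered link classifications ($T_{l}(p,q)$, $T_{2}(p,q)$) to extract concrete degree bounds such as $\gcd(p,q)\,q'$ and $|p_{0}q_{1}-p_{1}q_{0}|$ in each configuration. Your route is more general in spirit, but it trades the paper's elementary computations for an orbifold-rigidity argument whose delicate steps you have not supplied.
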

\begin{proof}
We define $\vol(L)$ to be the sum of the volumes of hyperbolic JSJ pieces of the complement of $L$. 
If the degree of a finite covering map $L \to L'$ is $n$, then $\vol(L) = n \vol(L')$. 
Moreover, the set of the volumes of hyperbolic 3-manifolds is well-ordered \cite{Gromov81}. 
In particular, there is a hyperbolic 3-manifold of smallest volume. 
In fact, Cao and Meyerhoff \cite{CM01} showed that 
the smallest volume of orientable cusped hyperbolic 3-manifolds is $2v_{\mathrm{tet}} = 2.02...$, 
where $v_{\mathrm{tet}}$ is the volume of a regular ideal tetrahedron. 
This volume is realized by the figure-eight knot complement. 
Consequently, if $\vol(L) \neq 0$, 
the degree of a finite covering map $L \to L'$ is at most $\vol(L) / 2v_{\mathrm{tet}}$. 
Hence we may assume that $\vol(L) = 0$. 

Suppose that $L$ maximally splits to links $L_{0}$, $L_{1}$, \dots, $L_{m}$, 
where $L_{0}$ is a (possibly empty) non-split link, 
and $L_{1}$, \dots, $L_{m}$ are contained in disjoint 3-balls. 
If $m \geq 1$, then the degree of a finite covering map from $L$ is at most $m$. 
Hence we may assume that $m=0$, i.e. $L$ is non-split. 

In the remaining case, the complement $X \cut L$ is irreducible, 
$X \cut L$ admits non-empty JSJ tori, 
and all the JSJ pieces are Seifert fibered. 
Consider the three types of JSJ tori in Lemma~\ref{lem:2-torus}. 
Each type is preserved by a finite covering map. 

Suppose that a JSJ torus bounds a knotted hole ball. 
Since a knotted hole ball is lifted to disjoint copies by a covering map, 
the degree of a finite covering map from $L$ 
is at most the number of JSJ tori bounding knotted hole balls. 

Suppose that every JSJ torus bounds a solid torus. 
Lemma~\ref{lem:jsj-cover} implies that 
a finite covering map from $L$ can be restricted to 
a finite covering map from the union of the outermost and adjacent JSJ pieces of $X \cut L$. 
Hence we may assume that every JSJ torus is a boundary component of the outermost JSJ piece. 
The outermost JSJ piece is the complement of a link $T_{2}(p,q)$. 
Consider an adjacent JSJ piece, 
which is the complement of a link $T_{l}(p', q')$ in the solid torus. 
The minimality of JSJ tori implies that $q' \neq 0$. 
The proof of Lemma~\ref{lem:1-seifert-isotopy} implies that 
the degree of a finite covering map from $(S^{1} \times D^{2}, T_{l}(p', q'))$ 
preserving the longitude 
is at most $q^{\prime}$. 
Hence the degree of a finite covering map from $L$ 
is at most $\gcd(p,q) q'$. 

Suppose that there is a layering JSJ torus of $X \cut L$. 
The degree of a finite covering map from $L$ 
is at most the degree of a finite covering map from $L_{0}$, 
where $L_{0}$ is obtained by the layering of $L$. 
Hence we may assume that $L$ is decomposed in layers into links $T_{2}(p_{0}, q_{0})$ and $T_{2}(p_{1}, q_{1})$. 
Their components are not parallel. 
The proof of Lemma~\ref{lem:2-seifert-isotopy} implies that 
the degree of a finite covering map from $L$ 
is at most $| p_{0}q_{1} - p_{1}q_{0} |$, 
which is the crossing number of $L$ by the standard projection. 

If there is a finite covering map $L \to L$ of degree at least two, 
we obtain finite covering maps $L \to L$ of arbitrarily large degree by compositing it repeatedly, 
which contradicts the assertion. 
\end{proof}

The key lemma is the following. 

\begin{lem}
\label{lem:small-motif}
Let $L_{\infty}$ be a non-split periodic tangle with a motif $L \subset X = S^{1} \times D^{2}$, $T^{2} \times I$, or $T^3$. 
Suppose that each Seifert fibered JSJ piece $M$ of the complement of $L$ such that the homomorphism $\iota_{*} \colon H_{1}(M, \bbZ) \to H_{1}(X, \bbZ)$ induced by the inclusion map is non-trivial can be re-embedded as $T_{0}(p,0)$, $T_{1}(0,q)$, $T_{2}(p,q)$, or $T_{3}(p,q,r)$. 
Let $L_{0}$ and $L_{1}$ be motifs of $L_{\infty}$. 
Then there is a motif $\widehat{L}$ of $L_{\infty}$ 
such that there are finite covering maps $L_{0} \to \widehat{L}$ and $L_{1} \to \widehat{L}$. 
\end{lem}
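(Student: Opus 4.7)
The overall strategy is to work in the universal cover $X_\infty$. After composing with appropriate affine transformations and periodic isotopies, fix a single copy of $L_\infty$ in $X_\infty$; then each motif $L_i$ for $i=0,1$ identifies as $L_\infty / \Lambda_i$ for a full-rank discrete lattice $\Lambda_i$ of translations preserving $L_\infty$, together with an affine identification of $X_\infty/\Lambda_i$ with $X$. The candidate common sub-motif is $\widehat{L} = L_\infty / \widehat{\Lambda}$, where $\widehat{\Lambda}$ is the subgroup of translations generated by $\Lambda_0$ and $\Lambda_1$; the inclusions $\Lambda_i \subseteq \widehat{\Lambda}$ would then induce the required finite covering maps $L_i \to \widehat{L}$.

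The main technical input is commensurability of $\Lambda_0$ and $\Lambda_1$. The proposition characterizing equivalent periodic tangles via finite covers supplies covering maps $P_0, P_1 \colon X \to X$ and an admissible homeomorphism $h \colon (X, P_0^{-1}(L_0)) \to (X, P_1^{-1}(L_1))$. Applying Theorem~\ref{thm:main1} to $h(P_0^{-1}(L_0))$ and $P_1^{-1}(L_1)$ in the common cover upgrades $h$ to an isotopy, producing a sublattice $\widetilde{\Lambda} \subseteq \Lambda_0 \cap \Lambda_1$ of finite index in both. Once commensurability is established, $\widehat{\Lambda}$ is automatically discrete of full rank, and because the translation group is abelian and both $\Lambda_0, \Lambda_1$ preserve $L_\infty$, so does $\widehat{\Lambda}$. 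Hence $\widehat{L}$ is a well-defined link in $X$ and a motif of $L_\infty$ dominating both $L_0$ and $L_1$.

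The main obstacle is the ``irrational'' subcase of the equivalence proposition, in which the closure of $\widehat{\Lambda}$ in the translation group contains a $1$-parameter subgroup $V$; then $L_\infty$ is $V$-invariant and locally a family of parallel straight fibers. Any JSJ piece $M$ of $X \cut L$ that detects such a direction is then Seifert fibered with fiber representing a non-trivial class in $H_1(X,\bbZ)$, and so by the standing hypothesis re-embeds as the complement of $T_0(p,0)$, $T_1(0,q)$, $T_2(p,q)$, or $T_3(p,q,r)$. For these explicit link types, the admissible covers are enumerated by Lemmas~\ref{lem:1-t-link}, \ref{lem:2-t-link}, and \ref{lem:3-t-link}, so a common sub-motif on $M$ can be written down directly by enlarging $\Lambda_0$ and $\Lambda_1$ along the fiber direction, with the new component count dictated by the relevant $\gcd$ of the slope data.

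To assemble the global $\widehat{L}$, I would proceed by induction on the JSJ complexity, parallel to the proofs of Theorems~\ref{thm:1-isotopy}--\ref{thm:3-isotopy}. On each hyperbolic JSJ piece, the Mostow rigidity argument of Theorem~\ref{thm:hyp-isotopy} already pins down a canonical common sub-lattice; Seifert fibered pieces with trivial $\iota_*$ sit inside compact subregions of $X_\infty$ and descend without ambiguity; periodic Seifert pieces are handled by the re-embedding step just described; and the solid tori and knotted hole balls produced by Lemmas~\ref{lem:1-torus}--\ref{lem:3-torus} glue in compatibly along JSJ tori as in Lemmas~\ref{lem:1-outermost}--\ref{lem:3-outermost}. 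Compatibility at the JSJ tori reduces to matching the induced actions on $H_1$ of the boundary, which is controlled by the commensurability already established on the neighboring pieces.
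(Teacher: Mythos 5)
There is a genuine gap, and it sits in your very first paragraph. You assert that, after suitable isotopies, both lattices $\Lambda_{0}$ and $\Lambda_{1}$ act by \emph{literal} translations preserving a single embedded copy of $L_{\infty}$, so that the group $\widehat{\Lambda}$ they generate ``automatically'' preserves $L_{\infty}$ because translations commute. But each $\Lambda_{i}$ only preserves its own isotopic copy of the tangle; equivalently, on the common finite cover $\widetilde{L}$ the two deck transformation groups $G_{0}$ and $G_{1}$ act by homeomorphisms of the pair $(\widetilde{X},\widetilde{L})$ that are merely \emph{isotopic} to translations of $\widetilde{X}$, not equal to them. The subgroup of $\Aut(\widetilde{X},\widetilde{L})$ generated by $G_{0}\cup G_{1}$ has no a priori reason to be finite or to act freely, so the quotient $\widehat{L}$ is not yet defined. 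Simultaneously straightening both groups to genuine geometric actions on one and the same link is exactly the content of the paper's Lemmas~\ref{lem:hyp-motif} and \ref{lem:seifert-motif}: on hyperbolic JSJ pieces one uses Mostow rigidity to replace the deck transformations by isometries (whence the generated group lies in the finite group $\Isom(M)$, and freeness follows from Lemmas~\ref{lem:1-hyp-free} and \ref{lem:2-hyp-free}, with Meeks--Scott needed for $T^{3}$); on the Seifert pieces one uses the explicit normal forms $T_{0}(p,0)$, $T_{1}(0,q)$, $T_{2}(p,q)$, $T_{3}(p,q,r)$. Your last paragraph names these ingredients, but treats them as routine gluing bookkeeping when they are in fact the proof; your opening construction begs the question they are designed to answer.

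Two secondary points. First, you locate the ``main obstacle'' in the irrational-lattice subcase, but that case belongs to the earlier proposition characterizing equivalence of periodic tangles; for this lemma the existence of a common finite cover $\widetilde{L}$ already forces $\Lambda_{0}$ and $\Lambda_{1}$ to be commensurable, so discreteness of $\widehat{\Lambda}$ is never in doubt. Second, even granting the piecewise straightening, the actions on adjacent JSJ pieces must agree on the separating tori, and for Seifert pieces this requires controlling how the deck groups permute link components (Assertion (5) of Lemma~\ref{lem:seifert-motif}); matching induced actions on $H_{1}$ of the boundary, as you propose, is not sufficient for this.
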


\begin{proof}[Proof of Theorem~\ref{thm:minimal-motif}]
If the link $T_{0}(p,0)$, $T_{1}(0,q)$, $T_{2}(p,q)$, or $T_{3}(p,q,r)$ is a motif of $L_{\infty}$, 
then $T_{0}(1,0)$, $T_{1}(0,1)$, $T_{2}(1,0)$, or $T_{3}(1,0,0)$ is a unique minimal motif of $L_{\infty}$. 
Otherwise there is a minimal motif $L_{\min}$ of $L_{\infty}$ by Lemma~\ref{lem:self-cover}. 
For any motif $L'$ of $L_{\infty}$, 
there is a motif $\widehat{L}$ such that there are finite covering maps $L_{\min} \to \widehat{L}$ and $L' \to \widehat{L}$ 
by Lemma~\ref{lem:small-motif}. 
Since $L_{\min}$ is minimal, there is a homeomorphism $\widehat{L} \to L_{\min}$. 
Hence there is a finite covering map $L' \to L_{\min}$. 
If $L'$ is also a minimal motif of $L_{\infty}$, 
then there are finite covering maps $P \colon L_{\min} \to L'$ and $Q \colon L' \to L_{\min}$. 
The covering map $Q \circ P \colon L_{\min} \to L_{\min}$ 
is a homeomorphism by Lemma~\ref{lem:self-cover}. 
Hence $P$ is also a homeomorphism. 
Therefore a minimal motif of $L_{\infty}$ is unique up to homeomorphism. 
\end{proof}

To prove Lemma~\ref{lem:small-motif}, we first consider the hyperbolic links. 

\begin{lem}
\label{lem:2-hyp-free}
Let $M$ be the complement of a hyperbolic link $L$ in $T^{2} \times I$. 
Suppose that $g \in \Isom(M)$ is extended to an admissible homeomorphism of $T^{2} \times I$. 
Moreover, suppose that the isometry $g$ acts on a component of $T^{2} \times \partial I$ 
by a non-trivial translation. 
Then $g$ has no fixed points in $M$. 
\end{lem}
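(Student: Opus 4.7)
The plan is to combine the finiteness of $\Isom(M)$ (from Mostow rigidity) with a lift-and-iterate calculation on the universal cover $\bbR^{2} \times I$ of $T^{2} \times I$. Let $n$ be the order of $g$ in the finite group $\Isom(M)$. Since $g^{n} = \mathrm{id}$ on the dense subset $M \subset T^{2} \times I$ and the extension $\bar g$ is continuous, one concludes $\bar g^{n} = \mathrm{id}$ on all of $T^{2} \times I$.

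Next, because $\bar g$ restricts to a translation on $T^{2} \times \{0\}$, the induced map $\bar g_{*}$ on $\pi_{1}(T^{2} \times I) \cong \bbZ^{2}$ is the identity, so any lift $\tilde{\bar g}$ to $\bbR^{2} \times I$ commutes with every deck transformation $T_{w}$, $w \in \bbZ^{2}$. I would fix the lift whose restriction to $\bbR^{2} \times \{0\}$ is translation by some $\tilde v \in \bbR^{2}$ projecting to $v$; since $v \neq 0$, we have $\tilde v \notin \bbZ^{2}$.

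The heart of the argument is then the identity $\tilde{\bar g}^{n} = T_{n\tilde v}$, obtained by observing that $\tilde{\bar g}^{n}$ is a lift of $\mathrm{id}$, hence a deck transformation, and its restriction to $\bbR^{2} \times \{0\}$ is the $n$-fold translation by $\tilde v$. More generally, for any lift $T_{w}\tilde{\bar g}$, commutativity yields $(T_{w}\tilde{\bar g})^{n} = T_{n(w + \tilde v)}$; since $w + \tilde v \notin \bbZ^{2}$, this is a non-trivial translation of $\bbR^{2} \times I$, hence fixed-point-free. Therefore no lift of $\bar g$ has a fixed point, $\bar g$ itself has no fixed points in $T^{2} \times I$, and in particular $g$ has none in $M$. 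The only subtle point is the passage from $g^{n} = \mathrm{id}$ on $M$ to $\bar g^{n} = \mathrm{id}$ globally on $T^{2} \times I$, which rests on continuity together with the density of $M$; once that is secured, everything reduces to elementary lattice arithmetic on the translation vector.
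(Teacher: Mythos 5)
Your strategy is genuinely different from the paper's: the paper lifts $g$ to the $\bbZ^{2}$-cover $\widetilde{M}\subset\bbR^{2}\times I$ with its lifted hyperbolic metric, assumes a fixed point, and derives a contradiction because the iterates $\tilde g^{k}(y)$ of a point $y$ in a horocusp of $\bbR^{2}\times\partial I$ escape to infinity while an isometry fixing $x$ must keep $d(x,\tilde g^{k}(y))$ constant. Your argument is purely topological (finiteness of $\Isom(M)$ plus lattice arithmetic on translation vectors), which is appealing, but it has a genuine gap at exactly the step you flag as the only subtle one. The complement $M=(T^{2}\times I)\cut L$ is obtained by deleting an \emph{open tubular neighborhood} $N(L)$, so $M$ is a compact submanifold with boundary and is \emph{not} dense in $T^{2}\times I$; the continuity-plus-density argument for $\bar g^{\,n}=\mathrm{id}$ on all of $T^{2}\times I$ therefore does not apply. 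Moreover the conclusion itself can fail for an arbitrary extension: $\bar g^{\,n}$ is the identity on $M$, hence on each boundary torus of $N(L)$, but a self-homeomorphism of a solid torus that is the identity on its boundary need not be the identity (it can be a twist or rotation supported in $N(L)$), so $\bar g$ need not have finite order.

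The repair is easy and makes the proof cleaner: do not extend to $T^{2}\times I$ at all. Work with the restricted cover $\widetilde{M}=P^{-1}(M)\to M$ of the universal covering $\bbR^{2}\times I\to T^{2}\times I$ (this cover is connected since $\pi_{1}(M)\to\pi_{1}(T^{2}\times I)$ is surjective, and its deck group is $\bbZ^{2}$ acting by the translations $T_{w}$). Here $g^{n}=\mathrm{id}_{M}$ holds exactly, since $n$ is the order of $g$ in the finite group $\Isom(M)$. The hypothesis that $g$ extends to an admissible homeomorphism acting as a translation on a boundary component gives that $g_{*}$ induces the identity on the quotient $\bbZ^{2}$, so $g$ lifts to $\widetilde{M}$ and every lift commutes with the $T_{w}$; the restriction of a lift to the plane $\bbR^{2}\times\{0\}\subset\widetilde{M}$ is a translation by some $\tilde v$ with $\tilde v\notin\bbZ^{2}$. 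Your identity $(T_{w}\tilde g)^{n}=T_{n(w+\tilde v)}$ then goes through verbatim, and since $w+\tilde v\neq 0$ and deck transformations of a connected cover act freely, every lift of $g$ has fixed-point-free $n$-th power, hence no fixed point; any fixed point of $g$ in $M$ would lift to a fixed point of some lift, so $g$ has none. With this correction your proof is complete and arguably more elementary than the paper's, using only the finiteness of $\Isom(M)$ rather than the geometry of horocusps.
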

\begin{proof}
Let $P \colon \widetilde{M} \to M$ denote the covering map 
that is the restriction of the universal covering map $\bbR^{2} \times I \to T^{2} \times I$. 
The isometry $g \in \Isom(M)$ lifts an isometry $\tilde{g} \in \Isom(\widetilde{M})$ 
satisfying that $P \circ \tilde{g} = g \circ P$. 
Then $\tilde{g}$ acts on a component of $\bbR^{2} \times \partial I$ by a non-trivial translation. 
Assume that $g$ has a fixed point in $M$. 
Then we can take $\tilde{g}$ that has also a fixed point $x$ in $\widetilde{M}$. 
Take a point $y$ in a horocusp neighborhood of a component of $\bbR^{2} \times \partial I$. 
Then the distances between $x = \tilde{g}^{n}(x)$ and $\tilde{g}^{n}(y)$ 
diverge to infinity when $n \to \infty$. 
This contradicts the fact that $\tilde{g}$ is an isometry. 
Hence $g$ has no fixed points in $M$. 
\end{proof}

The following lemma for $X = S^{1}\times D^{2}$ is proven in the same manner. 

\begin{lem}
\label{lem:1-hyp-free}
Let $M$ be the complement of a hyperbolic link $L$ in $S^{1} \times D^{2}$. 
Suppose that $g \in \Isom(M)$ is extended to an admissible homeomorphism of $S^{1} \times D^{2}$. 
Moreover, suppose that the isometry $g$ acts on $S^{1} \times \partial D^{2}$ 
by a non-trivial translation which does not fix a meridian $\{ \ast \} \times \partial D^{2}$ setwise. 
Then $g$ has no fixed points in $M$. 
\end{lem}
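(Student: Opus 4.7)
The plan is to imitate the proof of Lemma~\ref{lem:2-hyp-free} line for line, replacing the universal cover $\bbR^{2} \times I \to T^{2} \times I$ of the thickened torus with the universal cover $\bbR \times D^{2} \to S^{1} \times D^{2}$ of the solid torus, and using the meridian hypothesis to guarantee that the lifted isometry has a non-trivial translation in the longitudinal direction.

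First I would let $P \colon \widetilde{M} \to M$ denote the covering map obtained as the restriction of the universal covering map $\bbR \times D^{2} \to S^{1} \times D^{2}$. Using that $g$ extends to an admissible (hence longitude-preserving up to isotopy, orientation-preserving) self-homeomorphism of $S^{1} \times D^{2}$, one verifies that $g_{*}$ preserves $P_{*}(\pi_{1}(\widetilde{M}))$, so $g$ lifts to an isometry $\tilde{g} \in \Isom(\widetilde{M})$ with $P \circ \tilde{g} = g \circ P$.

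Next I would analyze the action of $\tilde{g}$ on a boundary component of $\bbR \times D^{2}$, which is an infinite cylinder $\bbR \times \partial D^{2}$ covering the torus $S^{1} \times \partial D^{2}$. By hypothesis, $g$ acts on this torus by a non-trivial translation that does not preserve any meridian $\{\ast\} \times \partial D^{2}$ setwise; equivalently, the translation vector has a non-zero component in the longitudinal ($S^{1}$) direction. Hence the lift $\tilde{g}$ acts on the cylinder as an isometry whose $\bbR$-component is a non-trivial translation.

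Then I would argue by contradiction: suppose $g$ has a fixed point in $M$. By composing with a suitable deck transformation we may choose the lift $\tilde{g}$ to have a fixed point $x \in \widetilde{M}$. Pick a point $y$ inside a horocusp neighborhood of the chosen boundary component. Since $x = \tilde{g}^{n}(x)$ is stationary while $\tilde{g}^{n}(y)$ drifts to infinity along the cylinder under iteration (due to the nonzero longitudinal translation), the distance $d(x, \tilde{g}^{n}(y))$ tends to $\infty$ as $n \to \infty$. This contradicts the isometry identity $d(x, \tilde{g}^{n}(y)) = d(\tilde{g}^{-n}(x), y) = d(x, y)$.

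The only real subtlety, and the point where the proof genuinely differs from Lemma~\ref{lem:2-hyp-free}, is the role of the meridian hypothesis. In the thickened torus case, admissibility forces the lift to be a non-trivial translation on each boundary component automatically; in the solid torus case, one has only a single cover-direction of $\bbZ$, so without the assumption that no meridian is fixed setwise the lift could act on the cylinder by a pure rotation of $\partial D^{2}$, which would be a finite-order isometry possibly having fixed points. Ruling out that possibility is precisely what the meridian hypothesis achieves.
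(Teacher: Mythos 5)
Your proof is correct and is exactly the argument the paper intends: the paper states that Lemma~\ref{lem:1-hyp-free} "is proven in the same manner" as Lemma~\ref{lem:2-hyp-free}, and your adaptation (lifting to the restriction of $\bbR \times D^{2} \to S^{1} \times D^{2}$ and using the escape-to-infinity contradiction) is that same proof. Your closing observation that the meridian hypothesis is precisely what guarantees the lifted isometry has a non-trivial $\bbR$-translation on the cylinder $\bbR \times \partial D^{2}$ correctly identifies the one point where the solid-torus case differs from the thickened-torus case.
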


We show that Lemma~\ref{lem:small-motif} holds for the hyperbolic links. 

\begin{lem}
\label{lem:hyp-motif}
Let $\widetilde{L}, L_{0}, L_{1}$ be hyperbolic links in $X$. 
Consider the hyperbolic 3-manifold $M = X \cut \widetilde{L}$. 
Suppose that there are finite covering maps 
$P_{i} \colon \widetilde{L} \to L_{i}$ for $i=0,1$. 
If $X = S^{1} \times D^{2}$, suppose that each $P_{i}$ maps a longitude of $X$ to a longitude. 
Let $G_{i}$ denote the finite subgroups of $\Isom(M)$ consisting of the deck transformations 
for the covering maps $\widetilde{L} \to L_{i}$. 
Suppose that $G$ is the subgroup of $\Isom(M)$ generated by the elements of $G_{0} \cup G_{1}$. 
Then each non-trivial element of $G$ has no fixed points in $M$. 
Consequently, there are a link $L$ in $X$ and finite covering maps $L_{i} \to L$ 
whose deck transformation groups are $G / G_{i}$. 
\end{lem}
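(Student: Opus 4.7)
The plan is to first show that each non-trivial element of $G$ acts freely on $M$, and then to use this freeness to construct the link $L$ and the covering maps $L_i \to L$ by taking quotients.

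For the freeness claim, my first step is to lift the isometries in $G$ to the ambient $X$. Every isometry $g \in \Isom(M)$ preserves meridian slopes on the cusp tori of $M$ (each generator does, and the property is closed under composition), so it extends uniquely up to isotopy to a self-homeomorphism $\bar g$ of $X$ preserving $\widetilde{L}$ setwise, and by uniqueness $g \mapsto \bar g$ is a homomorphism. For each generator $g \in G_0 \cup G_1$, the extension $\bar g$ is (after replacing by the isotopic representative supplied by Mostow rigidity) a deck transformation of a covering $X \to X$ coming from $P_i$, and hence is a pure translation of $X$: a rotation of the $S^{1}$-factor when $X = S^{1} \times D^{2}$ (the hypothesis that $P_i$ sends a longitude to a longitude rules out any twist in the $\partial D^{2}$-direction), a $T^{2}$-translation when $X = T^{2} \times I$, or a translation of $T^{3}$ when $X = T^{3}$. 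Since compositions of such translations remain translations of the same kind, $\bar g$ is a translation of $X$ for every $g \in G$, and this translation is non-trivial whenever $g$ is non-trivial (otherwise $\bar g = \mathrm{id}_X$ would force $g = \mathrm{id}_M$).

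Given this, the freeness follows from Lemmas \ref{lem:1-hyp-free} and \ref{lem:2-hyp-free} together with admissibility, which is closed under composition and hence automatic for every $g \in G$. For $X = S^{1} \times D^{2}$, $\bar g$ acts on $S^{1} \times \partial D^{2}$ by a non-trivial $S^{1}$-translation not fixing any meridian setwise, so Lemma \ref{lem:1-hyp-free} forbids fixed points of $g$ in $M$. For $X = T^{2} \times I$, $\bar g$ acts on each component of $T^{2} \times \partial I$ by a non-trivial translation, so Lemma \ref{lem:2-hyp-free} applies. For $X = T^{3}$ no boundary lemma is needed: a non-trivial translation of $T^{3}$ is automatically fixed-point-free on all of $X$, and this property descends to $M \subset X$.

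Once $G$ acts freely on $M$, the quotient construction is routine. Because non-trivial translations of $T^{3}$, of $T^{2} \times I$ in the $T^{2}$-direction, and of $S^{1} \times D^{2}$ in the $S^{1}$-direction are all fixed-point-free, $G$ acts freely on $X$ as well, and $X/G$ is again homeomorphic to the same model manifold. Setting $L = \widetilde{L}/G \subset X/G = X$, the intermediate quotient $X/G_i$ is homeomorphic to $X$ and recovers $L_i$ as $\widetilde{L}/G_i$, and the natural projection $X/G_i \to X/G$ restricts to a finite covering $L_i \to L$ of degree $[G:G_i]$. The main obstacle in the whole argument is the second half of the first step: showing that $\bar g$ is genuinely a translation for every $g \in G$ and not merely an arbitrary admissible homeomorphism. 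Uniqueness of the extension is what makes this work, since it forces $\bar g$ to coincide with the composition of the translations extending the generators; once this is secured, the boundary-torus applications of Lemmas \ref{lem:1-hyp-free} and \ref{lem:2-hyp-free}, the direct argument on $T^{3}$, and the final quotient construction are all straightforward.
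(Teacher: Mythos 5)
Your argument for $X = S^{1} \times D^{2}$ and $X = T^{2} \times I$ is essentially the paper's: the elements of $G_{0} \cup G_{1}$ act on the cusp tori coming from $\partial X$ by Euclidean translations, compositions of translations are translations, a non-trivial element of $G$ cannot act trivially on such a torus (an orientation-preserving isometry of $M$ fixing a cusp cross-section pointwise is the identity), and then Lemmas~\ref{lem:1-hyp-free} and \ref{lem:2-hyp-free} give freeness. That part is fine, and note that it lives entirely on the boundary tori of $M$, where ``translation'' has a genuine meaning via the canonical Euclidean structure on the cusps.

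The $T^{3}$ case has a real gap. You claim that for every $g \in G$ the extension $\bar g$ to $X$ ``is a translation'' because uniqueness of the extension forces it to equal the composition of the translations extending the generators, and you then conclude fixed-point-freeness on all of $X$. But the extension of $g$ to $X$ restricts to $g$ itself on $M$, and $g$ is an isometry of the \emph{hyperbolic} metric on $M$; it is certainly not a literal Euclidean translation of the standard flat $T^{3}$, only isotopic to one (the extension is canonical only up to isotopy on the filling solid tori, and even the generators are only isotopic to deck-transformation translations once you fix the hyperbolic metric on $M$). A homeomorphism of $T^{3}$ that is merely isotopic to a fixed-point-free translation can perfectly well have fixed points (the Lefschetz number is $\chi(T^{3})=0$, which forbids nothing), so ``this property descends to $M$'' does not follow. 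Since $\partial T^{3} = \emptyset$, there is also no boundary torus on which to run the translation argument as in the other two cases. The paper closes exactly this gap by extending the smooth $G$-action from $M$ to $T^{3}$ (possible because $G$ preserves meridians) and invoking the Meeks--Scott theorem to produce a $G$-invariant Euclidean metric on $T^{3}$; with respect to \emph{that} metric the elements of $G_{0} \cup G_{1}$, being isometries isotopic to the identity, really are translations, hence so is every element of $G$, and freeness follows. Your proof needs this (or an equivalent geometrization-of-orbifolds input) to handle $X = T^{3}$.
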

\begin{proof}
We first prove in the case $X = T^{2} \times I$. 
In the case $X = S^{1} \times D^{2}$, 
we can prove in the same manner using Lemma~\ref{lem:1-hyp-free}. 
Since each element of $G_{0} \cup G_{1}$ acts on a component of $T^{2} \times \partial I$ by a translation, any element of $G$ does so. 
If an element of $G$ acts trivially on a component of $T^{2} \times \partial I$, 
the element is trivial. 
Hence each non-trivial element of $G$ has no fixed points in $M$ by Lemma~\ref{lem:2-hyp-free}. 
Since the group $\Isom(M)$ is finite, the group $G$ is also finite. 
The free action of $G$ preserves each component of the boundary $T^{2} \times \partial I$ 
and maps any meridian of $\widetilde{L}$ to a meridian. 
Hence the manifold $M / G$ is the complement of a link $L$ in $T^{2} \times I$. 
Since $G_{i}$ is a subgroup of $G$, 
the covering map $M / G_{i} \to M / G$ induces a covering map $L_{i} \to L$. 

We next prove in the case $X = T^{3}$. 
The isometric action of the finite group $G$ on $M$ is extended to a smooth action of $T^{3}$. 
As in the proof of Theorem~\ref{thm:hyp-isotopy}, 
the result of Meeks and Scott \cite{MS86} implies that there is a Euclidean metric on $T^{3}$ such that the action of $G$ on $T^{3}$ is isometric. 
Since the groups $G_{0}$ and $G_{1}$ act on $T^{3}$ by translations, so does $G$. 
In particular, the action of $G$ on $T^{3}$ is free. 
Hence the manifold $M / G$ is the complement of a link $L$ in $T^{3}$. 
Since $G_{i}$ is a subgroup of $G$, 
the covering map $M / G_{i} \to M / G$ induces a covering map $L_{i} \to L$. 
\end{proof}

For finite covering maps from Seifert fibered links, 
we need the largest common quotient of them, not only the minimal motif. 
Note that the components of $T_{0}(p,q)$, $T_{2}(p,q)$, and $T_{3}(p,q,r)$ can be arbitrarily permuted by isotopy. 

\begin{lem}
\label{lem:seifert-motif}
Let $\widetilde{L}$, $L_{0}$, and $L_{1}$ be links in $X$ whose complements are $T_{0}(p,0)$, $T_{1}(0,q)$, $T_{2}(p,q)$, or $T_{3}(p,q,r)$. 
Suppose that there are finite covering maps 
$P_{i} \colon \widetilde{L} \to L_{i}$ for $i=0,1$. 
If $X = S^{1} \times D^{2}$, suppose that each $P_{i}$ maps a longitude of $X$ to a longitude. 
Let $G_{i}$ denote the deck transformation groups for the coverings $P_{i}$. 
Then there are a link $\widehat{L}$ in $X$ 
and finite covering maps 
$P'_{i} \colon \widetilde{L} \to L_{i}$ 
and $Q_{i} \colon L_{i} \to \widehat{L}$ that satisfy the following: 
\begin{enumerate}
\item The deck transformation group $G'_{i}$ of the covering map $P'_{i}$ is isomorphic to $G_{i}$. 
\item The actions of $G_{i}$ and $G'_{i}$ coincide on the boundary $\partial X$. 
\item $Q_{0} \circ P'_{0} = Q_{1} \circ P'_{1} ( = P)$. 
\item The deck transformation group $G$ for the covering $P$ is generated by the elements of $G'_{0} \cup G'_{1}$, where $G'_{0}$ and $G'_{1}$ are regarded as subgroups of $G$. 
\item If two components $K$ and $K'$ of $\widetilde{L}$ are projected to a common component of $\widehat{L}$ by $P$, then there are components $K_{0}, \dots, K_{m}$ of $\widetilde{L}$ such that $K_{j}$ and $K_{j+1}$ are projected to a common component of $L_{i}$ by $P_{i}$ for $i = 0$ or $1$. 	
\end{enumerate}
\end{lem}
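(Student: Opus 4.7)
The plan is to exploit the abelian fundamental group of $X \in \{S^{1} \times D^{2}, T^{2} \times I, T^{3}\}$. Finite coverings $\widetilde X \to X$ are classified by sublattices of $\pi_{1}(X)$, and their deck transformations act on $\widetilde X$ by translations of the $S^{1}$ or torus factor. Combined with the hypothesis that $\widetilde L, L_0, L_1$ are all of standard type $T_l(\cdots)$, the problem reduces to essentially linear algebra over the lattice $\pi_{1}(X)$.

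First I would put $\widetilde L, L_0, L_1$ in standard position via isotopies that can be taken rel $\partial X$, since the standard forms lie in the interior of $X$ and the isotopies between them need not interact with $\partial X$. Each deck group $G_i$ then becomes a finite subgroup of the translation group of $\widetilde X$ preserving $\widetilde L$; in the solid torus case the longitude-preservation hypothesis is what rules out Dehn-twist-like contributions. Since the translation groups are abelian, $G = \langle G_0', G_1' \rangle$ is itself a subgroup of this translation group, acting freely on $\widetilde X$, and $\widetilde X / G$ is again homeomorphic to $X$. Defining $\widehat L = \widetilde L / G$ with induced quotient maps $Q_i \colon L_i \to \widehat L$, conditions (3) and (4) follow directly from $G_0', G_1' \subseteq G$ together with the fact that $Q_0 \circ P_0' = Q_1 \circ P_1'$ is the composed projection to $\widetilde X / G$.

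Condition (5) then reduces to the observation that every $G$-orbit on the components of $\widetilde L$ can be traversed by successively applying elements of $G_0'$ and $G_1'$, each step identifying two components that share a common image under $P_0'$ or $P_1'$; this gives the chain $K_{0}, \dots, K_{m}$ in the statement.

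The main obstacle is condition (2): ensuring that $P_i'$ has the same boundary action as $P_i$. I expect this to follow by combining the classification of admissible self-homeomorphisms of the Seifert fibered pairs $(X, T_l(\cdots))$ with the longitude-preservation hypothesis in the solid torus case, which together rigidify the boundary behavior of the deck transformations so that the isotopy producing $P_i'$ from $P_i$ can be taken rel $\partial \widetilde X$. Once this rigidity is secured, the remaining claims of the lemma fall out from the abelian structure of the relevant translation groups.
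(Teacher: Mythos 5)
Your overall strategy matches the paper's: put $\widetilde{L}$, $L_{0}$, $L_{1}$ in standard position, replace the deck groups by translation groups $G'_{i}$ agreeing with $G_{i}$ on $\partial X$, and let $G$ be the subgroup of translations they generate; conditions (1)--(4) then do fall out of the abelian translation structure essentially as you say. But there is a genuine gap at condition (5), which is where the paper puts almost all of its effort. Your chain argument establishes the statement with the \emph{new} maps $P'_{i}$ in place of $P_{i}$ (that version is indeed immediate from (4)), whereas the lemma asserts it for the \emph{original} maps $P_{i}$. These are not interchangeable: the isotopy carrying $\widetilde{L}$ to the standard link $T_{2}(\tilde{p},\tilde{q})$ may permute its $\tilde{d}$ parallel components arbitrarily, so the partition of components into fibers of the translation action $G'_{i}$ (which is forced to be the ``evenly spaced'' partition determined by a translation of order $\tilde{d}/d_{i}$) need not coincide with the partition into fibers of the original $P_{i}$. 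The paper fixes this by a combinatorial matching step: it forms the equivalence classes on components generated by the two original fiber partitions, shows each class has cardinality a multiple of $\tilde{d}/d = \lcm(\tilde{d}/d_{0}, \tilde{d}/d_{1})$, which is the size of a $G$-orbit, and then chooses the permutation of components (i.e., the isotopy to standard position) so that every class is a union of $G$-orbits. Without this step condition (5) can simply fail, and (5) is exactly what Lemma \ref{lem:small-motif} needs to glue the deck actions consistently across adjacent JSJ pieces.

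A secondary point: you identify condition (2) as the main obstacle, but in the paper it is dispatched almost by fiat --- the boundary action of a deck transformation of a covering of $T^{2}\times I$ (or of $S^{1}\times D^{2}$ preserving longitudes) is a translation of the boundary torus, so a translation group $G'_{i}$ with the prescribed boundary behavior exists immediately. The substantive difficulty in this lemma is the component-matching for (5), not the boundary rigidity for (2).
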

\begin{proof}
We prove in the case $X = T^{2} \times I$. 
The proofs in the other cases are similar. 
The links $\widetilde{L}, L_{0}, L_{1}$ are respectively isotopic to 
$T_{2}(\tilde{p}, \tilde{q}), T_{2}(p_{0}, q_{0}), T_{2}(p_{1}, q_{1})$ 
by Proposition~\ref{prop:2-seifert}. 
Let $\tilde{d} = \gcd(\tilde{p}, \tilde{q})$, $d_{0} = \gcd(p_{0}, q_{0})$, and $d_{1} = \gcd(p_{1}, q_{1})$. 
We put $T_{2}(\tilde{p},\tilde{q})$, $T_{2}(p_{0}, q_{0})$, and $T_{2}(p_{1}, q_{1})$ 
on the standard positions of the definition in Section~\ref{section:jsj}. 
For each $i=0,1$, there is $G^{\prime}_{i}$ isomorphic to $G_{i}$ 
such that $G^{\prime}$ acts on $T^{2} \times I$ by the translations 
and the actions of $G_{i}$ and $G'_{i}$ coincide on the boundary $T^{2} \times \partial I$. 
Let $G$ be the group generated by the elements of $G'_{0} \cup G'_{1}$ as translations. 
Then $G$ is the deck transformation group for a covering map 
$P \colon (T^{2} \times I,T_{2}(\tilde{p}, \tilde{q})) \to (T^{2} \times I, T_{2}(p, q))$, 
via the covering maps $P'_{i} \colon (T^{2} \times I,T_{2}(\tilde{p}, \tilde{q})) \to (T^{2} \times I, T_{2}(p_{i}, q_{i}))$. 

We need to permute the components of $\widetilde{L}$ for Assertion (5). 
In the above case, 
the number $d = \gcd(p,q)$ of components of $\widehat{L} = T_{2}(p, q)$ satisfies $d = \gcd(d_{0}, d_{1})$. 
The orbit of each component of $\widetilde{L}$ by the action of $G$ consists of $\tilde{d} / d$ components. 
We divide the $\tilde{d}$ components of $\widetilde{L}$ maximally into the classes 
so that two components belong to a common class 
if they are projected to a common component of $L_{i}$ by $P_{i}$ for $i = 0$ or $1$. 
Two components of different classes should not be projected to a common component by $P$. 
Each class $C$ is preserved by the action of $G'_{i}$. 
Moreover, each orbit of the action of $G'_{i}$ consists of $\tilde{d} / d_{i}$ components. 
Hence the number $|C|$ of elements is a multiple of $\tilde{d} / d = \lcm(\tilde{d} / d_{0}, \tilde{d} / d_{1})$. 
We can permute the components of $\widetilde{L}$ 
so that each class is preserved by the action of $G$. 
Thus we obtain the required covering maps $P^{\prime}_{i}$. 
\end{proof}

\begin{proof}[Proof of Lemma~\ref{lem:small-motif}]
Since $L_{0}$ and $L_{1}$ are motifs of $L_{\infty}$, 
there are a link $\widetilde{L}$ in $X$ and finite covering maps $P_{i} \colon \widetilde{L} \to L_{i}$ for $i=0,1$. 
The link $\widetilde{L}$ is non-split. 
We show that there are a link $\widehat{L}$ in $X$ and finite covering maps $P'_{i} \colon \widetilde{L} \to L_{i}$ and $Q_{i} \colon L_{i} \to \widehat{L}$ that satisfy Assertions (1)--(4) in Lemma~\ref{lem:seifert-motif}. 

Consider the JSJ decomposition of $X \cut \widetilde{L}$. 
The deck transformations are restricted to the JSJ pieces. 
Moreover, they preserve the longitudes on JSJ tori that bound solid tori. 
The assertion for a hyperbolic piece follows from Lemma~\ref{lem:hyp-motif} and the Mostow rigidity theorem. 
The assertion for a Seifert fibered piece that can be re-embedded as $T_{0}(p,0)$, $T_{1}(0,q)$, $T_{2}(p,q)$, or $T_{3}(p,q,r)$ follows from Lemma~\ref{lem:seifert-motif}. 
Note that the assertion also holds in the case that the deck transformation groups act on a link component in a disjoint union of solid tori. 
By combining the actions on the JSJ pieces, 
we obtain a desired deck transformation on $X \cut \widetilde{L}$. 
Since knotted hole balls are lifted disjointly by covering maps, 
the actions are compatible on the boundary of knotted hole balls. 
If $\iota_{*} \colon H_{1}(M, \bbZ) \to H_{1}(X, \bbZ)$ is trivial for a Seifert fibered piece $M$ in $X \cut L_{i}$, then $M$ is also lifted disjointly by the covering map. 
Assertion (2) implies that the actions are compatible on a JSJ torus that is layering or bounds a solid torus. 
The compatibility for pieces adjacent to a Seifert fibered piece requires Assertion (5) in Lemma~\ref{lem:seifert-motif}. 
\end{proof}

\section*{Acknowledgements} 
The authors are grateful to Kai Ishihara, Yuya Koda, and Yuta Nozaki for their helpful discussions. 
This work was supported by the World Premier International Research Center Initiative Program, International Institute for Sustainability with Knotted Chiral Meta Matter (WPI-SKCM$^2$), MEXT, Japan, 
and JSPS Program for Forming Japan's Peak Research Universities (J-PEAKS) Grant Number JPJS00420230011. 
The first author was also supported by JSPS KAKENHI Grant-in-Aid for Early-Career Scientists Grant Number 20K14322 and Grant-in-Aid for Scientific Research (C) Grant Number 25K07005. 
The second author was also supported by JSPS KAKENHI Grant-in-Aid for Early-Career Scientists Grant Number 25K17246. 
The third author was also supported in part by National Science Foundation Grant No. DMR-1847172 and Research Corporation for the Advancement of Science Cottrell Scholar Award Grant No. CS-CSA-2020-162. 
The last author was also supported by JSPS KAKENHI Grant-in-Aid for Early-Career Scientists Grant Number 19K14530.

\bibliographystyle{plain}
\bibliography{ref-kmmy}

\end{document}